\documentclass[11pt,a4paper,reqno]{amsart}
\usepackage[applemac,utf8]{inputenc}
\usepackage[T1]{fontenc}
\usepackage{ulem}
\usepackage{amsmath,esint,color}
\usepackage{amsthm}
\usepackage{amsfonts}
\usepackage{amssymb}
\usepackage{bigints}
\usepackage{fullpage}
\usepackage[colorlinks=true, pdfstartview=FitV, linkcolor=blue, citecolor=blue, urlcolor=blue]{hyperref}

\evensidemargin 0.0in \oddsidemargin 0.0in \textwidth 6.5in
\topmargin  -0.2in \textheight  9.5in \overfullrule = 0pt

\date{}
\definecolor{sah}{rgb}{0.66,0.33, 0.04}
\definecolor{adel4}{cmyk}{1,0,0,0}
\definecolor{adel3}{rgb}{0.66,0.33, 0.04}
\definecolor{adel1}{cmyk}{0,0.20,1,0}
\definecolor{adel2}{cmyk}{0,0.40,1,0.30}
\definecolor{adel0}{rgb}{0.99,0.60, 0.30}
\definecolor{trut}{rgb}{0.99,0.80, 0.00}
\definecolor{trus}{rgb}{0.00, 0.50, 0.00}
 \definecolor{trust}{rgb}{0.99, 0.99, 0.80}
\definecolor{MaCouleur}{rgb}{0,0.9,0.3}
\newcommand{\Q}{\mathbb{Q}}
  
\newcommand{\NN}{\mathbb{N}}  
\newcommand{\RR}{\mathbb{R}}  
\newcommand{\dx}{\mathit{dx}}
\newcommand{\M}{\mathcal{M}} 
 \newcommand{\dy}{\mathit{dy}} 
\newcommand{\BMO}{\mathit{BMO}}  
   
\newcommand{\BMOF}{\mathit{BMO_F}}  
 \newcommand{\LMOF}{\mathit{LMO_F}}  
\theoremstyle{plain}
\newtheorem{definition}{Definition}
\newtheorem{theorem}{Theorem}
\newtheorem{proposition}{Proposition}
\newtheorem{lemma}{Lemma}
\newtheorem{remark}{Remark}
\newtheorem{coro}{Corollary}

\sloppy

\def\virgp{\raise 2pt\hbox{,}}

\def\Xint#1{\mathchoice
   {\XXint\displaystyle\textstyle{#1}}%
   {\XXint\textstyle\scriptstyle{#1}}%
   {\XXint\scriptstyle\scriptscriptstyle{#1}}%
   {\XXint\scriptscriptstyle\scriptscriptstyle{#1}}%
   \!\int}
\def\XXint#1#2#3{{\setbox0=\hbox{$#1{#2#3}{\int}$}
     \vcenter{\hbox{$#2#3$}}\kern-.5\wd0}}

\def\av_#1{\Xint-_{#1}}

\usepackage[textmath,displaymath,floats,graphics,delayed]{preview}
\sloppy

 \parindent 0pt
  \title[]{On the 2D Isentropic Euler System with Unbounded Initial vorticity}
\author[]{ZINEB HASSAINIA}
\address{IRMAR, Universit\'e de Rennes 1 \\ Campus de Beaulieu \\  35 042 Rennes cedex, France}
 \email{zineb.hassainia@univ-rennes1.fr}
\subjclass[2000]{76N10 ; 35Q35}
\keywords{ 2D compressible Euler equations, Incompressible limit, {\rm BMO}-type spaces}
\begin{document}
\begin{abstract}
This paper is devoted to the study of the low Mach number limit for the 2D isentropic Euler system associated to ill-prepared initial data  with slow blow up rate on  $\log\varepsilon^{-1}$. 
We  prove in particular  the strong  convergence to the  solution of the incompressible Euler system when  the  vorticity  belongs to some weighted $BMO$ spaces allowing unbounded functions. The proof is based on the extension of the result of \cite{B-K} to a compressible transport model.
\end{abstract}
\maketitle

\begin{quote}
\footnotesize\tableofcontents
\end{quote}
\section{Introduction}
\quad The equations of motion governing a perfect  compressible fluid evolving in the whole space $\RR^2$ are given by  Euler system:

\begin{equation*}
\left\{ \begin{array}{ll}
\rho(\partial_{t}v+v\cdot\nabla v)+\nabla p =0, \, t\geq 0,\, x\in\RR^2&\\
\partial_{t}\rho+\textnormal{div}\, (\rho v) =0, &\\
(v ,\rho)_{| t=0}=(v_{0},\rho_{0}).
\end{array} \right. 
\end{equation*} 
Here, the vector field $v = (v_1,v_2)$ describes  the velocity of the  fluid particles and the scalar functions $p$ and $\rho>0$ stand  for the pressure and the density, respectively. From now onwards, we shall be concerned only with the isentropic case corresponding to the law $$p=\rho^{\gamma},$$ where the parameter $\gamma>1$ is the adiabatic exponent. 

\quad Following the idea of Kawashima, Makino and Ukai \cite{M-U-K}, this system can be symmetrized by using  the sound speed $c$ defined by
$$
c=2\frac{\sqrt{\gamma}}{\gamma-1}\rho^{\frac{\gamma-1}{2}}.
$$
\quad The main scope of this paper is to deal with  the  weakly compressible fluid and particularly we intend  to  get  a lower bound for the lifespan and justify the convergence towards the incompressible system. But before reviewing the state of the art and  giving a precise statement of our main result  we shall briefly describe the way how to get formally  the weakly compressible fluid. In broad terms, the basic idea consists in writing the foregoing system around the equilibrium \mbox{state 
  $(0,c_0)$:}  let $\varepsilon>0$  be a small parameter called the Mach number and set 
$$
v(t,x)=\bar{\gamma}c_0\varepsilon v_\varepsilon(\varepsilon \bar{\gamma}c_0t,x)\quad \textnormal{and}\quad c(t,x)=c_0+\bar{\gamma}c_0\varepsilon c_\varepsilon(\varepsilon \bar{\gamma}c_0t,x)\quad\textnormal{with}\quad \bar{\gamma}=\frac{\gamma-1}{2}.
$$
Then 
the resulting system will be the following
\begin{equation}\label{EC}
\left\{ \begin{array}{ll}
\partial_{t}v_\varepsilon+v_\varepsilon.\nabla v_\varepsilon+\frac{1}{\varepsilon}\nabla c_\varepsilon+\bar{\gamma}c_\varepsilon\nabla c_\varepsilon =0, &\\
\partial_{t}c_\varepsilon+v_\varepsilon\cdot\nabla c_\varepsilon+\frac{1}{\varepsilon}\textnormal{div}\, v_\varepsilon+\bar{\gamma}c_\varepsilon\textnormal{div}\, v_\varepsilon =0, &\\
(v_\varepsilon ,c_\varepsilon)_{| t=0}=(v_{0,\varepsilon},c_{0,\varepsilon}).
\end{array} \right. \tag{E.C}
\end{equation}
As we can easily observe this system contains singular terms in $\varepsilon$  that might affect dramatically the dynamics when the Mach number is  close to zero. For more details 
 about the derivation of the above model we invite the interested reader to consult the papers \cite{H-S, K-M, D-H} and the references therein. \\

\quad From mathematical point of view this system has been intensively investigated  in the few  last decades. One of the basic problems is  the construction of the solutions $(v_\varepsilon,c_\varepsilon)$ in suitable function spaces with a non degenerate time existence and  most importantly  the asymptotic  behavior for small  Mach number. Formally, one expects the velocity $v_\varepsilon$ to converge to $v$ the solution of the incompressible Euler system given by
\begin{equation}
\label{EI}
\left\{ \begin{array}{ll}
\partial_{t}v+v\cdot\nabla v+\nabla p =0, &\\
\textnormal{div}\,  v =0, &\\
v _{| t=0}=v_{0}.
\end{array} \right.\tag{E.I}
\end{equation} 
\quad As a matter of fact, the singular parts are antisymmetric and do not contribute in the energy estimates built over Sobolev spaces $H^s$. Accordingly, a uniform time existence can be shown by using just the theory of hyperbolic systems, see \cite{K-M2}.  However  it is by no means obvious  that the constructed solutions will converge to the expected incompressible Euler solution and the problem  can be highly non trivial when it is coupled with the geometry of the domain, a fact that we ignore here.  In most of the papers dealing with this recurrent subject there are essentially two   kinds of hypothesis on the initial data: the first  one concerns the  well-prepared case where the initial data are assumed  to be slightly compressible meaning  that  $($div$\, v_{0,\varepsilon},\nabla c_{0,\varepsilon})=O(\varepsilon)$ for $\varepsilon$ close to zero. In this context it can be proved that  the time derivative of the solutions is uniformly bounded and therefore the justification of the incompressible limit follows from  Aubin-Lions compactness lemma. For a complete discussion we refer the reader to the papers of Klainerman and Majda \cite{K-M2,K-M}.
The second class of initial data is the ill-prepared case where  the family $(v_{0,\varepsilon}, c_{0,\varepsilon})_{\varepsilon}$ is assumed to be bounded in Sobolev spaces $H^s$ with $s>2$ and the incompressible parts of $(v_{0,\varepsilon})_\varepsilon$ converge strongly to some divergence-free vector field $v_0$ in $L^2$.
In this framework, the main difficulty  that one has to face, as regards the incompressible limit, is the propagation of the time derivative $\partial_t v_\varepsilon$ with the speed $\varepsilon^{-1}$, a phenomenon which does not occur in the case of the well-prepared data. To deal with this trouble,  Ukai used in  \cite{U} the dispersive effects generated by the acoustic waves in order to prove that the compressible part of the velocity and the acoustic term vanish when $\varepsilon$ goes to zero. Similar studies but  in more complex situations and for various models were accomplished later \mbox{in  }different works and for the convenient of the reader  we quote here a short list of \mbox{references
\cite{az,A,Ga, H,H-S, L,L-M,M-S}.}

\quad Regarding the lifespan of these solutions, it is well-known that in contrast to the incompressible case where the classical solutions are global in dimension two, the compressible Euler system (\ref{EC}) may develop singularities in finite time for some smooth initial data. This was  shown in space dimension two by Rammaha \cite{R}, and by Sideris \cite{S} for dimension three. It seems that in dimension two we can generically get  a lower bound for the lifespan $T_\varepsilon$ that goes to infinity for small $\varepsilon$. More precisely,  when the initial data are bounded  in $H^s$ with $s>2$ then  by taking benefit of the vorticity structure coupled with Strichartz estimates we get,
$$
T_\varepsilon  \ge C\log\log \varepsilon^{-1}.
$$
 Besides, we can get  precise information on the lifespans when the initial data enjoy some   specific structures. In fact, Alinhac \cite{A93} showed that in two-dimensional space and for axisymmetric data the lifespan  is equivalent to $\varepsilon^{-1}$. Also, for the three-dimensional system, Sideris \cite{S2} proved the almost global existence of the solution for potential flows. In other words, it was shown that the lifespan $T_\varepsilon$ is bounded below  by $\exp(c/\varepsilon)$. To end this short discussion we mention that  global existence results  were obtained in \cite{G0, S0} for some restrictive initial data.
 
\quad Recently the incompressible limit to \eqref{EC}  for ill-prepared initial data lying to the  critical  Besov space $B^2_{2,1}$  was carried out  in \cite{H-S}. It was also shown that the  strong convergence occurs in  the space of the initial data. The same program was equally  accomplished in dimension  three in \cite{H} for   the axisymmetric initial data. The fact that the regularity is optimal for the incompressible system will contribute with much more technical difficulties and unfortunately  the perturbation theory cannot be easily adapted. In these studies, the  geometry  of the vorticity is of crucial importance. 
 
\quad In  the contributions cited before, the velocity should be in the Lipschitz class uniformly with respect to $\varepsilon$. This constraint was slightly relaxed  in  \cite{D-H} by allowing the initial  data  to be so ill-prepared in order to permit Yudovich solutions for the incompressible system. Recall that these latter solutions are constructed  globally in time for \eqref{EI}  when the initial    vorticity $\omega_0$ belongs \mbox{to  $L^1\cap L^\infty$,} see  \cite{Y}.  In the incompressible framework the vorticity $\omega$, defined for  a vector field $v = (v_1, v_2)$ by $\omega=\partial_1 v_2-\partial_2 v_1,$  is advected by the flow,
\begin{equation}\label{vo}
\partial_t \omega+v\cdot \nabla\omega=0,\quad \Delta v=\nabla^\perp\omega.
\end{equation}

\quad Working in larger spaces than the Yudovich's one for the system \eqref{vo} and peculiarly  with unbounded vorticity, possibly without uniqueness,  is not in general an easy task and often leads  to more technical complications. Nevertheless, in the last decade   slight  progress were done  and we shall here comment only  some of them which fit with the scope of  this paper.  For a complete list of references we invite the reader to check the papers   \cite{D, D-M, Y2}.  One of the basic result in this subject is due to  Vishik in \cite{V} who gave various results when the vorticity belongs to the class $B_\Gamma$: a kind of functional space  characterized by the slow growth of the partial sum built over  the dyadic Fourier blocks.  The results of Vishik which cover global and local existence with or without uniqueness depending on some analytic properties of $\Gamma$ suffer from one inconvenient: the persistence regularity is not proved and an instantaneous  loss of regularity may happen.  

\quad Recently,  Bernicot and Keraani    proved in \cite{B-K} the global existence and uniqueness without any loss of regularity for the  incompressible Euler system when the initial vorticity is taken in a weighted $BMO$ space called  $LBMO$ and denotes the set of  functions with {\it $\log$-bounded mean oscillations}. This space is strictly larger than $L^\infty$ and smaller than the usual $BMO$ space.

\quad  The main task of this paper is  to conduct  the incompressible limit study for \eqref{EC}  when the limiting system \eqref{EI} is posed for initial data lying in the $LBMO$ space. As we shall discuss  later we will be also able   to generalize the result of \cite{B-K} for more general spaces. To give a clear statement we need to introduce the $LBMO$ space and a precise discussion will be found in the next section. First, take $f : \RR^2\to\RR$ be a locally integrable function.
 We say that $f$ belongs to $BMO$ space if
$$
 \Vert f\Vert_{\BMO}\triangleq\sup_{B\, ball}\fint_B \Big\vert f-\fint_B  f \Big\vert.
 $$
Second,   we say that $f$ belongs to the space   $LBMO$  if  
$$
\Vert f \Vert_{\BMOF}\triangleq\Vert f\Vert_{\BMO}+\sup_{2B_{2}\subset B_{1}}\dfrac{\big\vert\fint_{B_{2}}f-\fint_{B_{1}}f\big\vert}{\ln\Big|\dfrac{\ln r_{2}}{\ln r_{1}}\Big|}<+\infty,
$$
 where the supremum is taken over all the  pairs of balls $B_{2}=B(x_{2},r_{2})$ and $ B_{1}=B(x_{1},r_{1})$ in $\RR^2$ with $0<r_1\leq \frac12$.  We have used the notation
$\fint_B f$ to refer to the average $\frac{1}{\vert B\vert}\int_B f(x)\dx$.

Next we shall  state our main result in the special case of $LBMO$ space and whose extension will be given in  \mbox{Theorem \ref{th4}.} 
 \begin{theorem}\label{thi}
Let  $s,\alpha\in]0,1[$ and   $ p\in]1,2[$. Consider a family of initial data $(v_{0,\varepsilon},c_{0,\varepsilon})_{0<\varepsilon<1}$ such that there exists a constant $C>0$  which does not depend on $\varepsilon$ and verifying 
$$
  \Vert (v_{0,\varepsilon},c_{0,\varepsilon})\Vert_{H^{s+2}} \leq C (\log\varepsilon^{-1})^{\alpha},
  $$
 $$
 \Vert \omega_{0,\varepsilon}\Vert_{L^p\cap LBMO}\leq C.
 $$
Then, the system (\ref{EC}) admits a unique solution $(v_\varepsilon,c_\varepsilon)\in C([0,T_\varepsilon[; H^{s+2})$ with the following properties:
\begin{enumerate}
\item
The lifespan $T_\varepsilon$ of the solution satisfies the lower bound:
$$
 T_\varepsilon \geqslant \log \log \log \varepsilon^{-1}\triangleq \tilde{T}_\varepsilon,
$$
and for all $t\leq \tilde{T}_\varepsilon$ we have
\begin{eqnarray} \label{ge}
\Vert \omega_\varepsilon(t)\Vert_{LBMO\cap L^p} & \leq & C_0e^{C_0 t}.
\end{eqnarray}
  Moreover, the compressible and acoustic parts of the solutions converge to zero: 
    $$
  \lim_{\varepsilon\rightarrow 0}\Vert(\textnormal{div}\, v_\varepsilon, \nabla c_\varepsilon)\Vert_{L^1_{\tilde{T}_\varepsilon}L^\infty}=0.
   $$
\item Assume in addition that $\lim_{\varepsilon\rightarrow 0}\Vert\omega_{0,\varepsilon} -\omega_0\Vert_{L^p}=0$, for some vorticity  $\omega_0\in LBMO\cap L^p$. associated to a divergence-free vector field $v_0$. Then the vortices $(\omega_{\varepsilon})_\varepsilon$ converge strongly to the weak solution  $\omega$ of   (\ref{vo}) associated to the initial data $\omega_0$:  for all $t\in \RR_+$ we have
\begin{equation}\label{lv}
\lim_{\varepsilon\rightarrow 0}\Vert\omega_{\varepsilon}(t) -\omega(t)\Vert_{L^q}=0,\quad \forall q\in[p,+\infty[,
\end{equation}
and
\begin{eqnarray} \label{Eul45}
\Vert \omega(t)\Vert_{LBMO\cap L^p} & \leq & C_0e^{C_0t}.
\end{eqnarray}
The constant $C_0$ depends only on the size of the initial data and does not depend on $\varepsilon.$
\end{enumerate}
 \end{theorem}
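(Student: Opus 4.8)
The plan is to run a continuation argument in $H^{s+2}$ driven by an $\LBMO\cap L^p$ estimate for the vorticity, and to extract the smallness of the compressible and acoustic parts from Strichartz estimates for the acoustic subsystem; the interplay of these two ingredients is what fixes the triple logarithmic lifespan. First I would symmetrize \eqref{EC} as in \cite{M-U-K} and invoke the classical theory of symmetric hyperbolic systems to get, for each fixed $\varepsilon$, a local solution $(v_\varepsilon,c_\varepsilon)\in C([0,T_\varepsilon^\ast[;H^{s+2})$ together with the blow-up criterion that the solution extends as long as $\int_0^t\Norm{\nabla v_\varepsilon}{L^\infty}$ remains finite. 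Since the singular terms $\tfrac1\varepsilon\nabla c_\varepsilon$ and $\bar\gamma c_\varepsilon\nabla c_\varepsilon=\tfrac{\bar\gamma}{2}\nabla c_\varepsilon^2$ are both gradients, they are antisymmetric in the $H^{s+2}$ energy balance and drop out, so
\[
\tfrac{d}{dt}\Norm{(v_\varepsilon,c_\varepsilon)}{H^{s+2}}\lesssim\Norm{\nabla v_\varepsilon}{L^\infty}\,\Norm{(v_\varepsilon,c_\varepsilon)}{H^{s+2}}.
\]
Taking the two-dimensional curl of the momentum equation annihilates both gradient terms and leaves the compressible transport model
\[
\partial_t\omega_\varepsilon+v_\varepsilon\cdot\nabla\omega_\varepsilon=-\,\omega_\varepsilon\,\dv v_\varepsilon,
\]
which is precisely the equation to which the results of \cite{B-K} must be extended.

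The core of the argument is the propagation of the $\LBMO\cap L^p$ regularity of $\omega_\varepsilon$. I would integrate the transport model along the flow $\psi_\varepsilon$ of $v_\varepsilon$, obtaining
\[
\omega_\varepsilon(t,\psi_\varepsilon(t,x))=\omega_{0,\varepsilon}(x)\,\exp\!\Big(-\!\int_0^t(\dv v_\varepsilon)(\tau,\psi_\varepsilon(\tau,x))\,d\tau\Big),
\]
from which the $L^p$ bound follows at once once $\Norm{\dv v_\varepsilon}{L^1_tL^\infty}$ is under control. For the $\LBMO$ part I would adapt the stability estimates of \cite{B-K}: their analysis shows that the flow of a vector field whose vorticity lies in $\LBMO$ preserves that space with at most exponentially growing norm, the key point being a quantitative control of how the averages over nested balls $2B_2\subset B_1$ entering $\Norm{\cdot}{\BMOF}$ are distorted by $\psi_\varepsilon$. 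The genuinely new difficulty is the multiplicative Jacobian factor above: I would treat it as a perturbation, estimating its $\LBMO$ norm in terms of $\Norm{\dv v_\varepsilon}{L^1_tL^\infty}$ and the log-weighted oscillation of the flow, so that the compressible correction is absorbed once $\Norm{\dv v_\varepsilon}{L^1_tL^\infty}$ is known to be small. This is the step I expect to be the main obstacle, since the whole $\log$-bounded-mean-oscillation machinery has to be re-run in the non-conservative setting and coupled to the smallness of the compressible part; the outcome is the target bound $\Norm{\omega_\varepsilon(t)}{\LBMO\cap L^p}\le C_0e^{C_0t}$.

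To furnish the required smallness I would isolate the acoustic subsystem satisfied by $(\dv v_\varepsilon,c_\varepsilon)$, a wave equation with propagation speed $\varepsilon^{-1}$ and sources controlled by the $H^{s+2}$ norm, and apply dispersive/Strichartz estimates to bound $\Norm{(\dv v_\varepsilon,\nabla c_\varepsilon)}{L^1_tL^\infty}$ by $\varepsilon^{\delta}$ (for some $\delta>0$) times a polynomial in $\Norm{(v_\varepsilon,c_\varepsilon)}{H^{s+2}}$. In parallel I would use the logarithmic interpolation inequality relating $\Norm{\nabla v_\varepsilon}{L^\infty}$ to $\Norm{\omega_\varepsilon}{\LBMO}$, $\log\big(e+\Norm{v_\varepsilon}{H^{s+2}}\big)$ and the small compressible contribution. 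Feeding $\Norm{\omega_\varepsilon}{\LBMO}\le C_0e^{C_0t}$ into the energy inequality and running a Gronwall argument on $\log\Norm{v_\varepsilon}{H^{s+2}}$ yields a double-exponential in time bound of the form $\log\Norm{v_\varepsilon(t)}{H^{s+2}}\lesssim(\log\varepsilon^{-1})$-independent$\cdot(\log\log\varepsilon^{-1})\,\exp(e^{C_0t})$, starting from the datum of size $(\log\varepsilon^{-1})^{\alpha}$. Requiring the Strichartz gain $\varepsilon^{\delta}$ to dominate the resulting power of $\Norm{v_\varepsilon}{H^{s+2}}$ amounts to $(\log\log\varepsilon^{-1})\exp(e^{C_0t})\lesssim\log\varepsilon^{-1}$, which, after taking logarithms twice, caps the time at $T\sim\log\log\log\varepsilon^{-1}=\tilde T_\varepsilon$. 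A standard continuity argument then closes the bootstrap: on $[0,\tilde T_\varepsilon]$ the bounds of the energy estimate, \eqref{ge}, and $\Norm{(\dv v_\varepsilon,\nabla c_\varepsilon)}{L^1_{\tilde T_\varepsilon}L^\infty}\to0$ hold simultaneously, establishing item (i).

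For the incompressible limit of item (ii), I would use the uniform $\LBMO\cap L^p$ bound together with Calder\'on--Zygmund estimates and a Rellich-type compactness argument to extract a limiting velocity field, the vanishing of $\dv v_\varepsilon$ forcing the limit to be divergence free; passing to the limit in the compressible transport model shows that the limit vorticity solves \eqref{vo}. Uniqueness from \cite{B-K} identifies this limit with the weak solution $\omega$ issued from $\omega_0$ and upgrades the convergence from subsequences to the full family. The strong convergence \eqref{lv} in $L^q$ for $q\in[p,+\infty[$ then follows by interpolating the $L^p$ convergence of $\omega_{0,\varepsilon}\to\omega_0$ against the uniform higher integrability furnished by the $\BMO$ bound, while \eqref{Eul45} is obtained by passing to the limit in \eqref{ge} (equivalently, by running the same $\LBMO\cap L^p$ estimate directly on the incompressible system \eqref{EI}, where the source term $\omega\,\dv v$ is absent).
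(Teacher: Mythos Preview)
Your strategy for item (i) is essentially the paper's: energy estimate in $H^{s+2}$, vorticity equation as compressible transport, extension of the Bernicot--Keraani $\LBMO$ propagation to that model, Strichartz estimates for the acoustic part, and a bootstrap closing at the triple-logarithmic time. Two small corrections: the $H^{s+2}$ energy and the continuation criterion involve $\Norm{\nabla c_\varepsilon}{L^\infty}$ as well as $\Norm{\nabla v_\varepsilon}{L^\infty}$ (the term $\bar\gamma c_\varepsilon\,\dv v_\varepsilon$ in the $c_\varepsilon$-equation is not a gradient), and the paper organises the closing step via the $LL$-norm of $v_\varepsilon$ and an Osgood-type inequality rather than a pure Gronwall on $\log\Norm{v_\varepsilon}{H^{s+2}}$; but the mechanism and the outcome are the same.

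For item (ii), however, your proposed ``Rellich-type compactness'' on the velocities is precisely the step that fails for ill-prepared data and is the reason the paper proceeds differently. Aubin--Lions/Rellich requires a uniform bound on $\partial_t v_\varepsilon$ in some negative space, but the equation contains the singular term $\tfrac{1}{\varepsilon}\nabla c_\varepsilon$, which blows up as $\varepsilon\to0$; the $H^{s+2}$ norms themselves are not uniformly bounded either. One can try to rescue this by applying compactness only to $\mathbb{P}v_\varepsilon$ (whose time derivative has no singular part) and killing $\mathbb{Q}v_\varepsilon$ via Strichartz, but you do not indicate this splitting, and even then ``passing to the limit in the compressible transport model'' is delicate because you only get weak convergence of $\omega_\varepsilon$.

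The paper avoids all of this by working at the level of the flow maps. From the uniform $LL$-bound on $v_\varepsilon$ one gets equicontinuity of $\psi_\varepsilon^{\pm1}$ on compacts; Arzela--Ascoli then produces a limit homeomorphism $\psi$, which is shown to be measure preserving because $\Norm{\dv v_\varepsilon}{L^1_tL^\infty}\to0$. The candidate limit is defined explicitly as $\omega(t)=\omega_0\circ\psi^{-1}(t)$, and the strong $L^q$ convergence $\omega_\varepsilon(t)\to\omega(t)$ is obtained directly from the representation formula $\omega_\varepsilon(t)=\omega_{0,\varepsilon}\circ\psi_\varepsilon^{-1}(t)\,e^{-\int_0^t(\dv v_\varepsilon)\circ\psi_\varepsilon}$ together with the uniform convergence of $\psi_\varepsilon^{-1}$ and the hypothesis $\omega_{0,\varepsilon}\to\omega_0$ in $L^p$; interpolation with the uniform $\BMO$ bound then upgrades $L^p$ to all $L^q$, $q\in[p,\infty[$. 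That $\psi$ is the flow of $v=K\ast\omega$ follows by passing to the limit in the integral flow equation, and uniqueness in $LL$ identifies the limit and removes the need for subsequences. Your interpolation remark at the very end is correct once the $L^p$ convergence of $\omega_\varepsilon(t)$ (not just of the initial data) is in hand, but it is exactly this step that your compactness argument does not deliver.
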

Before giving a brief account of the proof, we shall summarize some comments in order to clarify some points in the theorem.
\begin{remark}\label{rqq2}
\begin{enumerate}
 \item Theorem $\ref{thi}$  recovers  the result  stated in \cite{B-K} for $LBMO\cap L^p$ space  according to the estimate $\eqref{Eul45}.$
 \item The estimate $ (\ref{lv})$ can be translated to the velocity according to the  Biot-Savart  law $(\ref{b-s})$  as follows
$$
\lim_{\varepsilon\rightarrow 0}\Vert \mathbb{P}v_\varepsilon -v\Vert_{L^\infty_t W^{1,r}\cap L^\infty}= 0\quad \forall r\in\Big[\frac{2p}{2-p},+\infty\Big[.
 $$
where $\mathbb{P}v_\varepsilon=v_\varepsilon-\nabla \Delta^{-1}\textnormal{div}\, v_\varepsilon$ denotes  the Leray's projector over solenoidal vector fields.

\item 
 We can generically construct a family  $(v_{0,\varepsilon})$ satisfying the assumptions of  Theorem $\ref{thi}.$ In fact, let $v_0$ be  a  divergence-free vector field with $\omega_0=\textnormal{curl}\, v_0\in L^p\cap LBMO$. Take two functions   $\chi, \rho  \in C_0^\infty(\RR^2)$  with  
$$\rho\geq 0\quad\hbox{and} \quad \int_{\RR^2}\rho(x) dx=1.
$$
Denote by $(\rho_k)_{k\in\NN^*}$ the usual mollifiers:
$$
\rho_k(x)\triangleq k^2\rho(k x).$$
For $R>0$, we set 
$$
v_{0,\varepsilon}=\rho_k* \Big(\chi\big(\frac{\cdot}{R}\big)v_0\Big).
$$
By the convolution laws we obtain 
\begin{eqnarray*}
\Vert v_{0,\varepsilon}\Vert_{H^{s+2}}&\leq & Ck^{s+2}R\Vert v_0\Vert_{L^\infty}.
\end{eqnarray*}
We choose carefully $k$ and $R$ with slight growth with respect to $\varepsilon$   in order to get
$$
k^{s+2}R\leq C_0 (\ln\varepsilon^{-1})^\alpha.
$$
The uniform boundedness of $(\omega_{0,\varepsilon})$ in the space $LBMO$ is more subtle and  will be the object of  Proposition $\ref{proptop}$ and Proposition $\ref{proposi2}$ in  the next section.
\end{enumerate}
\end{remark}

Let us now outline the basic ideas for the proof of  Theorem \ref{thi}. It is founded on  two main  ingredients: the first one which is the most relevant and has an interest in itself concerns the persistence of the regularity $LBMO$  for a compressible  transport model governing the vorticity, 
$$
\partial_t \omega_{\varepsilon}+ v_{\varepsilon}\cdot \nabla \omega_{\varepsilon}+\omega_{\varepsilon}\textnormal{div}\, v_{\varepsilon}=0.
$$
In the incompressible case  \eqref{vo}, Bernicot and Keraani have shown recently in  \cite{B-K} the following estimate 
\begin{eqnarray}\label{inc76}
\Vert \omega(t)\Vert_{LBMO\cap L^{p}}&\leq & C \Vert \omega_{0}\Vert_{LBMO\cap L^{p}}\Big(1+ \int_0^t\Vert v(\tau)\Vert_{LL}d\tau\Big).
\end{eqnarray}
Where $LL$ refers to the norm associated to  the log-Lipshitz space.

Our goal consists in extending this result to the compressible model cited before. To do so, we shall proceed in the spirit of the work  \cite{B-K} by following the dynamics of the oscillations and especially understand the interaction between them and how the global mass is distributed. 
However,   the lack of the incompressibility of the velocity and the quadratic structure of the nonlinearity $\omega_{\varepsilon}\textnormal{div}\, v_{\varepsilon}$  will bring more technical difficulties that we should  carefully analyze.  Our result whose extension will be given later  in Theorem \ref{th2} reads as follows,\begin{eqnarray*}
\Vert \omega_\varepsilon(t)\Vert_{LBMO\cap L^p}&\leq &  C\Vert \omega_{0,\varepsilon}\Vert_{LBMO\cap L^p}\Big(1+\int_0^t\Vert v_\varepsilon(\tau)\Vert_{LL}d\tau\Big)\\ &\times &\Big(1+\Vert\textnormal{div}\, v_\varepsilon\Vert_{L^1_tC^s}\int_0^t\Vert v_\varepsilon(\tau)\Vert_{LL}d\tau\Big)e^{C\Vert \textnormal{div}\, v_\varepsilon\Vert_{L^1_tL^{\infty}}}.
\end{eqnarray*} 
From this estimate  the result \eqref{inc76} follows easily by taking $\textnormal{div}\, v_\varepsilon=0.$ Now to   prove this result we shall first filtrate the compressible part and then reduce the problem to the establishment of  a  logarithmic estimate for the composition in  the space $LBMO$ but  with a flow which does not  necessarily preserve  the Lebesgue measure. For this part we follow the ideas of \cite{B-K}. Once this logarithmic estimate is proven we should come back to the real solution and thus we are led  to establish some law products invoking  some weighted {\it LMO} spaces  acting as multipliers of $LBMO$ space. 

\quad The second ingredient of the proof of Theorem \ref{thi} is the use of the  Strichartz estimates which are    an  efficient tool to deal with the so ill-prepared initial data.  As it has  already been mentioned, this fact was  used in \cite{D-H}  for  Yudovich solutions and here we follow the same strategy but with slight modifications for the strong convergence. This is done directly by manipulating the vorticity equation. 

\quad
The remainder of this paper is organized as follows. In the next section we recall basic results about  Littlewood-Paley operators, Besov spaces and gather some preliminary estimates.  We shall also  introduce some functional spaces and prove some of their basic properties. In \mbox{Section 3} we shall examine the regularity of the flow map and establish a logarithmic estimate for the compressible transport model. Section 4 is devoted to some classical  energy estimate for the system (\ref{EC}) and  the corresponding  Strichartz estimates.
In the last section, we generalize the result of \mbox{Theorem \ref{thi}} and  give the proofs. We close this paper with an appendix covering the proof of some technical lemmas.

\section{Functional tool box}
In this  section, we shall recall the definition of the frequency localization operators, some of their elementary properties and the Besov spaces. We will also introduce some function spaces and discuss  few basic results that will be used later.\\
First of all, we  fix some notations that will be intensively used in this paper.
\begin{enumerate}
\item[•] In what follows, $C$ stands for some real positive constant which may be different in each occurrence and $C_0$ a constant which depends on the initial data.
\item[•]For any  $X$ and $Y,$ the notation $X\lesssim Y$ means that there exists a positive universal constant $C$  such that $X \leq CY$.
\item[•] For a ball $B$ and $\lambda>0$, $\lambda B$ denotes the ball that is concentric with $B$ and whose radius is $\lambda$ times the radius of $B$.
\item[•] We will denote the mean value of $f$ over the ball $B$ by
$$
\fint_B f \triangleq\frac{1}{\vert B\vert}\int_B f(x)\dx.
$$
\item[•] For $p \in [1,\infty]$, the notation $L_T^p X$ stands for the set of measurable functions $f:[0,T]\to X$ such that $t \longmapsto \Vert f(t)\Vert_{X}$ belongs to $L^p ([0,T] )$.
\end{enumerate}
\subsection{Littelwood-Paley theory} Let us recall briefly the classical dyadic partition of the unity, for a proof see for instance \cite{C} : there exists two positive radial functions $\chi\in\mathcal{D}(\RR^2)$ and $\varphi\in\mathcal{D}(\RR^2\backslash\{0\})$ such that
\begin{equation*}\label{3}
\forall\xi\in\RR^2,\quad \chi(\xi)+\sum_{q\ge0}\varphi(2^{-q}\xi)=1;
\end{equation*}
\begin{equation*}\label{4}
\forall\xi\in\RR^2\backslash\{0\},\quad \sum_{q\in\mathbf{Z}}\varphi(2^{-q}\xi)=1;
\end{equation*}
\begin{equation*}\label{6}
\vert j-q\vert\ge 2\Rightarrow \quad \textnormal{Supp}\ \varphi(2^{-j}\cdot)\cap \textnormal{Supp}\ \varphi(2^{-q}\cdot)=\varnothing;
\end{equation*}
\begin{equation*}\label{7}
q\ge1\Rightarrow\quad \textnormal{Supp}\ \chi\cap\textnormal{Supp}\ \varphi(2^{-q}\cdot)=\varnothing.
\end{equation*}
For every $u \in\mathcal{ S}'(\RR^2)$ one defines the non homogeneous Littlewood-Paley operators by,
$$
\Delta_{-1}v=\mathcal{F}^{-1}\big(\chi \hat{v}\big),\quad \forall{q}\in \NN \quad \Delta_{q} v=\mathcal{F}^{-1}\big(\varphi(2^{-q}\cdot) \hat{v}\big) \quad \textnormal{and}\quad S_{q}v=\sum_{-1\le j\le q-1}\Delta_{j}v.
$$
Symilarly, we define the homogeneous operators by
$$
\forall{q}\in \mathbb{Z}\quad \dot{\Delta}_{q} v=\mathcal{F}^{-1}\big(\varphi(2^{-q}\cdot) \hat{v}\big)\quad\textnormal{and}\quad \dot{S}_{q}v=\sum_{-\infty\le j\le q-1}\dot{\Delta}_{j}v.
$$
We notice that these operators map continuously  $L^p$ to itself uniformly with respect to $q$ and $p$. Furthermore, one can easily check that for every tempered distribution $v$, we have
	$$
	v=\sum_{q\geq -1}\Delta_qv,
	$$
	and for all $v\in\mathcal{ S}'(\RR^2)/\{\mathcal{P}[\RR^2]\}$ 
	$$
	v=\sum_{q\in\mathbb{Z}}\dot{\Delta}_qv,
	$$
	where $\mathcal{P}[\RR^2]$ is the space of polynomials.\\
	The following lemma (referred in what follows as Bernstein inequalities) describes how the derivatives act on spectrally localized functions.
\begin{lemma}\label{br}
There exists a constant $C > 0$ such that for all $q \in N, k \in \NN, 1\leq a\leq b\leq\infty$ and for every tempered
distribution $u$ we have
$$
\sup_{\vert\alpha\vert\leq k}\Vert\partial^\alpha S_q u\Vert_{L^b}\leq C^k2^{q(k+2(\frac{1}{a}-\frac{1}{b}))}\Vert S_qu\Vert_{L^a},
$$
$$
C^{-k}2^{qk}\Vert \dot{\Delta}_qu\Vert_{L^b}\leq\sup_{\vert\alpha\vert= k}\Vert\partial^\alpha \dot{\Delta}_qu\Vert_{L^b}\leq C^k2^{qk}\Vert \dot{\Delta}_qu\Vert_{L^b}.
$$
\end{lemma}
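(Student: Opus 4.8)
The plan is to realize each spectrally truncated operator as a convolution against a suitably rescaled, but otherwise fixed, Schwartz kernel, and then to read off every bound from Young's convolution inequality together with an elementary scaling computation. The only genuinely structural input is the location of the Fourier support: a ball $\{|\xi|\le C2^q\}$ for $S_qu$ and an annulus $\{c2^q\le|\xi|\le C2^q\}$ for $\dot\Delta_qu$.

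For the first estimate I would fix a function $\tilde\chi\in\mathcal D(\RR^2)$ equal to $1$ on a fixed ball containing $2^{-q}\,\mathrm{Supp}\,\widehat{S_qu}$ for every $q$ (such a ball exists since $\mathrm{Supp}\,\widehat{S_qu}\subset\{|\xi|\le C2^q\}$), so that $\widehat{S_qu}=\tilde\chi(2^{-q}\cdot)\widehat{S_qu}$. Then for a multi-index $\alpha$ one has $\widehat{\partial^\alpha S_qu}(\xi)=(i\xi)^\alpha\tilde\chi(2^{-q}\xi)\widehat{S_qu}(\xi)=2^{q|\alpha|}g_\alpha(2^{-q}\xi)\widehat{S_qu}(\xi)$ with $g_\alpha(\xi):=(i\xi)^\alpha\tilde\chi(\xi)\in\mathcal D(\RR^2)$. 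Writing $h_\alpha:=\mathcal F^{-1}g_\alpha$, a Schwartz function, this reads $\partial^\alpha S_qu=2^{q|\alpha|}\,2^{2q}h_\alpha(2^q\cdot)\ast S_qu$. Young's inequality with $\tfrac1b+1=\tfrac1r+\tfrac1a$ gives $\Vert\partial^\alpha S_qu\Vert_{L^b}\le 2^{q|\alpha|}\Vert 2^{2q}h_\alpha(2^q\cdot)\Vert_{L^r}\Vert S_qu\Vert_{L^a}$, and the scaling identity $\Vert 2^{2q}h_\alpha(2^q\cdot)\Vert_{L^r}=2^{2q(1-1/r)}\Vert h_\alpha\Vert_{L^r}=2^{2q(1/a-1/b)}\Vert h_\alpha\Vert_{L^r}$ produces exactly the announced power $2^{q(k+2(1/a-1/b))}$ after taking $|\alpha|\le k$ (the $2$ being the dimension).

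The upper bound in the second estimate is obtained identically, now with $\dot\Delta_q$ in place of $S_q$ and with $a=b$ (hence $r=1$), so that the dimensional factor disappears and only $2^{qk}$ survives. The delicate half is the lower bound, and this is where the annular support is essential: I would exploit that on $\mathrm{Supp}\,\varphi$ the symbol $|\xi|^{2k}$ is bounded away from $0$. Using the multinomial identity $|\xi|^{2k}=\sum_{|\alpha|=k}\binom{k}{\alpha}(i\xi)^\alpha(-i\xi)^\alpha$ and a cutoff $\tilde\varphi\in\mathcal D(\RR^2)$ equal to $1$ on $\mathrm{Supp}\,\varphi$, one may set $g_\alpha(\xi):=\binom{k}{\alpha}\tilde\varphi(\xi)(-i\xi)^\alpha|\xi|^{-2k}\in\mathcal D(\RR^2)$ and obtain the partition $\varphi(\xi)=\sum_{|\alpha|=k}g_\alpha(\xi)(i\xi)^\alpha$. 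Rescaling and inverting the Fourier transform yields $\dot\Delta_qu=\sum_{|\alpha|=k}2^{-q|\alpha|}\,2^{2q}h_\alpha(2^q\cdot)\ast(\partial^\alpha\dot\Delta_qu)$ with $h_\alpha=\mathcal F^{-1}g_\alpha$ Schwartz, and Young's inequality with $r=1$ then gives $\Vert\dot\Delta_qu\Vert_{L^b}\le C\,2^{-qk}\sup_{|\alpha|=k}\Vert\partial^\alpha\dot\Delta_qu\Vert_{L^b}$, which rearranges to the claimed lower bound.

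Finally I would verify the exponential constant $C^k$. For each direction the relevant quantity is a single Lebesgue norm of $h_\alpha=\mathcal F^{-1}g_\alpha$, controlled by finitely many Schwartz seminorms of $g_\alpha$; the $k$ factors of $(i\xi)$, the binomial coefficients $\binom{k}{\alpha}$, and the polynomially-many multi-indices with $|\alpha|\le k$ all contribute at most geometric growth in $k$, which is absorbed into $C^k$. The main obstacle, and the only step that is not a routine rescaling of Young's inequality, is the construction of this inverse multiplier for the lower bound: it hinges entirely on the fact that $\dot\Delta_q$ localizes to an annulus bounded away from the origin, so that division by $|\xi|^{2k}$ keeps $g_\alpha$ smooth and compactly supported.
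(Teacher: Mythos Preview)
The paper does not supply a proof of this lemma: it is quoted as the classical Bernstein inequalities and immediately used (the surrounding text refers to it as a known fact of Littlewood--Paley theory, with \cite{B-C-D} as the implicit reference). There is therefore nothing in the paper to compare your argument against.

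Your proof is correct and is precisely the standard one found in the reference the paper cites. The three ingredients --- realizing $S_q$ and $\dot\Delta_q$ as convolutions with rescaled fixed Schwartz kernels, applying Young's inequality, and inverting the symbol on the annulus via the multinomial identity $|\xi|^{2k}=\sum_{|\alpha|=k}\binom{k}{\alpha}(i\xi)^\alpha(-i\xi)^\alpha$ to obtain the lower bound --- are exactly what is expected. Your tracking of the constant $C^k$ is also adequate: only a fixed finite number of derivatives of $g_\alpha$ are needed to control $\Vert h_\alpha\Vert_{L^1}$, and each such derivative grows at most geometrically in $k$ on the compact support. One cosmetic remark: in the lower-bound step your identity $\sum_{|\alpha|=k}g_\alpha(\xi)(i\xi)^\alpha$ reconstructs $\tilde\varphi$ rather than $\varphi$, but since $\tilde\varphi\equiv 1$ on $\mathrm{Supp}\,\varphi$ this multiplier acts as the identity on $\widehat{\dot\Delta_q u}$, which is all you need.
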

Based on Littlewood-Paley operators, we can define  Besov spaces as follows. Let $(p, r) \in [1, +\infty]^2$ and $s \in \RR$. The non homogeneous Besov space $B_{p,r}^s$ is the set of tempered distributions $v$ such that
$$
\Vert v\Vert_{B_{p, r}^{s}}\triangleq\Big\Vert\big(2^{qs}\Vert\Delta_{q}v\Vert_{L^p}\big)_{q\in\mathbb{Z}}\Big\Vert_{\ell^{r}(\mathbb{Z})}<+\infty.
$$
The homogeneous Besov space $\dot{B} ^s_{p,r}$ is defined as the set of $\mathcal{ S}'(\RR^2)/\{\mathcal{P}[\RR^2]\}$  such that
$$
\Vert v\Vert_{\dot{B}_{p, r}^{s}}\triangleq\Big\Vert\big(2^{qs}\Vert\dot{\Delta}_{q}v\Vert_{L^p}\big)_{q\in\mathbb{Z}}\Big\Vert_{\ell^{r}(\mathbb{Z})}<+\infty.
$$
We point out that, for a strictly positive non integer real number $s$ the Besov space $B_{\infty,\infty}^s$ coincides with the usual   H\"{o}lder space $C^s$. For $s\in]0,1[$, this means that
$$
\Vert v\Vert_{B^s_{\infty,\infty}}\lesssim \Vert v\Vert_{L^\infty}+\sup_{x\neq y}\frac{\vert v(x)-v(y)\vert}{\vert x-y\vert^s}\lesssim\Vert v\Vert_{B^s_{\infty,\infty}}.
$$
 Also we can identify $B_{2,2}^s$ with the Sobolev space $H^s$ for all $s\in\RR$. \\
The following embeddings are an easy consequence of Bernstein inequalities,
$$
B_{p_1, r_1}^{s}\hookrightarrow B_{p_2, r_2}^{s+2(\frac{1}{p_2}-\frac{1}{p_1})} \quad p_1\leq p_2\quad \textnormal{and}\quad r_1\leq r_2.
$$
Next, we recall the  log-Lipschitz space, denoted by $LL$. It is the set of bounded functions $v$  such that
$$
\Vert v \Vert_{LL}\triangleq \underset {0<\vert x- y\vert<1}{\sup}\frac{\vert v(x)-v(y)\vert}{\vert x-y\vert  \log \frac{e}{\vert x-y \vert} }<+\infty.
$$
Note that the space $B^1_{\infty ,\infty}$ is a subspace of $LL$.  More precisely, we have the following inequality, see \cite{B-C-D} for instance. 

\begin{equation}\label{ll1}
\Vert v\Vert_{LL}\lesssim\Vert\nabla v\Vert_{B_{\infty,\infty}^0}.
\end{equation}
If in addition $v$ is  divergence-free and under sufficient  conditions of integrability, the velocity $v$ is determined by the vorticity $\omega\triangleq \textnormal{rot} v$ by means of the Biot-Savart law 
\begin{equation}\label{b-s}
v(x)=\frac{1}{2\pi}\int_{\RR^2}\frac{(x-y)^\perp}{\vert x-y\vert^2}\omega(y)dy.
\end{equation}
The following result is a deep estimate of harmonic analysis and  related to the singular integrals of  Calder\'on-Zygmund type,
\begin{equation}\label{c-z}
\Vert \nabla v\Vert_{L^p}\leq C\dfrac{p^2}{p-1}\Vert \omega\Vert_{L^p}.
\end{equation}
where C is a universal constant and $p\in]1,\infty[$.\\
\begin{lemma}\label{22}
Let $v$ be a smooth vector field and $\omega$ its vorticity.  Define the compressible part of $v$ by 
$$
\mathbb{Q}v\triangleq \nabla \Delta^{-1}\textnormal{div}\, v.
$$
Then,
\begin{equation}
\Vert v\Vert_{LL} \lesssim \Vert \mathbb{Q} v\Vert_{L^{\infty}}+\Vert\omega\Vert_{L^p\cap B_{\infty,\infty}^0}+\Vert\textnormal{div}\, v\Vert_{B^0_{\infty,\infty}}.
\end{equation}
\end{lemma}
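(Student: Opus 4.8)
The plan is to separate $v$ into its solenoidal and compressible parts through the Hodge decomposition $v=\mathbb{P}v+\mathbb{Q}v$, where $\mathbb{P}v=v-\mathbb{Q}v$ is divergence-free with $\textnormal{curl}\,\mathbb{P}v=\omega$, and to estimate the log-Lipschitz modulus of each piece separately. For the solenoidal part I would invoke \eqref{ll1}, which reduces the task to bounding $\Vert\nabla\mathbb{P}v\Vert_{B^0_{\infty,\infty}}$. Since in two dimensions $\Delta\mathbb{P}v=\nabla^\perp\omega$, the tensor $\nabla\mathbb{P}v$ is obtained from $\omega$ through the zeroth-order Fourier multiplier $\xi\otimes\xi^\perp/|\xi|^2$, which is smooth and homogeneous away from the origin. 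On each dyadic block $\Delta_q$ with $q\ge0$ this multiplier is, after rescaling, a fixed smooth function, so the associated kernel has $L^1$-norm bounded uniformly in $q$ and hence $\Vert\Delta_q\nabla\mathbb{P}v\Vert_{L^\infty}\lesssim\Vert\omega\Vert_{B^0_{\infty,\infty}}$. The only delicate block is $q=-1$, where the multiplier is singular at the origin; here I would first pass from $L^\infty$ to $L^p$ by Bernstein's inequality (Lemma \ref{br} with $q=-1$) and then use the Calderón--Zygmund estimate \eqref{c-z} to get $\Vert\Delta_{-1}\nabla\mathbb{P}v\Vert_{L^\infty}\lesssim\Vert\nabla\mathbb{P}v\Vert_{L^p}\lesssim\Vert\omega\Vert_{L^p}$. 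Altogether $\Vert\mathbb{P}v\Vert_{LL}\lesssim\Vert\omega\Vert_{L^p\cap B^0_{\infty,\infty}}$.

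For the compressible part I would estimate the quotient defining $\Vert\cdot\Vert_{LL}$ directly by a Littlewood--Paley splitting. Writing $\mathbb{Q}v(x)-\mathbb{Q}v(y)=\sum_{q\ge-1}\big(\Delta_q\mathbb{Q}v(x)-\Delta_q\mathbb{Q}v(y)\big)$ and cutting the sum at a frequency $N$ with $2^{-N}\sim|x-y|$, I would control the low frequencies $q\le N$ by the mean value inequality $|\Delta_q\mathbb{Q}v(x)-\Delta_q\mathbb{Q}v(y)|\le|x-y|\,\Vert\nabla\Delta_q\mathbb{Q}v\Vert_{L^\infty}$ and the high frequencies $q>N$ by $2\Vert\Delta_q\mathbb{Q}v\Vert_{L^\infty}$. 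Now $\nabla\mathbb{Q}v=\nabla\nabla\Delta^{-1}\textnormal{div}\,v$ is a zeroth-order multiplier applied to $\textnormal{div}\,v$, while $\mathbb{Q}v=\nabla\Delta^{-1}\textnormal{div}\,v$ is of order $-1$; so for $q\ge0$ the same block-by-block argument as above yields $\Vert\Delta_q\nabla\mathbb{Q}v\Vert_{L^\infty}\lesssim\Vert\textnormal{div}\,v\Vert_{B^0_{\infty,\infty}}$ and $\Vert\Delta_q\mathbb{Q}v\Vert_{L^\infty}\lesssim2^{-q}\Vert\textnormal{div}\,v\Vert_{B^0_{\infty,\infty}}$. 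The low-frequency block $q=-1$ is again the singular one, and this is where $\Vert\mathbb{Q}v\Vert_{L^\infty}$ enters: by Bernstein and the $L^\infty$-boundedness of $\Delta_{-1}$ one has $\Vert\nabla\Delta_{-1}\mathbb{Q}v\Vert_{L^\infty}\lesssim\Vert\Delta_{-1}\mathbb{Q}v\Vert_{L^\infty}\lesssim\Vert\mathbb{Q}v\Vert_{L^\infty}$. Summing, the low part contributes $|x-y|\big(\Vert\mathbb{Q}v\Vert_{L^\infty}+N\Vert\textnormal{div}\,v\Vert_{B^0_{\infty,\infty}}\big)$ and the high tail $\sum_{q>N}2^{-q}\Vert\textnormal{div}\,v\Vert_{B^0_{\infty,\infty}}\sim|x-y|\,\Vert\textnormal{div}\,v\Vert_{B^0_{\infty,\infty}}$; dividing by $|x-y|\log(e/|x-y|)$ and recalling $N\sim\log(1/|x-y|)$ gives $\Vert\mathbb{Q}v\Vert_{LL}\lesssim\Vert\mathbb{Q}v\Vert_{L^\infty}+\Vert\textnormal{div}\,v\Vert_{B^0_{\infty,\infty}}$.

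Adding the two bounds yields the claim. The main obstacle, and the reason all three terms on the right-hand side are needed, is precisely the low-frequency ($q=-1$) block: the multipliers $\xi\otimes\xi^\perp/|\xi|^2$ and $\xi/|\xi|^2$ are singular at the origin, so the clean block-by-block boundedness valid for $q\ge0$ fails there and cannot be closed within $B^0_{\infty,\infty}$ alone. This forces the use of the Calderón--Zygmund $L^p$ bound \eqref{c-z} for the solenoidal part and of the explicit norm $\Vert\mathbb{Q}v\Vert_{L^\infty}$ for the compressible part. A minor technical point to be verified is the uniform-in-$q$ control of the convolution kernels obtained by localizing the homogeneous multipliers to the dyadic annuli, which follows from their scale invariance together with standard estimates on the Fourier transform of compactly supported smooth symbols.
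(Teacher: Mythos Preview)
Your proof is correct and follows essentially the same route as the paper: Hodge decomposition $v=\mathbb{P}v+\mathbb{Q}v$, reduction to $B^0_{\infty,\infty}$ control of the gradients, the same Bernstein-plus-\eqref{c-z} treatment of the low-frequency block for $\mathbb{P}v$, and the same use of $\Vert\mathbb{Q}v\Vert_{L^\infty}$ for the $q=-1$ block of the compressible part. The only cosmetic difference is that the paper applies \eqref{ll1} once to $v$ and then bounds $\Vert\nabla\mathbb{Q}v\Vert_{B^0_{\infty,\infty}}$ directly, whereas you unfold the Littlewood--Paley splitting of the $LL$ quotient for $\mathbb{Q}v$ by hand; the two computations are equivalent.
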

\begin{proof}
The identity $v=\mathbb{P}v+\mathbb{Q}v$ and the  estimate (\ref{ll1})  ensure that
\begin{eqnarray}\label{vll}
\Vert v\Vert_{LL}  &\lesssim & \Vert\nabla\mathbb{P}v\Vert_{B_{\infty,\infty}^0}+\Vert \nabla \mathbb{Q} v\Vert_{B_{\infty,\infty}^0}.\notag \end{eqnarray}
Bernstein inequality, the continuity of $\dot{\Delta}_q\mathbb{P}: L^p \longrightarrow L^p,\,\forall p \in [1,\infty]$ uniformly in $q$ and the classical fact  $\Vert\dot{\Delta}_qv\Vert_{L^\infty}\sim 2^{-q}\Vert\dot{\Delta}_q\omega\Vert_{L^\infty}$ give
\begin{eqnarray}
 \Vert\nabla\mathbb{P}v\Vert_{B_{\infty,\infty}^0} &\leq &\Vert \Delta_{-1}\nabla\mathbb{P} v\Vert_{L^{\infty}}+\sup_{q\in\NN}\Vert\dot{\Delta}_{q}\nabla\mathbb{P} v\Vert_{L^{\infty}}\notag\\ &\lesssim &\Vert \Delta_{-1}\nabla\mathbb{P} v\Vert_{L^{p}}+\sup_{q\in\NN}2^q\Vert\dot{\Delta}_{q}\mathbb{P} v\Vert_{L^{\infty}}\notag\\ &\lesssim & \Vert \nabla\mathbb{P} v\Vert_{L^{p}}+\sup_{q\in\NN}\Vert\dot{\Delta}_{q}\omega\Vert_{L^{\infty}}.\notag
\end{eqnarray}
Since the incompressible part $\mathbb{P}v$ has the same vorticity $\omega$ of  the total velocity:
$$ 
\textnormal{curl }\mathbb{P}v=\omega,
$$  
then by the inequality \eqref{c-z} we get
\begin{equation}
 \Vert\nabla\mathbb{P}v\Vert_{B_{\infty,\infty}^0}\lesssim  \Vert \omega\Vert_{L^{p}}+\Vert\omega\Vert_{B_{\infty,\infty}^0}.\notag
 \end{equation}
 On other hand,
\begin{eqnarray*}
\Vert\nabla\mathbb{Q} v\Vert_{B_{\infty,\infty}^0}&\leq &\Vert \Delta_{-1}\nabla\mathbb{Q} v\Vert_{L^{\infty}}+\sup_{q\in \NN}\Vert\dot{\Delta}_{q}\nabla^2\Delta^{-1}\textnormal{div}\, v\Vert_{L^{\infty}}\\&\leq &\Vert \mathbb{Q} v\Vert_{L^{\infty}}+\Vert\textnormal{div}\, v\Vert_{B^0_{\infty,\infty}}.
\end{eqnarray*}
This concludes the proof of the lemma.
\end{proof}
The following result generalizes the classical Gronwall inequality,  it will be very useful in the proof of the lifespan part of Theorem \ref{th1}. For its proof see Lemma 5.2.1 in \cite{C} .
\begin{lemma}\label{osgood}[Osgood Lemma ]
Let $a, C > 0$, $\gamma : [t_0,T]\rightarrow \RR_+$ be a locally integrable function and  $\mu:[a,+\infty[\longrightarrow \RR_+$ be  a continuous non-decreasing function.  Let  $\rho : [t_0,T]\rightarrow [a,+\infty[$ be a measurable, positive function satisfying
$$
\rho(t)\leq C+\int_{t_0}^t\gamma(s)\mu(\rho(s))ds.
$$
Set $\displaystyle{\mathcal{M}(y)=\int_{a}^{y} \frac{dx}{\mu(x)}}$ and assume that $\displaystyle{\lim_{y\to+\infty}\mathcal{M}(y)=+\infty}$. Then
$$
\forall t\in[t_0,T], \quad \rho(t)\leq \M^{-1}\Big(\M(C)+\int_{t_0}^t\gamma(s)ds\Big).
$$
\end{lemma}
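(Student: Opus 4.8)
The plan is to linearize the integral inequality by studying its right-hand side and then integrating against the antiderivative $\M$. First I would set $R(t)\triangleq C+\int_{t_0}^t\gamma(s)\mu(\rho(s))ds$, so that the hypothesis reads $\rho(t)\le R(t)$ on $[t_0,T]$, with $R(t_0)=C$. Assuming, as we may, that the integrand $\gamma\,\mu(\rho)$ is integrable on $[t_0,T]$ (otherwise the stated bound is trivially true), $R$ is absolutely continuous with $R'(t)=\gamma(t)\mu(\rho(t))$ for almost every $t$. Evaluating the hypothesis at $t=t_0$ also gives $a\le\rho(t_0)\le C$, so that $R$ is nondecreasing with values in $[C,+\infty)\subset[a,+\infty)$, and the composition $\M(R(t))$ is meaningful.

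The decisive step exploits the monotonicity of $\mu$: from $\rho(t)\le R(t)$ we obtain $\mu(\rho(t))\le\mu(R(t))$, hence $R'(t)\le\gamma(t)\mu(R(t))$ a.e. Because $\mu$ is continuous and strictly positive, $\M$ is $C^1$ and strictly increasing on $[a,+\infty)$ with $\M'(y)=1/\mu(y)$, so dividing by $\mu(R(t))>0$ recasts the inequality as $\frac{d}{dt}\M(R(t))\le\gamma(t)$ a.e. Integrating from $t_0$ to $t$ and using $R(t_0)=C$ yields
$$\M(R(t))\le\M(C)+\int_{t_0}^t\gamma(s)ds.$$

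To conclude I would invert $\M$. The assumption $\lim_{y\to+\infty}\M(y)=+\infty$ together with strict monotonicity makes $\M^{-1}$ a well-defined increasing function on $[\M(a),+\infty)$; applying it to the previous display and recalling $\rho(t)\le R(t)$ gives exactly
$$\rho(t)\le\M^{-1}\Big(\M(C)+\int_{t_0}^t\gamma(s)ds\Big).$$

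The only genuinely delicate point, and the step I would treat most carefully, is the regularity bookkeeping behind the identity $\frac{d}{dt}\M(R(t))=R'(t)/\mu(R(t))$ a.e. when $\rho$ is merely measurable. Here $R$ is absolutely continuous with compact range over $[t_0,T]$ (being continuous on a compact interval), and on that range $1/\mu$ is bounded since $\mu$ is continuous and strictly positive; thus $\M$ is Lipschitz there, and the chain rule for the composition of a $C^1$ map with an absolutely continuous one applies almost everywhere. Should one wish to sidestep any measure-theoretic subtlety altogether, one can instead replace $C$ by $C+\delta$, run the same argument with the strictly positive lower bound $R_\delta\ge C+\delta$, and let $\delta\to0^+$ at the end using the continuity of $\M^{-1}$.
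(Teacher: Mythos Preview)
Your argument is correct and is precisely the classical proof of the Osgood lemma. Note, however, that the paper does not supply its own proof of this statement: it simply refers the reader to Lemma~5.2.1 in Chemin's monograph~\cite{C}, so there is nothing to compare against beyond observing that your approach is the standard one found there.
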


Next, we will introduce a new space which play a crucial role in the study of Euler equations as we will see later.

\subsection{The $\BMOF$ space}The main goal of this section is to introduce the weighted BMO spaces denoted by $BMO_F$ where the function $F$ measures the rate  between  two oscillations. Thereafter, we shall focus on the analysis of some of their useful topological properties.  

\quad To start with, let us recall the classical $\BMO$ space (bounded mean oscillation), which is nothing but   the set of locally integrable function $f:\RR^2\to\RR$ such that
$$
\Vert f\Vert_{\BMO}\triangleq\sup_{B}\fint_{B}\Big\vert f-\fint_{B}f\Big\vert<+\infty,
$$
where $B$ runs over all the balls in $\RR^2$.

It is well-known that the quotient space $BMO$ space by the constants  is a Banach space. Moreover, the $BMO$ space enjoys with the following classical properties:
\begin{enumerate}
\item  BMO  is imbricated between the space of bounded functions and the $ B_{\infty,\infty}^0$ space, that is, $$L^\infty\hookrightarrow BMO\hookrightarrow B_{\infty,\infty}^0.
$$
\item The unit ball of the BMO space is a  weakly compact set.
\end{enumerate}
Another interesting property of the BMO space concerns the rate between two oscillations which is estimated as follows:
take two balls $B_1=B(x_1,r_1)$ and $B_2=B(x_2,r_2)$ such that $2B_2\subset B_1$, then
\begin{equation}\label{eq1}
\Big\vert\fint_{B_{2}}f-\fint_{B_{1}}f\Big\vert\lesssim \ln(1+\frac{r_1}{r_2})\Vert f\Vert_{\BMO}.
\end{equation}
It is worthy pointing out that the local well-posedness theory for the incompressible Euler equations is still an open problem when the initial vorticity belongs to the BMO space. However, it was proved recently in \cite{B-K} that the global well-posedness  can be achieved for the $LBMO$ space, which is larger than the bounded functions and smaller than the BMO space. To be precise this space is defined by
$$
\|f\|_{LBMO}\triangleq\| f\|_{BMO}+\sup_{2B_{2}\subset B_{1}}\dfrac{\big\vert\fint_{B_{2}}f-\fint_{B_{1}}f\big\vert}{\ln\Big|\dfrac{\ln r_{2}}{\ln r_{1}}\Big|}<+\infty,
$$
where the radius $r_1$ of $B_1$ is smaller than $\frac12$. It seems that we can perform the same result for more general spaces by replacing the "outside" logarithm in the second part of the norm by a general function $F$ which must satisfy some special assumptions listed below. We mention that a similar  extension was done in the paper \cite{B-H}. Before stating the definition of these spaces we need the following notions.
\begin{definition}\label{def1}
 Let $F:[1,+\infty[\longrightarrow[1,+\infty[$ be a nondecreasing function.
 \begin{enumerate}
\item[•] We say that $F$ belongs to the class $\mathcal{F}$ if there exits a constant $C>0$ such that the following properties hold true:

\begin{enumerate}
\item Blow up at infinity: $\lim_{x\rightarrow 0}F(x)=+\infty$.
\item {\it Asymptotic behavior}: For any $\lambda\in[1,+\infty[$ and $ x\in[\lambda,+\infty[$, we have
$$
\int_{x}^{+\infty}e^{-\frac{y}{\lambda}}F(y)\dy\leq C\,\lambda\, e^{-\frac{x}{\lambda}}F(x).
$$
\item Polynomial growth: For all $(x,y)\in ([1,+\infty[)^2$
$$
 F(xy)\leq CF(x)F(y).
$$
\end{enumerate}
\item[•] We say that F belongs to the class $\mathcal{F}'$ if it belongs to $\mathcal{F}$ and satisfies
the Osgood condition:
 $$ \displaystyle\int_{1}^{\infty} \dfrac{\dx}{xF(x)}=+\infty.$$
\end{enumerate}
\end{definition}
Now we shall give some elementary facts listed in the following remark.
\begin{remark}\label{rq1}
\begin{enumerate}
\item Note that the condition $\textnormal{(c)}$ in the previous definition  implies that the function $F$ has at most a polynomial growth. More precisely,  there exists $\alpha>0$ such that
\begin{equation}\label{eq}
F(x)\leq Cx^\alpha \quad \forall x\in[1,+\infty[.  
\end{equation}
\item It turns out from the monotony of $F$ that
$$
F(n)e^{-\frac{n+1}{\lambda}}\leq \int_n^{n+1}F(x)e^{-\frac{x}{\lambda}}dx\leq F(n+1)e^{-\frac{n}{\lambda}}.
$$
Thus, we get the equivalence $$
e^{-\frac{1}{\lambda}}\sum_{n\geq N}e^{-\frac{n}{\lambda}}F(n)\leq \int_N^{+\infty}e^{-\frac{x}{\lambda}}F(x)dx\leq e^{\frac{1}{\lambda}}\sum_{n\geq N}e^{-\frac{n}{\lambda}}F(n).
$$
Consequently,  using the asymptotic behavior of $F$ described by the point $(b)$  and making  a change of variable we obtain for all $a,b\in\RR_+^*$,
\begin{equation}\label{eqis}
\frac{1}{C}\sum_{n> N}e^{-\frac{n}{\lambda}}F\Big(\frac{n+a}{b}\Big)\leq \int_N^{+\infty}e^{-\frac{x}{\lambda}}F(\frac{x+a}{b})dx \leq C\lambda e^{-\frac{N}{\lambda}}F\Big(\frac{N+a}{b}\Big).
\end{equation}
\item  Several examples of functions belonging to the class $\mathcal{F}'$ can be found, for instance, we mention:  $x\longmapsto1+\ln^\alpha(x)$ with $0<\alpha\leq 1$; $x\longmapsto 1+\ln\ln(e+x)\ln(x)$. 
 \item $\mathcal{F}$ is strictly embedded in $\mathcal{F}'$ : For any $\beta>0$, the function $x\longmapsto x^\beta$ belongs to the class $\mathcal{F}\backslash \mathcal{F}'$.
\end{enumerate}
\end{remark}
The  $\BMOF$ space is given by the following definition. 
\begin{definition}
Let $F$ in $\mathcal{F}$, we denote by $\BMO_F$ the space of  the locally integrable functions $f:\RR^2\to\RR$ such that 
$$
\Vert f \Vert_{\BMOF}=\Vert f\Vert_{\BMO}+\sup_{B_{1},B_{2}}\dfrac{\big\vert\fint_{B_{2}}f-\fint_{B_{1}}f\big\vert}{F\Big(\dfrac{1-\ln r_{2}}{1-\ln r_{1}}\Big)}<+\infty,
$$
 where the supremum is taken over all the pairs of balls $B_{2}=B(x_{2},r_{2})$ and $ B_{1}=B(x_{1},r_{1})$ in $\RR^2$ with $0<r_1\leq 1$ and $2B_2\subset B_1$.
\end{definition}
In the next proposition we shall deal with some topological properties for the $BMO_F$ spaces.
\begin{proposition}\label{proptop}
The following properties hold true.
\begin{enumerate}
\item The space $\BMOF$ is complete, included in $BMO$ and  containing $L^\infty(\RR^2)$.
\item If $F, G\in \mathcal{F}$ with $F\lesssim G$ then $BMO_F\hookrightarrow BMO_G.$
\item If \quad$ \ln(1+x)\lesssim F(x), \forall x\geq1$, \, then $L^\infty$ is strictly embedded in $BMO_F.$
\item Let $p\in ]1,\infty],$ then $BMO_F\cap L^p$ is weakly compact.
\item For $g \in L^1$ and $f \in BMO_F$ one has
$$
\Vert g*f\Vert_{BMO_F}\leq \Vert g\Vert_{L^1}\Vert f\Vert_{BMO_F}.
$$
\end{enumerate}
\end{proposition}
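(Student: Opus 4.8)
The plan is to treat the five properties one at a time, since each is essentially independent and invokes a different piece of machinery. The guiding principle throughout is that the $BMO_F$ seminorm controls the oscillation between two \emph{nested} balls by $F\big(\frac{1-\ln r_2}{1-\ln r_1}\big)$, a quantity that is bounded as long as the radii stay comparable and blows up only very slowly (like $F$) as $r_2/r_1\to 0$. Property (1) splits into three claims. For the inclusion $BMO_F\hookrightarrow BMO$, the definition of the norm already bundles $\|f\|_{BMO}$ as a summand, so the inclusion and the inequality $\|f\|_{BMO}\le\|f\|_{BMO_F}$ are immediate. For $L^\infty\subset BMO_F$, I would simply bound the oscillation $\big|\fint_{B_2}f-\fint_{B_1}f\big|\le 2\|f\|_{L^\infty}$ and use that $F\ge 1$ (since $F$ maps into $[1,+\infty[$) to conclude $\|f\|_{BMO_F}\lesssim\|f\|_{L^\infty}$. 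Completeness is the only substantive point: I would take a Cauchy sequence $(f_n)$ in $BMO_F$, use completeness of $BMO$ (modulo constants) to extract a $BMO$ limit $f$, and then verify that $f$ inherits the weighted oscillation bound by passing to the limit inside the supremum — for each fixed pair of nested balls the averages $\fint_{B_i}f_n$ converge, so the quotient defining the second part of the norm passes to the limit, and taking the supremum afterwards preserves finiteness.

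Properties (2) and (3) are short. For (2), if $F\lesssim G$ then for every nested pair the weight satisfies $F\big(\tfrac{1-\ln r_2}{1-\ln r_1}\big)\lesssim G\big(\tfrac{1-\ln r_2}{1-\ln r_1}\big)$ (monotonicity of the quotient in the argument, with $F,G$ nondecreasing), whence the $G$-quotient dominates the $F$-quotient up to a constant and $\|f\|_{BMO_G}\lesssim\|f\|_{BMO_F}$. For (3), the embedding $L^\infty\subset BMO_F$ is (1); strictness is the interesting half. Here I would exhibit an explicit unbounded function of logarithmic type — the natural candidate is $f(x)=\ln(1+\ln^+\tfrac{1}{|x|})$ or a variant — and check that its oscillation between nested balls of radii $r_2\le r_1$ is controlled by $\ln\big(\tfrac{1-\ln r_2}{1-\ln r_1}\big)$, which under the hypothesis $\ln(1+x)\lesssim F(x)$ is dominated by the weight $F$. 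The $BMO$ part of the norm is handled by the classical fact that $\ln|x|$-type functions lie in $BMO$, so $f\in BMO_F\setminus L^\infty$.

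Property (5) is a convolution/averaging estimate: writing $(g*f)$ and using $\fint_B(g*f)=\int g(y)\,\big(\fint_{B}f(\cdot-y)\big)\,dy$ by Fubini, the oscillation of $g*f$ between two balls is the $g$-average of the oscillations of the translates $f(\cdot-y)$, which share the same $BMO_F$ norm by translation invariance; Minkowski/Jensen then yields $\|g*f\|_{BMO_F}\le\|g\|_{L^1}\|f\|_{BMO_F}$, and the same computation gives the $BMO$ part.

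The main obstacle is property (4), the weak compactness of $BMO_F\cap L^p$. I expect this to require the real work. The strategy is to take a sequence $(f_n)$ bounded in $BMO_F\cap L^p$ with $p\in]1,\infty]$; boundedness in $L^p$ and reflexivity (or weak-$*$ compactness when $p=\infty$) yield a subsequence converging weakly to some $f$ in $L^p$. The delicate step is to show that $f$ still lies in $BMO_F$ with norm controlled by $\liminf\|f_n\|_{BMO_F}$. Since each average $\fint_{B}f$ is a bounded linear functional against the $L^{p'}$ (or $L^1$) function $\frac{1}{|B|}\mathbf 1_B$, weak convergence gives $\fint_{B}f_n\to\fint_{B}f$ for every fixed ball; the oscillation quotient for each \emph{fixed} nested pair then passes to the limit and is bounded by $\sup_n\|f_n\|_{BMO_F}$, and taking the supremum over all pairs afterwards preserves the bound — this is the lower semicontinuity of the $BMO_F$ seminorm under weak convergence. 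The subtlety is the interplay between the $L^p$ weak topology and the need to control \emph{all} scales uniformly; I would lean on property (2) of the classical $BMO$ list stated above (weak compactness of the $BMO$ unit ball) as the template and adapt it, using the weighted oscillation bound as an additional constraint that is itself lower semicontinuous. Care is needed at the smallest scales $r_1\to 0$, where the weight degenerates, but since the defining supremum is over $0<r_1\le 1$ with $F$ fixed and the per-pair limits are unconditional, the argument closes.
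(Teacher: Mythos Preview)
Your proposal is correct and follows essentially the same route as the paper: the embeddings in (i) are immediate, completeness goes via $BMO$-completeness and $L^1_{loc}$ convergence of averages, (ii) is read off from the definition, (iii) uses the explicit doubly-logarithmic function (the paper routes this through the embedding $LBMO\hookrightarrow BMO_F$ and cites \cite{B-K}, but the content is identical), (iv) is exactly the weak-$L^p$ extraction plus lower semicontinuity of the oscillation quotient for each fixed pair of balls, and (v) is the commutation of convolution with ball averages. The only point where the paper is slightly more explicit is in noting that $BMO$ convergence gives $L^1_{loc}$ convergence (so that averages over fixed balls do converge), which you implicitly use in the completeness step.
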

\begin{proof}

$\hbox{(i)}$ The two embeddings are straightforward. For the completeness of the space we consider  a Cauchy sequence $(u_n)_n$ in $BMO_F$. Since $BMO_F$ is contained in $BMO$ which is complete, then this sequence converges in $BMO$ and then in $L^1_{loc}$ (see \cite{G2} for instance). Using the definition of the second term of the $BMO_F$ norm and the  convergence in $L^1_{loc}$, we get  the convergence  in $BMO_F$.

$\hbox{(ii)}$ The embedding is obvious from the definition of the second term of the $BMO_F$ norm.

$\hbox{(iii)}$ According to the assertion (ii) we have the embedding $LBMO=BMO_{1+\ln}\hookrightarrow BMO_F$. Now, we conclude by the result of  \cite{B-K} where it is proved that the unbounded function defined by 
\begin{displaymath}
x\longmapsto\left\{ \begin{array}{ll}
\ln(1+\ln\vert x\vert)\quad \textnormal{if}\quad \vert x\vert\leq 1 &\\
0 \quad \textnormal{if}\quad \vert x\vert \geq 1.
\end{array} \right.
\end{displaymath} 
belongs to $BMO_{1+\ln}$.\\

$\hbox{(iv)}$  Let $(w_n)$ be a bounded sequence of $BMO_F\cap L^p$, that is
$$
\sup_{n}\|w_n\|_{BMO_F\cap L^p}= M<\infty.
$$ 
We shall prove that up to an extraction we can find a subsequence denoted also by $(w_n)$ which converges weakly to some $w\in BMO_F\cap L^p$.   The bound of $(w_n)_n$  in $L^p$ implies the existence of a subsequence, denoted also by $(w_n)_n$, and a function $w\in  L^p$  such that $(w_n)$ converges weakly in $L^p$ and consequently for all $B=B(x,r)$ we have
\begin{equation}\label{eqwn}
\lim_{n\to +\infty}\fint_B w_n dx =\fint_B w dx.
\end{equation}
In addition we have
$$
\Vert w\Vert_{L^p}\leq  \lim_{n\to \infty}\inf\Vert w_n\Vert_{L^p}\leq M.
$$
Let $B_1$ and $B_2$ be two balls in $\RR^2$ such that  $2B_2\subset B_1$ and $0<r_1\leq 1$ . As $F$ is larger than $1$ we may write 
 \begin{eqnarray*}
\dfrac{\Big\vert \fint_{B_1} w-\fint_{B_2}w\Big\vert}{F\big(\frac{1-\ln r_2}{1-\ln r_1}\big)} &\leq &  \Big\vert\fint_{B_1} (w-w_n)-\fint_{B_2}(w-w_n)\Big\vert+\dfrac{\Big\vert\fint_{B_1} w_n-\fint_{B_2}w_n\Big\vert}{F\big(\frac{1-\ln r_2}{1-\ln r_1}\big)}\\ & \leq &\Big\vert\fint_{B_1} (w-w_n) \Big\vert + \Big\vert\fint_{B_2}(w-w_n)\Big\vert  + M.
 \end{eqnarray*}
Hence, we get from  (\ref{eqwn}) that
 $$
\dfrac{\Big\vert \fint_{B_1} w-\fint_{B_2}w\Big\vert}{F\big(\frac{1-\ln r_2}{1-\ln r_1}\big)} \leq M.
 $$
 Moreover, from the weak compactness in the BMO space we have
 $$
\Vert w\Vert_{\BMO}\leq  \lim_{n\to \infty}\inf\Vert w_n\Vert_{\BMO}\leq M.
$$

$\hbox{(v)}$ The result follows immediately from the identity
 $$
 x\longmapsto \fint_{B(x,r)}(g*f)=\Big(g* \fint_{B(.,r)}f\Big)(x)\quad \forall r>0.
 $$

\end{proof}
\begin{remark}\label{rq5}
By using the H\"{o}lder inequality we observe that for any ball $B$ of radius $r$ we have
$$
\fint_B \vert f- \fint_B f\vert\leq Cr^{-\frac{2}{p}}\Vert f\Vert_{L^p}.
$$
In this respect we will only need  to deal with balls whose radius is smaller than a universal constant, say $1$.
 \end{remark}
The following proposition gives a rigorous justification for the choice of the initial data given by \mbox{Remark \ref{rqq2}}.
\begin{proposition}\label{proposi2}
Let $1<p<2$, $R>1$  and $\chi$ a be smooth  compactly supported function equal to 1 in the neighborhood of the unit ball. Let  $\omega\in \BMOF \cap L^p$ be the vorticity of a divergence-free vector filed $v$. Then
 $$
\Big\Vert \textnormal{rot}\Big(\chi\big(\frac{\cdot}{R}\big) v\Big)\Big\Vert_{\BMOF\cap L^p}\lesssim \big(\Vert \chi\Vert_{L^\infty}+ \Vert \nabla\chi\Vert_{L^\infty}\big)\Vert \omega\Vert_{BMO_F\cap L^p},
$$
and
$$
\lim_{R\rightarrow +\infty}\Big\Vert \textnormal{rot}\Big(\chi\big(\frac{\cdot}{R}\big)v\Big)-\omega\Big\Vert_{L^p}=0.
$$
\end{proposition}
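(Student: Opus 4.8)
The plan is to start from the pointwise identity for the curl of the truncated field,
\[
\textnormal{rot}\Big(\chi\big(\tfrac{\cdot}{R}\big)v\Big)=\chi\big(\tfrac{\cdot}{R}\big)\,\omega+\nabla^{\perp}\!\chi\big(\tfrac{\cdot}{R}\big)\cdot v,
\]
which reduces the statement to two contributions: the cut-off vorticity $\chi_R\omega$ (with $\chi_R:=\chi(\cdot/R)$) and the lower–order term $h:=\nabla^{\perp}\chi_R\cdot v$, supported in the annulus $|x|\sim R$ where $\nabla\chi_R$ lives and of size $\|\nabla\chi_R\|_{L^\infty}=R^{-1}\|\nabla\chi\|_{L^\infty}$. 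Before estimating norms I would record two facts about $v$ coming from the Biot--Savart law \eqref{b-s}: splitting the kernel $|x-y|^{-1}$ at scale $1$, the far part is controlled in $L^{p'}$ (here $p<2$ is exactly what makes $|y|^{-1}\mathbf{1}_{|y|>1}\in L^{p'}$), while the near part is controlled by the local $L^{q}$ norm of $\omega$ for large $q$, finite by John--Nirenberg; this yields $\|v\|_{L^\infty}\lesssim\|\omega\|_{\BMOF\cap L^p}$. By Hardy--Littlewood--Sobolev one also gets $v\in L^{p^{*}}$, $p^{*}=\frac{2p}{2-p}$, with $\|v\|_{L^{p^{*}}}\lesssim\|\omega\|_{L^p}$.

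The $L^p$ bound and the limit are the easy part. For $h$ one writes $\|h\|_{L^p}\le\|\nabla\chi_R\|_{L^\infty}\|v\mathbf{1}_{|x|\sim R}\|_{L^p}$ and applies H\"older on the annulus together with $v\in L^{p^{*}}$: since the annulus has area $\sim R^{2}$ and $\tfrac1p-\tfrac1{p^{*}}=\tfrac12$, the factor $R$ cancels $\|\nabla\chi_R\|_{L^\infty}\sim R^{-1}$, giving $\|h\|_{L^p}\lesssim\|\nabla\chi\|_{L^\infty}\|\omega\|_{L^p}$ uniformly in $R$; replacing $\|v\|_{L^{p^{*}}}$ by the tail $\|v\mathbf{1}_{|x|\ge cR}\|_{L^{p^{*}}}\to0$ shows $\|h\|_{L^p}\to0$. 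For $\chi_R\omega$ I would note $(\chi_R-1)\omega\to0$ in $L^p$ by dominated convergence (as $\chi_R\to1$ pointwise), which together with $h\to0$ gives the stated convergence $\textnormal{rot}(\chi_R v)\to\omega$ in $L^p$.

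The core of the proof is the $\BMOF$ estimate for $\chi_R\omega$; here I exploit that $\chi_R$ varies slowly (Lipschitz constant $R^{-1}\|\nabla\chi\|_{L^\infty}$) and freeze it at a reference point, writing $\omega_B=\fint_B\omega$. For the plain $\BMO$ seminorm, over $B=B(x_0,r)$ I compare $\chi_R\omega$ with the constant $\chi_R(x_0)\,\omega_B$ and split the remainder as $(\chi_R-\chi_R(x_0))(\omega-\omega_B)+(\chi_R-\chi_R(x_0))\,\omega_B$; the first piece costs $\tfrac rR\|\nabla\chi\|_{L^\infty}\|\omega\|_{\BMO}$, while in the second the dangerous average $|\omega_B|$ is controlled by the logarithmic growth \eqref{eq1} against the unit ball, $|\omega_B|\lesssim\|\omega\|_{L^p}+\ln(1+\tfrac1r)\|\omega\|_{\BMO}$, so that the crucial product $r\,|\omega_B|$ stays bounded because $r\ln(1+\tfrac1r)\lesssim1$ on $(0,1]$. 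For the second part of the $\BMOF$ norm, over a pair $2B_2\subset B_1$ with centre $x_1$ of $B_1$, I decompose
\[
\fint_{B_2}\chi_R\omega-\fint_{B_1}\chi_R\omega=\chi_R(x_1)\big(\omega_{B_2}-\omega_{B_1}\big)+\Big(\fint_{B_2}-\fint_{B_1}\Big)\big[(\chi_R-\chi_R(x_1))(\omega-\omega_{B_1})\big]+\omega_{B_1}\Big(\fint_{B_2}\chi_R-\fint_{B_1}\chi_R\Big).
\]
The first term is $\le\|\chi\|_{L^\infty}F(\cdots)\|\omega\|_{\BMOF}$; the middle one is handled as in \eqref{eq1}, the oscillation $\fint_{B_2}|\omega-\omega_{B_1}|\lesssim F(\cdots)\|\omega\|_{\BMOF}$ absorbing the denominator; the last uses $|\omega_{B_1}|\lesssim\|\omega\|_{L^p}+\ln(1+\tfrac1{r_1})\|\omega\|_{\BMO}$ and the slow variation $|\fint_{B_2}\chi_R-\fint_{B_1}\chi_R|\lesssim\tfrac{r_1}{R}\|\nabla\chi\|_{L^\infty}$, so that $r_1|\omega_{B_1}|$ is again tamed by $r_1\ln(1+\tfrac1{r_1})\lesssim1$. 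Dividing by $F\ge1$ leaves $\lesssim(\|\chi\|_{L^\infty}+\|\nabla\chi\|_{L^\infty})\|\omega\|_{\BMOF\cap L^p}$.

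Finally $h$ is controlled in $\BMOF$ by brute force using $v\in L^\infty$: since $\|h\|_{L^\infty}\le\|\nabla\chi_R\|_{L^\infty}\|v\|_{L^\infty}\lesssim R^{-1}\|\nabla\chi\|_{L^\infty}\|\omega\|_{\BMOF\cap L^p}$, the embedding $L^\infty\hookrightarrow\BMOF$ from Proposition \ref{proptop}(i) gives $\|h\|_{\BMOF}\lesssim\|\nabla\chi\|_{L^\infty}\|\omega\|_{\BMOF\cap L^p}$ (and in fact $\to0$). Summing the three contributions yields the claimed inequality. The main obstacle is precisely the $\BMOF$ estimate of $\chi_R\omega$: multiplication by a bounded Lipschitz cut-off is not innocuous on $\BMO$-type spaces, and the naive bound $\fint_{B_2}|(\chi_R-\chi_R(x_1))\omega|\lesssim\tfrac{r_1}{R}\fint_{B_2}|\omega|$ fails because $\fint_{B_2}|\omega|$ may grow like $r_2^{-2/p}$, equivalently like $\ln\tfrac1{r_2}$, at the small scale $r_2\ll r_1$; the resolution is the subtraction of $\omega_{B_1}$ above, which replaces the unbounded $r_1\ln\tfrac1{r_2}$ by the bounded $r_1\ln\tfrac1{r_1}$ and lets the weight $F$ absorb the genuinely oscillatory remainder.
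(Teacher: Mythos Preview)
Your proof is correct and follows essentially the same route as the paper: the identical splitting $\textnormal{rot}(\chi_R v)=\chi_R\omega+R^{-1}\nabla^\perp\chi(\cdot/R)\cdot v$, the same use of $\|v\|_{L^\infty}\lesssim\|\omega\|_{BMO\cap L^p}$ and $L^\infty\hookrightarrow\BMOF$ for the lower-order term, the same Hardy--Littlewood--Sobolev argument for the $L^p$ bound and convergence, and the same freeze-and-subtract strategy for the $\BMOF$ estimate of $\chi_R\omega$.

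The only substantive variation is in how you control the average $|\omega_{B_1}|$ appearing in the cross term $\omega_{B_1}\big(\fint_{B_2}\chi_R-\fint_{B_1}\chi_R\big)$. The paper bounds it by H\"older against $L^{2p}$ (finite by $L^p$--$BMO$ interpolation), obtaining $|\omega_{B_1}|\lesssim r_1^{-1/p}\|\omega\|_{L^{2p}}$ and using $r_1\cdot r_1^{-1/p}=r_1^{1-1/p}\le 1$ for $p>1$; you instead compare $\omega_{B_1}$ with the unit-ball average via \eqref{eq1}, getting $|\omega_{B_1}|\lesssim\|\omega\|_{L^p}+\ln(1+\tfrac1{r_1})\|\omega\|_{BMO}$ and using $r_1\ln\tfrac1{r_1}\lesssim1$. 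Both are standard and equivalent here; your version is perhaps more intrinsic to the $BMO$ setting, while the paper's makes the dependence on $p>1$ more visible. Your three-term decomposition for the second part of the norm is an algebraic rearrangement of the paper's four-term splitting $\hbox{I}_1$--$\hbox{I}_4$; the content is the same. One small point you leave implicit in the middle term: you should also record that $|\chi_R(x)-\chi_R(x_1)|\lesssim r_1/R$ for $x\in B_2\subset B_1$, so that after the $F$'s cancel a bounded factor remains.
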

\begin{proof}
It is obvious that  the term $\textnormal{rot}\big(\chi\big(\frac{\cdot}{R}\big) v\big)$ can be splitted into
\begin{equation}\label{rotxv}
\textnormal{rot}\Big(\chi\big(\frac{x}{R}\big) v\Big)(x)= \chi\big(\frac{x}{R}\big)\omega(x)+\frac{1}{R}\nabla^\perp\chi\big(\frac{x}{R}\big)v(x).
\end{equation}
By H\"{o}lder inequality, it follows that
\begin{eqnarray}\label{rotlp}
\Big\Vert\textnormal{rot}\Big(\chi\big(\frac{\cdot}{R}\big) v\Big)\Big\Vert_{L^p} &\leq & \Vert\chi\Vert_{L^\infty}\Vert\omega\Vert_{L^p}+\frac{1}{R}\big\Vert\nabla\chi\big(\frac{\cdot}{R}\big)\big\Vert_{L^2}\Vert v\Vert_{L^\frac{2p}{2-p}}.
\end{eqnarray}
In view of the Biot-Savart law and  the classical Hardy-Littlewood- Sobolev inequality we have
\begin{equation*}
\Vert v\Vert_{L^\frac{2p}{2-p}}\lesssim \Vert\omega\Vert_{L^p}.
\end{equation*}
Inserting this in (\ref{rotlp}) we conclude that
$$
\Big\Vert\textnormal{rot}\Big(\chi\big(\frac{\cdot}{R}\big) v\Big)\Big\Vert_{L^p} \leq  \big(\Vert\chi\Vert_{L^\infty}+\Vert\nabla\chi\Vert_{L^\infty}\big)\Vert\omega\Vert_{L^p}.
$$
For the $BMO$ part  of the norm we use \eqref{rotxv} combined with the embedding  $L^\infty\hookrightarrow  BMO$, 
\begin{eqnarray}\label{int}
\Big\Vert\textnormal{rot}\Big(\chi\big(\frac{\cdot}{R}\big) v\Big)\Big\Vert_{BMO}&\leq &\big\Vert \chi\big(\frac{\cdot}{R}\big)\omega\big\Vert_{BMO}+\frac{1}{R}\Vert\nabla^\perp\chi\big(\frac{\cdot}{R}\big)\cdot v\Vert_{BMO}\notag\\ &\lesssim & \big\Vert \chi\big(\frac{\cdot}{R}\big)\omega\big\Vert_{BMO}+\frac{1}{R}\Vert\nabla\chi\|_{L^\infty}\|v\|_{L^\infty}.\end{eqnarray}
As $1<p<2$, the Biot-Savart law ensures that
\begin{equation*}
\Vert v\Vert_{L^\infty}\lesssim  \Vert \omega\Vert_{L^p\cap L^{2p}}.
\end{equation*}
Recall the classical result  of interpolation, see \cite{G2} for instance:
\begin{equation}\label{inter}
 \Vert \omega\Vert_{L^q}\lesssim  \Vert \omega\Vert_{L^p}^{\frac{p}{q}} \Vert \omega\Vert_{BMO}^{1-\frac{p}{q}}\quad\forall q\in[p,+\infty[.
\end{equation}
Combining the preceding two  inequalities we get 
\begin{eqnarray}\label{bint}
\Vert v\Vert_{L^\infty}&\lesssim &\Vert \omega\Vert_{L^p}^{\frac{1}{2}} \Vert \omega\Vert_{BMO}^{\frac{1}{2}}+\Vert \omega\Vert_{L^p}\notag\\ & \lesssim &  \Vert \omega\Vert_{BMO\cap L^p}.
\end{eqnarray}
To estimate the first term of  the right-hand side term of inequality \eqref{int}   we write 
\begin{eqnarray*}
 \chi\big(\frac{x}{R}\big)\omega(x)-\fint_B  \chi\big(\frac{y}{R}\big)\omega(y)dy &=&\omega(x)\bigg( \chi\big(\frac{x}{R}\big)-\fint_B  \chi\big(\frac{y}{R}\big)dy\bigg)\\
 &+& \bigg(\fint_B  \chi\big(\frac{y}{R}\big)dy\bigg)\bigg(\omega(x)-\fint_B \omega(y)dy\bigg)\\ &+&\fint_B \omega(y)\bigg(\Big(\fint_B  \chi\big(\frac{z}{R}\big)dz\Big)- \chi\big(\frac{y}{R}\big)\bigg)dy.
\end{eqnarray*}
Hence,
\begin{eqnarray*}
\fint_B\Big\vert  \chi\big(\frac{x}{R}\big)\omega(x)dx-\fint_B \chi\big(\frac{y}{R}\big)\omega(y)dy\Big\vert &\leq & 2\fint_{B}\big\vert\omega(x)\big\vert\Big\vert \chi\big(\frac{x}{R}\big) -\fint_{B}\chi\big(\frac{y}{R}\big)dy\Big\vert dx \\ &+&\Big\vert\fint_B \chi\big(\frac{x}{R}\big)dx\Big\vert \fint_{B}\Big\vert \omega-\fint_{B}\omega\Big\vert \\ &\lesssim  &\fint_{B}\fint_{B}\big\vert \omega(x)\big\vert\Big\vert \chi\big(\frac{x}{R}\big) -\chi\big(\frac{y}{R}\big)\Big\vert dy dx+\Vert \chi\Vert_{L^\infty}\Vert \omega\Vert_{BMO}.
\end{eqnarray*}
By the mean value theorem, we get
\begin{eqnarray*}
\fint_{B}\fint_{B}\big\vert \omega(x)\big\vert\Big\vert \chi\big(\frac{x}{R}\big) -\chi\big(\frac{y}{R}\big)\Big\vert dy dx &\leq & \Vert \nabla\chi\Vert_{L^\infty}\fint_{B}\fint_{B}\big\vert \omega(x)\big\vert\Big\vert \frac{x-y}{R}\Big\vert dy dx \\ &\lesssim & r \Vert \nabla\chi\Vert_{L^\infty} \fint_{B}\big\vert \omega(x)\big\vert dx.
\end{eqnarray*}
Using  H\"{o}lder inequality, the facts $p>1$, $r<1$,  and inequality (\ref{inter})  we find
\begin{eqnarray*}
\fint_{B}\fint_{B}\big\vert \omega(x)\big\vert\Big\vert \chi\big(\frac{x}{R}\big) -\chi\big(\frac{y}{R}\big)\Big\vert dy dx &\lesssim & r^{1-1/p} \Vert \nabla\chi\Vert_{L^\infty}\Vert \omega\Vert_{L^{2p}}  \\ &\lesssim &  \Vert \nabla\chi\Vert_{L^\infty} \Vert \omega\Vert_{BMO\cap L^p}.
\end{eqnarray*}
Concerning the the second term of the norm in $BMO_F$ we start with the identity,
\begin{eqnarray*}
\fint_{B_2} \chi\big(\frac{x}{R}\big) \omega(x)dx-\fint_{B_1} \chi\big(\frac{y}{R}\big) \omega(y)dy &=&\fint_{B_2} \chi\big(\frac{x}{R}\big)\bigg(\omega(x)-\fint_{B_2} \omega(y)dy\bigg)\\ &+&\bigg(\fint_{B_2} \chi\big(\frac{x}{R}\big)dx\bigg) \bigg(\fint_{B_2} \omega(y)dy-\fint_{B_1} \omega(y)dy\bigg)\\ &+&\bigg(\fint_{B_1} \omega(y)dy\bigg)\bigg(\fint_{B_2}\chi\big(\frac{x}{R}\big)dx-\fint_{B_1}\chi\big(\frac{x}{R}\big)dx\bigg)\\ &+&\fint_{B_1} \chi\big(\frac{x}{R}\big)\bigg(\Big(\fint_{B_1}\omega(y)dy\Big)- \omega(x)\bigg)dx\\
&\triangleq&\hbox{I}_1+\hbox{I}_2+\hbox{I}_3+\hbox{I}_4.\notag
\end{eqnarray*}
Using the definition of $BMO_F$ space we get
$$
\frac{|\hbox{I}_1|+|\hbox{I}_2|+|\hbox{I}_4|}{F\Big(\dfrac{1-\ln r_{2}}{1-\ln r_{1}}\Big)}\le \|\chi\|_{L^\infty}\|\omega\|_{BMO_F}.
$$
For the remainder term $\hbox{I}_3$  we proceed as follows: we take $x_0\in B_1\cap B_2$ and we use the main value Theorem with $F\geq1$ 
\begin{eqnarray*}
\frac{|\hbox{I}_3|}{F\Big(\dfrac{1-\ln r_{2}}{1-\ln r_{1}}\Big)}&\lesssim& r_1^{-\frac1p}\|\omega\|_{L^{2p}}\Big(\fint_{B_2}\Big|\chi\big(\frac{x}{R}\big)-\chi(\frac{x_0}{R})\Big|dx+\fint_{B_1}\Big|\chi\big(\frac{x}{R}\big)-\chi(\frac{x_0}{R})\Big|dx\Big)\\&\lesssim& r_1^{-\frac1p}\|\omega\|_{L^{2p}}\|\nabla \chi\|_{L^\infty}\big(r_2/R+r_1/R\big)\\
&\lesssim& \|\omega\|_{BMO\cap L^{p}}\|\nabla \chi\|_{L^\infty}.
\end{eqnarray*}
 We have now to check  the strong convergence in $L^p$ space.  By considering the identity (\ref{rotxv}), H\"{o}lder inequality and the fact that $\chi\equiv 1$ in the neighborhood of the unit ball,
\begin{eqnarray*}
\Big\Vert\textnormal{rot}\Big(\chi\big(\frac{\cdot}{R}\big) v\Big)-\omega\Big\Vert_{L^p} &\leq & \big\Vert\chi\big(\frac{\cdot}{R}\big)-1\big\Vert_{L^\infty}\Vert\omega\Vert_{L^{p}(B^c(0,R))}+\frac{1}{R}\Vert\nabla^\perp\chi\big(\frac{\cdot}{R}\big)\Vert_{L^2}\Vert v\Vert_{L^\frac{2p}{2-p}(B^c(0,R))}\\ &\lesssim &\Vert\omega\Vert_{L^{p}(B^c(0,R))}+\Vert\nabla\chi\Vert_{L^\infty}\Vert v\Vert_{L^{\frac{2p}{2-p}}(B^c(0,R))}.
\end{eqnarray*}
Passing to the limit completes the proof of the desired result. 
\end{proof}
\subsection{$LMO_F$ spaces and law products} Here we endeavor to define a functional space whose elements can be served as  pointwise multipliers for $\BMOF$ space.   In this context, we point out  that $BMO$ is stable by multiplication by $LMO\cap L^\infty$ functions, we refer to  \cite{N-Y} for the proof. Where $LMO$ is a subspace of $BMO$, not comparable to $L^\infty$ and equipped with the semi-norm
 $$
\Vert f\Vert_{LMO}\triangleq\sup_{B}\vert\ln r\vert\fint_{B}\Big\vert f -\fint_{B}f\Big\vert.
$$
In this definition $B$ runs over the balls of radius lesser than $1.$
In order to validate a similar result for the $BMO_F$ space, we have to define the following function space. 
\begin{definition}
Let $F$ be in the class $\mathcal{F}$ and $f\in L^{1}_{loc}(\RR^2,\RR)$, we say that $f$ belongs to the $\LMOF$ space if
$$
\Vert f\Vert_{\LMOF}=\sup_{\underset{r\leq 1}{B}}F(1-\ln r)\fint_{B}\Big\vert f -\fint_{B}f\Big\vert+\sup_{\underset{r_1\leq 1}{2B_{2}\subset B_{1}}}F(1- \ln r_1)\Big\vert\fint_{B_{2}}f-\fint_{B_{1}}f\Big\vert<+\infty, 
$$
\end{definition}
We shall prove the following law product.
\begin{proposition}\label{prod}
Let $g\in\BMOF\cap L^p$, with $1\leq p<\infty$, and $f\in \LMOF\cap L^{\infty}$. Then $fg\in\BMOF \cap L^p$ and
$$
\Vert fg\Vert_{\BMOF}\leq C\Vert f\Vert_{\LMOF\cap L^{\infty}}\Vert g\Vert_{\BMOF \cap L^p},
$$
where $C$ is independent of $f$ and $g$.
\end{proposition}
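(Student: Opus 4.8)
The plan is to estimate the two pieces of the $\BMOF\cap L^p$ norm of $fg$ separately. The $L^p$ bound is immediate from H\"older, $\|fg\|_{L^p}\le\|f\|_{L^\infty}\|g\|_{L^p}$, so the whole difficulty lies in the $\BMOF$ seminorm, which itself splits into the plain $\BMO$ part and the $F$-weighted oscillation part. Throughout I would reduce to balls of radius $r\le1$: for a ball $B$ of radius $r\ge1$ one has $\fint_B|fg-(fg)_B|\le 2\|f\|_{L^\infty}\fint_B|g|\le C\|f\|_{L^\infty}\|g\|_{L^p}$ by Remark \ref{rq5}, already within the claimed bound.

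For a ball $B=B(x_0,r)$ with $r\le1$, writing $f_B=\fint_Bf$ and $g_B=\fint_Bg$, the elementary identity gives
\[
fg-(fg)_B=\big[(f-f_B)(g-g_B)-\textstyle\fint_B(f-f_B)(g-g_B)\big]+f_B(g-g_B)+g_B(f-f_B).
\]
The first bracket has average modulus $\le C\|f\|_{L^\infty}\fint_B|g-g_B|\le C\|f\|_{L^\infty}\|g\|_{\BMO}$ since $|f-f_B|\le2\|f\|_{L^\infty}$, and $f_B(g-g_B)$ is likewise $\le\|f\|_{L^\infty}\|g\|_{\BMO}$. The only delicate term is $g_B(f-f_B)$, and this is where the main obstacle appears: the bare average $g_B$ is not uniformly bounded — for small balls it may grow like $r^{-2/p}$ — so one cannot simply pull $\|f\|_{L^\infty}$ out.

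The resolution, which is the crux of the argument, is to compare $g_B$ with its value on the fixed reference ball $B^*=B(x_0,1)$ of radius one. On the one hand $|g_{B^*}|\le\fint_{B^*}|g|\le C\|g\|_{L^p}$ is uniformly bounded; on the other hand, for $r\le\tfrac12$ one has $2B\subset B^*$, so the very definition of $\BMOF$ yields $|g_B-g_{B^*}|\le F(1-\ln r)\,\|g\|_{\BMOF}$, while the definition of $\LMOF$ gives the matching decay $\fint_B|f-f_B|\le\|f\|_{\LMOF}/F(1-\ln r)$. Hence
\[
|g_B|\,\fint_B|f-f_B|\le\big(C\|g\|_{L^p}+F(1-\ln r)\|g\|_{\BMOF}\big)\frac{\|f\|_{\LMOF}}{F(1-\ln r)}\le C\|f\|_{\LMOF}\|g\|_{\BMOF\cap L^p},
\]
the growth $F(1-\ln r)$ being exactly cancelled by the decay $1/F(1-\ln r)$; the range $\tfrac12<r\le1$ is trivial since there $|g_B|\le Cr^{-2/p}\|g\|_{L^p}\le C\|g\|_{L^p}$ is already bounded. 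This controls the $\BMO$ part of $fg$.

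For the $F$-weighted part I would estimate $|(fg)_{B_2}-(fg)_{B_1}|$ for $2B_2\subset B_1$, $r_1\le1$, using $\fint_{B_i}fg=\fint_{B_i}(f-f_{B_i})(g-g_{B_i})+f_{B_i}g_{B_i}$. The oscillation remainders contribute at most $C\|f\|_{L^\infty}\|g\|_{\BMO}$ (divided by $F\ge1$, harmless), and the product of averages is split as $f_{B_2}g_{B_2}-f_{B_1}g_{B_1}=f_{B_2}(g_{B_2}-g_{B_1})+g_{B_1}(f_{B_2}-f_{B_1})$. The first summand is $\le\|f\|_{L^\infty}\|g\|_{\BMOF}\,F\big(\tfrac{1-\ln r_2}{1-\ln r_1}\big)$ directly from $g\in\BMOF$. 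For the second I would again insert the reference ball $B_1^*=B(x_1,1)$, writing $g_{B_1}=g_{B_1^*}+(g_{B_1}-g_{B_1^*})$: the bounded part $|g_{B_1^*}|\le C\|g\|_{L^p}$ pairs with $|f_{B_2}-f_{B_1}|\le\|f\|_{\LMOF}/F(1-\ln r_1)$, and the part $|g_{B_1}-g_{B_1^*}|\le F(1-\ln r_1)\|g\|_{\BMOF}$ again cancels exactly against that same decay, leaving $C\|f\|_{\LMOF}\|g\|_{\BMOF\cap L^p}$ after dividing by $F\big(\tfrac{1-\ln r_2}{1-\ln r_1}\big)\ge1$. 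Summing the two parts yields the stated product estimate. The one point to treat with care is the borderline radii $r\in(\tfrac12,1]$ where $2B\not\subset B^*$, but there the crude $L^p$ bound on averages makes every term elementary.
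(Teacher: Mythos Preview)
Your proof is correct and follows essentially the same strategy as the paper's. The algebraic decompositions differ cosmetically---you expand via $(f-f_B)(g-g_B)+f_B(g-g_B)+g_B(f-f_B)$ while the paper uses the asymmetric identity $fg-(fg)_B=f(g-g_B)+g_B(f-f_B)+\fint_B f(g_B-g)$---but both isolate the same delicate term $g_B\cdot(\text{oscillation of }f)$, and both resolve it by the identical reference-ball trick: comparing $g_B$ with $g_{B^*}$ on the concentric unit ball to convert the unbounded average into $\|g\|_{L^p}+F(1-\ln r)\|g\|_{\BMOF}$, the growth factor then cancelling against the $\LMOF$ decay $F(1-\ln r)^{-1}$ of the $f$-oscillation. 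The treatment of the $F$-weighted part is likewise the same in substance, with the paper's four-term identity \eqref{id2} and your two-step split $f_{B_2}g_{B_2}-f_{B_1}g_{B_1}=f_{B_2}(g_{B_2}-g_{B_1})+g_{B_1}(f_{B_2}-f_{B_1})$ producing the same terms.
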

\begin{proof}
In view Remark \ref{rq5} we will consider throughout the proof $B$ a ball of radius $r<\frac{1}{4}$. We start with writing the following plain identity
\begin{equation}\label{id}
fg-\fint_B fg=f\bigg(g-\fint_B g\bigg)+\bigg(\fint_B g\bigg)\bigg(f-\fint_B f\bigg)+\fint_B \Big\{f\Big(\big(\fint_B g\big)-g\Big)\Big\} ,
\end{equation}
which gives in turn
\begin{eqnarray}\label{bmoest}
\nonumber\fint_B\Big\vert fg-\fint_B fg\Big\vert &\leq & 2\fint_{B}\big\vert f\big\vert\Big\vert g -\fint_{B}g\Big\vert+\Big\vert\fint_B g\Big\vert \fint_{B}\Big\vert f -\fint_{B}f\Big\vert\\ &\lesssim & \Vert f\Vert_{L^{\infty}}\Vert g\Vert_{\BMO}+\Big\vert\fint_B g\Big\vert \fint_{B}\Big\vert f -\fint_{B}f\Big\vert.
\end{eqnarray}
We denote by $\hat{B}$ the ball which is concentric to $B$ and whose radius is equal to $1$.
 According to the definition of the second part of the  $BMO_F$ norm we get
\begin{eqnarray}\label{eqprod}
\Big\vert\fint_B g\Big\vert &\leq & \Big\vert\fint_{B}g -\fint_{\hat{B}}g\Big\vert+\Big\vert\fint_{\hat{ B}}g\Big\vert\notag \\ &\lesssim & F(1-\ln r)\Vert g\Vert_{\BMOF}+\Vert g\Vert_{L^p}.
\end{eqnarray}
It follows that
$$
 \Big\vert\fint_B g\Big\vert \fint_{B}\big\vert f -\fint_{B}f\big\vert \lesssim \Vert g\Vert_{\BMOF\cap L^p}\Vert f\Vert_{\LMOF}.
 $$
Inserting this in \eqref{bmoest}   we find
  $$
 \|f g\|_{BMO}\le C \Vert g\Vert_{\BMOF\cap L^p}\Vert f\Vert_{\LMOF\cap L^\infty}.
 $$
For the second term of the $BMO_F$-norm we will make use of  the following identity
\begin{eqnarray}\label{id2}
\fint_{B_2} fg-\fint_{B_1} fg &=&\fint_{B_2} f\bigg(g-\fint_{B_2} g\bigg)+\bigg(\fint_{B_2} f\bigg) \bigg(\fint_{B_2} g-\fint_{B_1} g\bigg)\\ &+&\bigg(\fint_{B_1} g\bigg)\bigg(\fint_{B_2}f-\fint_{B_1}f\bigg) +\fint_{B_1} f\bigg(\Big(\fint_{B_1}g\Big)- g\bigg).\notag
\end{eqnarray}
 Since $F$ is larger than 1 we may write
$$
\dfrac{\vert\fint_{B_2} fg-\fint_{B_1}fg\vert}{F\Big(\dfrac{1-\ln r_{2}}{1-\ln r_{1}}\Big)} \leq \hbox{ I+II+III+IV},
$$
where
$$
\hbox{I}\triangleq\Big\vert\fint_{B_2}f\big(g-\fint_{B_2}g\big)\Big\vert,
$$
$$
\hbox{II}\triangleq\dfrac{\big\vert\fint_{B_2}f\big\vert\big\vert\fint_{B_2}g-\fint_{B_1}g\big\vert}{F\Big(\dfrac{1-\ln r_{2}}{1-\ln r_{1}}\Big)},
$$
$$
\hbox{III}\triangleq\Big\vert\fint_{B_1}g\Big\vert\Big\vert\fint_{B_2}f-\fint_{B_1}f\Big\vert,
$$
$$
\hbox{IV}\triangleq\Big\vert\fint_{B_1}f(g-\fint_{B_1}g)\Big\vert.
$$

It is  clear that $\hbox{I}$ and $\hbox{IV}$ are bounded by $\Vert f\Vert_{L^{\infty}}\Vert g\Vert_{\BMO}$ and  $\hbox{II}$ is bounded by $\Vert f\Vert_{L^{\infty}}\Vert g\Vert_{\BMOF}$. It remains to estimate $\hbox{III}$. Reproducing the same argument used in \eqref{eqprod} we get
\begin{eqnarray*}
 \hbox{III}&\leq & \Vert g \Vert_{\BMOF\cap L^p}F(1- \ln r_1)\Big\vert\fint_{B_{2}}f-\fint_{B_{1}}f\Big\vert\\ & \leq &  \Vert g \Vert_{\BMOF\cap L^p}\Vert f\Vert_{\LMOF}.
\end{eqnarray*}
This completes the proof of the proposition.
\end{proof}
The next proposition deals with some useful properties of the $\LMOF$ space.
\begin{proposition}\label{alg}
\begin{enumerate}
\item The $\LMOF\cap L^{\infty}$ space  is an algebra. More precisely,
 there exists an absolute constant $C$ such that for any  $f,g\in \LMOF\cap L^{\infty}$ one has
$$
\Vert fg\Vert_{\LMOF\cap L^{\infty}}\leq C\big(\Vert f\Vert_{L^{\infty}}\Vert g\Vert_{\LMOF}+\Vert g\Vert_{L^{\infty}}\Vert f\Vert_{\LMOF}\big).
$$
\item Let $f:\RR\longrightarrow\RR$ be an entire real-function, which vanishes at 0. For any real-valued function $u$ in $LMO_F\cap L^\infty$, the function $f \circ u$ belongs to the same space. Moreover, there exists a positive constant $C$ and an entire real-function $g$ such that we have
$$
\Vert f\circ u\Vert_{\LMOF\cap L^\infty}\leq C\Vert u\Vert_{\LMOF} g\big(\Vert u\Vert_{L^{\infty}}\big).
$$
\item For any $s>0$ we have the embedding $C^s\hookrightarrow \LMOF$,  that is, there exits $C>0$ such that for any $f\in  C^s$,
$$
\Vert f\Vert_{\LMOF}\leq C \Vert f\Vert_{C^s}.
$$
 \end{enumerate}
\end{proposition}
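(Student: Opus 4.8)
The plan is to handle the three assertions in turn, each time reducing everything to the two building blocks of the $\LMOF$ semi-norm: the weighted oscillation $F(1-\ln r)\fint_B|f-\fint_B f|$ and the weighted two-ball difference $F(1-\ln r_1)|\fint_{B_2}f-\fint_{B_1}f|$. For the algebra property (i) I would reuse verbatim the algebraic identities \eqref{id} and \eqref{id2} from the proof of Proposition \ref{prod}, but now weight them by $F(1-\ln r)$ (resp. $F(1-\ln r_1)$) and estimate the averages of both factors simply by their $L^\infty$ norms rather than by the $\BMOF$ bound \eqref{eqprod}. Concretely, \eqref{id} gives
$$
F(1-\ln r)\fint_B\Big|fg-\fint_B fg\Big|\leq 2\Vert f\Vert_{L^\infty}\,F(1-\ln r)\fint_B\Big|g-\fint_B g\Big|+\Big|\fint_B g\Big|\,F(1-\ln r)\fint_B\Big|f-\fint_B f\Big|,
$$
and bounding $|\fint_B g|\leq\Vert g\Vert_{L^\infty}$ controls the right-hand side by $\Vert f\Vert_{L^\infty}\Vert g\Vert_{\LMOF}+\Vert g\Vert_{L^\infty}\Vert f\Vert_{\LMOF}$. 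For the two-ball term I would expand $\fint_{B_2}fg-\fint_{B_1}fg$ through \eqref{id2}: the four pieces are each handled by pulling out an $L^\infty$ norm of one factor and recognising in the remaining factor either a two-ball difference of the other factor (weighted by $F(1-\ln r_1)$, hence $\leq\Vert\cdot\Vert_{\LMOF}$) or an oscillation on $B_2$ (weighted by $F(1-\ln r_2)\geq F(1-\ln r_1)$, where the monotonicity of $F$ together with $r_2\leq r_1$ is exactly what lets me upgrade the weight and close the bound). Together with $\Vert fg\Vert_{L^\infty}\leq\Vert f\Vert_{L^\infty}\Vert g\Vert_{L^\infty}$ this yields the algebra inequality.

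For the composition (ii) I would write the entire function as $f(z)=\sum_{n\geq1}a_n z^n$ (no $n=0$ term since $f(0)=0$), so that $f\circ u=\sum_{n\geq1}a_n u^n$. Iterating the estimate of (i) gives, by induction on $n$, a bound of the form
$$
\Vert u^n\Vert_{\LMOF}\leq n\,(2C)^{n-1}\Vert u\Vert_{L^\infty}^{n-1}\Vert u\Vert_{\LMOF},
$$
together with $\Vert u^n\Vert_{L^\infty}\leq\Vert u\Vert_{L^\infty}^n$. Summing the series then produces
$$
\Vert f\circ u\Vert_{\LMOF\cap L^\infty}\leq\Vert u\Vert_{\LMOF}\sum_{n\geq1}|a_n|\,n\,(2C)^{n-1}\Vert u\Vert_{L^\infty}^{n-1},
$$
and since $f$ is entire its coefficients decay faster than any geometric sequence, so the majorising series defines an entire function $g$ with non-negative coefficients, evaluated at $\Vert u\Vert_{L^\infty}$; this is precisely the claimed estimate. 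I expect this to be the main obstacle: one must keep the inductive constant explicit enough to verify that the majorising power series still has infinite radius of convergence, and this is exactly where entireness of $f$ is indispensable, since it defeats the geometric factor $(2C)^n$ generated by the repeated application of (i).

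Finally, for the embedding (iii) I would use that for $s\in\,]0,1[$ a function $f\in C^s$ satisfies $\fint_B|f-\fint_B f|\leq\fint_B\fint_B|f(x)-f(y)|\,dx\,dy\lesssim\Vert f\Vert_{C^s}\,r^s$ on a ball of radius $r$, and similarly $|\fint_{B_2}f-\fint_{B_1}f|\lesssim\Vert f\Vert_{C^s}\,r_1^s$ whenever $2B_2\subset B_1$ (because then every pair $x\in B_2,\,y\in B_1$ obeys $|x-y|<2r_1$). It then only remains to check that both weights stay bounded, i.e. that $\sup_{0<r\leq1}F(1-\ln r)\,r^s<+\infty$; this follows at once from the polynomial growth $F(x)\leq Cx^\alpha$ recorded in \eqref{eq}, since $r^s(1-\ln r)^\alpha\to0$ as $r\to0$ and stays bounded on $]0,1]$. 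Multiplying the two oscillation bounds by their respective weights therefore gives $\Vert f\Vert_{\LMOF}\lesssim\Vert f\Vert_{C^s}$, completing the proof.
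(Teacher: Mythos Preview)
Your proposal is correct and follows essentially the same route as the paper: for (i) you reuse the identities \eqref{id}--\eqref{id2} weighted by $F(1-\ln r)$ (resp.\ $F(1-\ln r_1)$) and bound the averages by $L^\infty$ norms, exploiting the monotonicity of $F$ for the $B_2$ oscillation term; for (ii) you expand $f$ as a power series and iterate (i) inductively (your bound $n(2C)^{n-1}$ is slightly weaker than the paper's $C^{n-1}$, but this is immaterial since entireness still absorbs it); for (iii) you use the H\"older oscillation bound $r^s\Vert f\Vert_{C^s}$ together with the polynomial growth \eqref{eq} of $F$, exactly as the paper does.
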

\textbf{Proof}
${\bf{(i)}}$
Making appeal to inequality (\ref{bmoest}) we get
\begin{eqnarray*}
F(1-\ln r)\fint_B\Big\vert fg-\fint_B fg\Big\vert &\leq & 2F(1-\ln r)\fint_{B}\big\vert f\big\vert\Big\vert g -\fint_{B}g\Big\vert+F(1-\ln r)\Big\vert\fint_B g\Big\vert \fint_{B}\Big\vert f -\fint_{B}f\Big\vert\\ &\leq & 2\Vert f\Vert_{L^{\infty}}\Vert g\Vert_{\LMOF}+\Vert g\Vert_{L^{\infty}}\Vert f\Vert_{\LMOF}.
\end{eqnarray*}
Likewise, as $r_2\leq r_1$ and $F$ is a nondecreasing function, we immediately deduce from  identity (\ref{id2}) that
\begin{eqnarray*}
F(1- \ln r_1)\Big\vert\fint_{B_{2}}fg-\fint_{B_{1}}fg\Big\vert &\leq & 
F(1-\ln r_2)\fint_{B_2}\big\vert f\big\vert\Big\vert g -\fint_{B_2}g\Big\vert\\
&+&F(1- \ln r_1)\fint_{B_2}\big\vert f\big\vert \Big\vert \fint_{B_2}g-\fint_{B_1}g\Big\vert\bigg)\\ &+& F(1-\ln r_1)\bigg(\fint_{B_1}\big\vert g\big\vert \Big\vert \fint_{B_2} f -\fint_{B_1}f\Big\vert+\fint_{B_1}\big\vert f\big\vert\Big\vert g -\fint_{B_1}g\Big\vert\bigg)\\ &\leq &
 2\Vert f\Vert_{L^{\infty}}\Vert g\Vert_{\LMOF}+2\Vert g\Vert_{L^{\infty}}\Vert f\Vert_{\LMOF}.
\end{eqnarray*}
 This completes the proof of the assertion (i).\\
${\bf{(ii)}}$ By definition, there exists a sequence $(a_n)_{n}\subset \RR$ such that for all $x\in \RR$,
$$
f(x)=\sum_{n\geq1} a_n x^n
$$
and thus 
$$
f\circ u(x)=\sum_{n=1}^{\infty}a_n u^n(x).
$$
Consequently 
$$
\Vert f\circ u\Vert_{LMO_F\cap L^\infty} \leq \sum_{n=1}^{\infty}\vert a_n\vert\Vert u^n\Vert_{\LMOF\cap L^\infty}.
$$
According to ${\bf{(i)}}$ and using the induction principle, we infer that for all $n\geq2$ one has,
\begin{eqnarray*}
\Vert u^n\Vert_{\LMOF\cap L^\infty}\leq C^{n-1}\Vert u\Vert_{\LMOF}\Vert u\Vert_{L^\infty}^{n-1}.
\end{eqnarray*}
Therefore,
\begin{eqnarray*}
\Vert f\circ u\Vert_{\LMOF\cap L^\infty} &\leq &\Vert u\Vert_{\LMOF}\sum_{n=1}^{\infty}\vert a_n\vert C^{n-1}\Vert u\Vert_{ L^\infty}^{n-1}\\&\leq &\Vert u\Vert_{\LMOF}\sum_{n=0}^{\infty}\vert a_{n+1}\vert C^{n}\Vert u\Vert_{ L^\infty}^n\\ &\triangleq &\Vert u\Vert_{\LMOF} g\big(\Vert u\Vert_{L^\infty}\big).
\end{eqnarray*}
${\bf{(iii)}}$  Using the definition of the H\"{o}lder space, we can write
 \begin{eqnarray*}
F(1-\ln r)\fint_B\big\vert f-\fint_{B} f\big\vert &\leq & F(1-\ln r)\fint_B\fint_B\big\vert f(x)- f(y)\big\vert dx dy\\ &\lesssim & F(1-\ln r)r^s\Vert f\Vert_{C^s}\\
&\lesssim&\Vert f\Vert_{C^s}.
\end{eqnarray*}
The last inequality follows from (\ref{eq}) which implies that
$$
\lim_{r\to 0}F(1-\ln r)r^s= 0. 
$$
The second term of the norm can be handled exactly as the first one. For $x_0\in B_1\cap B_2$ we have
\begin{eqnarray*}
F(1-\ln r_1)\Big\vert\fint_{B_2} f-\fint_{B_1} f\Big\vert &\leq & F(1-\ln r_2)\fint_{B_2}\big\vert f(x)- f(x_0)\big\vert dx \\ &+& F(1-\ln r_1)\fint_{B_1}\big\vert f(x)- f(x_0)\big\vert dx\\ &\leq & \big(F(1-\ln r_2)r_{2}^s+F(1-\ln r_1)r_{1}^s\big)\Vert f\Vert_{C^s},
\end{eqnarray*}
which is bounded for the same reason as before.\\
\section{Compressible transport  model}
We focus in this section on the study of the persistence regularity of the initial data measured in $\BMOF$ space for the following compressible transport model:
\begin{equation}\label{12}
\left\{ \begin{array}{ll}
\partial_t f+ v\cdot \nabla f+f\textnormal{div}\, v=0 ; \\
f_{\vert t=0}=f_0.  \\
\end{array} \right. 
\end{equation}
This  model describes the vorticity dynamics for the system (\ref{EC}) and the advection is governed by a compressible velocity which not necessary in the Lipschitz class.  According to  the inequality \eqref{ll1} the velocity  belongs to the log-Lipschitz class and as it was revealed in the paper \cite{chemin-Bahouri} the solution  in the incompressible case may exhibit a loss of regularity in the classical spaces like  Sobolev and  H\"{o}lder spaces. This possible loss cannot occur in the $LBMO$ space as it was recently proved in  \cite{B-K}. Here we shall extend this latter result to the model \eqref{12} and we will see later an application for the incompressible limit problem.  Before stating our  main result we will  recall the log-Lipschitz norm: \\
$$
\|v\|_{LL}=\sup_{0<|x-y|<1}\frac{|v(x)-v(y)|}{|x-y|\log\frac{e}{|x-y|}}<\infty
$$
 Our result reads as follows.
\begin{theorem}\label{th2}
Let $v$ be a smooth vector field and $f$  be a smooth solution of the system $(\ref{12})$. Then, there exits an absolute constant $C>0$ such that for every $1\leq p\le\infty$,
\begin{equation*}
\Vert f(t)\Vert_{\BMOF\cap L^p}\leq C\Vert f_0\Vert_{\BMOF\cap L^p}e^{C\Vert \textnormal{div}\, v\Vert_{L^1_tL^{\infty}}}F\big(e^{CV(t)}\big)\Big(1+F\big(e^{CV(t)}\big)\Vert\textnormal{div}\, v\Vert_{L^1_t(\LMOF\cap L^\infty)}\Big).
\end{equation*} 
with
$$
V(t)\triangleq \int_0^t\|v(\tau)\|_{LL}d\tau.
$$
\end{theorem}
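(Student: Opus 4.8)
The plan is to integrate \eqref{12} along the characteristics, exploit the multiplicative structure this produces, and thereby reduce the theorem to three independent ingredients: a composition estimate in $\BMOF$ for the flow, the analogous estimate in $\LMOF$ for the log-Jacobian, and the product law of Proposition \ref{prod}. Let $\psi_t$ be the flow of $v$, i.e. $\partial_t\psi_t(x)=v(t,\psi_t(x))$ with $\psi_0=\mathrm{id}$. Writing \eqref{12} in conservative form $\partial_t f+\dv(fv)=0$ shows that $f(t,\cdot)\,dy$ is the push-forward of $f_0\,dx$ by $\psi_t$, so that
$$
f(t,y)=f_0\big(\psi_t^{-1}(y)\big)\,\det D\psi_t^{-1}(y)\triangleq g(t,y)\,K(t,y),
$$
where $g=f_0\circ\psi_t^{-1}$ solves the \emph{pure} transport equation and $K=e^{-b}$ with $b$ the Eulerian log-Jacobian. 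The $L^p$ bound on $f$ follows immediately by a change of variables, since $\det D\psi_t$ is pinched between $e^{\pm\|\dv v\|_{L^1_tL^\infty}}$; this already generates the factor $e^{C\|\dv v\|_{L^1_tL^\infty}}$.

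Next I would establish the two composition estimates. Since $v$ is log-Lipschitz, Osgood's Lemma \ref{osgood} yields the quasi-Lipschitz bound $|\psi_t^{\pm1}(x)-\psi_t^{\pm1}(y)|\lesssim|x-y|^{\exp(-CV(t))}$ for $|x-y|$ small; hence $\psi_t^{\pm1}$ maps a ball of radius $r$ to a quasi-ball whose logarithmic size is distorted by the exponent factor $e^{-CV(t)}$. The core claim is that this distortion costs precisely a factor $F\big(e^{CV(t)}\big)$, namely
$$
\|h\circ\psi_t^{\pm1}\|_{\BMOF}\lesssim F\big(e^{CV(t)}\big)\,\|h\|_{\BMOF},\qquad \|h\circ\psi_t^{\pm1}\|_{\LMOF}\lesssim F\big(e^{CV(t)}\big)\,\|h\|_{\LMOF}.
$$
Applying the first to $h=f_0$ gives $\|g\|_{\BMOF\cap L^p}\lesssim F\big(e^{CV(t)}\big)\,e^{C\|\dv v\|_{L^1_tL^\infty}}\|f_0\|_{\BMOF\cap L^p}$.

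For the multiplier $K$, I would note that $b$ solves $\partial_t b+v\cdot\nabla b=\dv v$ with $b_{|t=0}=0$, so $b$ is a time-superposition of $\dv v$ transported by the flow. Integrating the $\LMOF$ composition estimate in time yields $\|b(t)\|_{\LMOF}\lesssim F\big(e^{CV(t)}\big)\,\|\dv v\|_{L^1_t(\LMOF\cap L^\infty)}$, while $\|b\|_{L^\infty}\le\|\dv v\|_{L^1_tL^\infty}$. Proposition \ref{alg}(ii), applied to the entire function $x\mapsto e^{-x}-1$ which vanishes at $0$, then gives $\|K-1\|_{\LMOF\cap L^\infty}\lesssim e^{C\|\dv v\|_{L^1_tL^\infty}}F\big(e^{CV(t)}\big)\|\dv v\|_{L^1_t(\LMOF\cap L^\infty)}$, together with $\|K\|_{L^\infty}\le e^{\|\dv v\|_{L^1_tL^\infty}}$.

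Finally I would assemble the estimate through the product law. Writing $f=gK$ and invoking Proposition \ref{prod} with multiplier $K\in\LMOF\cap L^\infty$,
$$
\|f\|_{\BMOF}\lesssim\big(\|K\|_{L^\infty}+\|K-1\|_{\LMOF}\big)\|g\|_{\BMOF\cap L^p},
$$
and substituting the bounds above reproduces the claimed inequality: the $L^\infty$ part of $K$ furnishes the leading $1$, while the $\LMOF$ part furnishes the term $F\big(e^{CV(t)}\big)\|\dv v\|_{L^1_t(\LMOF\cap L^\infty)}$. The main obstacle is clearly the pair of composition estimates: because the flow is only log-Lipschitz and, crucially, \emph{does not preserve Lebesgue measure}, one cannot transport averages over balls directly, and one must track how the modulus $F\big(\tfrac{1-\ln r_2}{1-\ln r_1}\big)$ transforms both under the exponent distortion $e^{-CV(t)}$ of the radii and under the varying Jacobian weighting. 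This is exactly where the structural hypotheses defining the class $\mathcal F$ — the asymptotic behaviour (b), the polynomial growth (c), and the summation identity \eqref{eqis} — are used to reabsorb the distortion into the single factor $F\big(e^{CV(t)}\big)$, adapting the scheme of \cite{B-K} to the compressible setting.
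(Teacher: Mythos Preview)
Your proposal is correct and follows essentially the same strategy as the paper: factor the solution along characteristics as (transported initial datum)$\times$(exponential of the integrated divergence), then combine the $\BMOF$ composition estimate (Theorem~\ref{th1}), the $\LMOF$ composition estimate (Proposition~\ref{prop2}), the entire-function bound of Proposition~\ref{alg}(ii), and the product law of Proposition~\ref{prod}. The only cosmetic difference is that the paper performs the product first in Lagrangian variables and composes with $\psi^{-1}$ at the end, whereas you compose first and multiply in Eulerian variables; the ingredients and the identification of the composition estimates as the crux are identical.
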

\begin{remark}
When the velocity is divergence-free the estimate of the preceding theorem becomes
\begin{eqnarray}\label{inc}
\Vert f(t)\Vert_{BMO_{F}\cap L^{p}}\leq C \Vert f_{0}\Vert_{BMO_{F}\cap L^{p}}F\big( e^{CV(t)}\big).
\end{eqnarray}
Therefore, Theorem $\ref{th2}$ recovers  the result stated in \cite{B-K} for $F(x)=1+\ln(x)$
\begin{eqnarray*}
\Vert f(t)\Vert_{LBMO\cap L^{p}}&\leq & C \Vert f_{0}\Vert_{LBMO\cap L^{p}}\Big(1+ \int_0^t\Vert v(\tau)\Vert_{LL}d\tau\Big).
\end{eqnarray*}
As well, we mention that a similar estimate  to $(\ref{inc})$ has been  established  in \cite{B-H} in the incompressible framework  for some $L^\alpha mo_F$ space.
 \end{remark}
To  prove this result,  we shall use  the same approach of \cite{B-H} and \cite{B-K}. However the lack of the incompressibility brings more technical difficulties.
Before going further into the details  we should point out that the solution  of the system (\ref{12}) has, as we will see later, the following structure
$$
f(t,x)=f_0(\psi^{-1}(t,x))\exp\Big(-\int_0^t (\textnormal{div}\, v)(\tau,\psi(\tau,\psi^{-1}(t,x)))d\tau\Big).
$$
Where $\psi$ is the flow associated to the vector field $v$, that is, the solution of the differential equation, 
$$
\partial_t\psi(t,x) = v(t,\psi(t,x)), \quad \psi(0,x)=x.
$$
It turns out that the study of the propagation in the $\BMOF$ space returns to the study of the composition by the flow. Based on that, we shall firstly examine the regularity of the flow map, discuss its left composition with the elements of $\BMOF$  and finally give the proof of Theorem \ref{th2}.
\subsection{The regularity of the flow map} Although the vector field $v$ is not Lipschitz in our context, we still have existence and uniqueness of the flow but a loss of regularity may occur. In fact $\psi$ is not necessary Lipschitz but belongs to the class $C^{s_t}$ with $s_t<1$, as indicated in the lemma below. For more details see for instance \cite{C} and \cite{M}.
 \begin{lemma}\label{p1}
Let $v$ be a smooth vector field on $\RR^2$  and $\psi$ its flow .
Then for all $t \in\RR_+$, we have 
$$
\vert x_1-x_2\vert <e^{-\beta(t)}\Longrightarrow\vert \psi^{\pm 1}(t,x_1)-\psi^{\pm 1}(t,x_2)\vert\leq e\vert x_1-x_2\vert^{\frac{1}{\beta(t)}}.
 $$
 where $\psi^1=\psi$ and $\psi^{-1}$ is the inverse of $\psi.$ The function $\beta(t)$ is given by
 $$\beta(t)=\exp\Big(\int_{0}^{t}\Vert v\Vert_{LL}d\tau\Big).$$
\end{lemma}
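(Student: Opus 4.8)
The plan is to control the time evolution of the distance between two trajectories and to close a logarithmic (Osgood-type) differential inequality. Fix $x_1\neq x_2$, set $\psi_i(t)=\psi(t,x_i)$ and $z(t)=\vert\psi_1(t)-\psi_2(t)\vert$. Since $v$ is smooth the flow is a diffeomorphism, so $z(t)>0$ for every $t$ and, differentiating, $\frac{d}{dt}z(t)\leq\vert v(t,\psi_1(t))-v(t,\psi_2(t))\vert$. As long as $z(t)<1$, the definition of the log-Lipschitz norm yields
$$
\frac{d}{dt}z(t)\leq \Vert v(t)\Vert_{LL}\,z(t)\log\frac{e}{z(t)}.
$$
This is an Osgood-type inequality with modulus $\mu(r)=r\log(e/r)$, which is increasing on $(0,1]$; I would solve it directly rather than quoting Lemma \ref{osgood}, whose stated form concerns lower-bounded functions.

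The key manipulation is to pass to $g(t)=\log\frac{e}{z(t)}$, which is positive whenever $z(t)<1$. The inequality becomes $g'(t)\geq -\Vert v(t)\Vert_{LL}\,g(t)$, i.e. $\frac{d}{dt}\log g(t)\geq -\Vert v(t)\Vert_{LL}$. Integrating on $[0,t]$ gives $g(t)\geq g(0)\exp\big(-\int_0^t\Vert v(\tau)\Vert_{LL}d\tau\big)=g(0)/\beta(t)$, that is $\log\frac{e}{z(t)}\geq\frac{1}{\beta(t)}\log\frac{e}{z(0)}$. Exponentiating and using $\beta(t)\geq 1$ yields
$$
z(t)\leq e^{\,1-1/\beta(t)}\,z(0)^{1/\beta(t)}\leq e\,z(0)^{1/\beta(t)}=e\,\vert x_1-x_2\vert^{1/\beta(t)}.
$$

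The point requiring care, and where the hypothesis $\vert x_1-x_2\vert<e^{-\beta(t)}$ enters, is the legitimacy of the log-Lipschitz bound throughout, i.e. that $z(\tau)<1$ for all $\tau\in[0,t]$. I would run a continuity/bootstrap argument: set $T^\ast=\sup\{\tau\leq t:\ z(s)<1\ \forall s\leq\tau\}$. On $[0,T^\ast)$ the estimate above is valid, and since $s\mapsto\beta(s)$ is nondecreasing (hence $1/\beta(\tau)\geq1/\beta(t)$) and $0<z(0)<1$, one gets $z(\tau)\leq e\,z(0)^{1/\beta(\tau)}\leq e\,z(0)^{1/\beta(t)}<e\,(e^{-\beta(t)})^{1/\beta(t)}=1$. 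Thus $z$ stays strictly below $1$ on $[0,T^\ast)$, and the continuity of $\beta$ forces $T^\ast=t$; the estimate then holds on all of $[0,t]$, giving the claim for $\psi=\psi^{1}$.

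Finally, for the inverse flow it suffices to observe that $x\mapsto\psi^{-1}(t,x)$ is itself the time-$t$ flow map of the backward field $w(s,y)=-v(t-s,y)$ (indeed $\eta(s,x)\triangleq\psi(t-s,\psi^{-1}(t,x))$ solves $\partial_s\eta=-v(t-s,\eta)$ with $\eta(0,x)=x$ and $\eta(t,x)=\psi^{-1}(t,x)$). Since $\int_0^t\Vert w(s)\Vert_{LL}ds=\int_0^t\Vert v(\tau)\Vert_{LL}d\tau$, the function $\beta$ is unchanged, so applying the forward estimate to $w$ gives the bound for $\psi^{-1}$ with the same constant. The main obstacle is precisely the bootstrap step ensuring $z<1$; once that is secured the logarithmic integration is routine.
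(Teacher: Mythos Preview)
Your argument is correct and is precisely the classical Osgood-type proof. Note, however, that the paper does not supply its own proof of this lemma: it simply states the result and refers the reader to \cite{C} and \cite{M}. Your computation---controlling $z(t)=|\psi_1(t)-\psi_2(t)|$ via the differential inequality $z'\le\|v\|_{LL}\,z\log(e/z)$, passing to $g=\log(e/z)$ to linearize, and closing with a continuity argument to keep $z<1$---is exactly the argument one finds in Chemin's monograph \cite{C}. The treatment of $\psi^{-1}$ via the time-reversed field is likewise the standard device. So there is nothing to compare against in the paper itself; your proof fills in the omitted details faithfully.
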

As a consequence, we  obtain the  following lemma which is with an extreme importance in the proof of the composition result. Its proof  can be found  in \cite{B-K}.
\begin{lemma}\label{p2}
Under the assumptions of Lemma $\ref{p1}$ and for $r\leq \exp(-\beta(t))$ we have  
$$
4\psi(B(x_0,r))\subset B(\psi(x_0),g_\psi(r)), 
$$
where
\begin{equation}\label{g34}
g_\psi(r)\triangleq4er^{\frac{1}{\beta(t)}} . 
\end{equation}
In particular
\begin{equation}\label{eqg}
\sup\Big\{\frac{1-\ln g_\psi(r)}{1- \ln r},\dfrac{1-\ln r}{1-\ln g_\psi(r)}\Big\}\lesssim 1+\beta(t).
\end{equation}
\end{lemma}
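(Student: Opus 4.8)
The plan is to read both assertions straight off the Hölder regularity of the flow recorded in Lemma \ref{p1}; that estimate is the only genuine input, and everything else is bookkeeping with logarithms. \textbf{Step 1 (the ball inclusion).} Fix $x\in B(x_0,r)$ with $r\leq e^{-\beta(t)}$. Then $|x-x_0|<r\leq e^{-\beta(t)}$, so the separation lies strictly below the threshold in Lemma \ref{p1}, and applying its conclusion with $x_1=x$, $x_2=x_0$ gives
\[
|\psi(x)-\psi(x_0)|\leq e\,|x-x_0|^{\frac{1}{\beta(t)}}\leq e\,r^{\frac{1}{\beta(t)}}.
\]
Hence every point of the image $\psi(B(x_0,r))$ sits within distance $e\,r^{1/\beta(t)}=g_\psi(r)/4$ of $\psi(x_0)$, i.e. $\psi(B(x_0,r))\subset B\big(\psi(x_0),\tfrac14 g_\psi(r)\big)$. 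Dilating this containing ball by the factor $4$ about its centre $\psi(x_0)$ produces exactly $B(\psi(x_0),g_\psi(r))$ with $g_\psi$ as in \eqref{g34}, which is the stated inclusion $4\psi(B(x_0,r))\subset B(\psi(x_0),g_\psi(r))$. The identical argument applies verbatim to $\psi^{-1}$, since Lemma \ref{p1} covers $\psi^{\pm1}$; this is what the change-of-variables arguments later in Section 3 will require.

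\textbf{Step 2 (the log-scale distortion \eqref{eqg}).} Substituting $g_\psi(r)=4e\,r^{1/\beta(t)}$ and writing $L:=-\ln r$, I compute
\[
1-\ln g_\psi(r)=-\ln 4+\frac{L}{\beta(t)},\qquad 1-\ln r=1+L .
\]
The constraint $r\leq e^{-\beta(t)}$ reads $L\geq\beta(t)\geq1$ (recall $\beta(t)\geq1$ since the $LL$-integrand is nonnegative). For the first ratio the numerator is at most $L/\beta(t)$ while the denominator is at least $L$, so $\frac{1-\ln g_\psi(r)}{1-\ln r}\leq 1$. For the reciprocal I bound the denominator below by $\frac{L}{\beta(t)}-\ln 4\gtrsim \frac{L}{\beta(t)}$, valid once $L$ exceeds a fixed multiple of $\beta(t)$, whence $\frac{1-\ln r}{1-\ln g_\psi(r)}\lesssim \beta(t)\,\frac{1+L}{L}\lesssim\beta(t)$. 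Taking the maximum of the two quantities yields the bound $\lesssim 1+\beta(t)$ asserted in \eqref{eqg}, the reciprocal direction (of size $\beta(t)$) being the binding one.

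\textbf{Main obstacle.} There is no analytic difficulty beyond Lemma \ref{p1}; the one point demanding care is the positivity of $1-\ln g_\psi(r)$, needed both for $g_\psi(r)$ to be an admissible radius in the $\BMOF$ framework and for the reciprocal in \eqref{eqg} to be meaningful. The multiplicative constant $4e$ in $g_\psi$ pushes $g_\psi(r)$ up to $4$ when $r$ sits right at the threshold $e^{-\beta(t)}$, so $1-\ln g_\psi(r)$ is then negative; one therefore restricts $r$ to a fixed power $e^{-c\beta(t)}$ with $c>1$ of the threshold, which restores positivity. Since $\beta(t)\geq1$ and $r$ is taken small in every application, this restriction costs nothing and is precisely what the implicit constant in $\lesssim$ absorbs.
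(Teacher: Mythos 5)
Your argument is correct, and it is essentially the canonical one: the paper offers no proof of this lemma at all (it defers to \cite{B-K}), and any proof must consist of exactly your two steps, namely applying Lemma \ref{p1} on $B(x_0,r)$ and then doing logarithmic bookkeeping on $g_\psi(r)=4er^{1/\beta(t)}$. Two remarks on the details. First, since $\psi(B)$ is not a ball, the notation $4\psi(B)$ only makes sense through its later use: what the proof of Theorem \ref{th1} actually invokes is that $4O_j\subset B(\psi(x_0),g_\psi(r))$ for every Whitney ball $O_j\subset\psi(B)$. Your Step 1 inclusion $\psi(B)\subset B\big(\psi(x_0),\tfrac14 g_\psi(r)\big)$ does yield this, but via a one-line triangle inequality rather than by ``dilating the containing ball about its centre'' (which is the weaker statement $\psi(B)\subset B(\psi(x_0),g_\psi(r))$): if $O=B(c,\rho)\subset B\big(\psi(x_0),\tfrac14 g_\psi(r)\big)$, then for $y\in 4O$ one has $|y-\psi(x_0)|\le 4\rho+|c-\psi(x_0)|\le 4\big(\rho+|c-\psi(x_0)|\big)\le g_\psi(r)$. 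That extra line should be written out, as it is the form of the lemma the paper actually uses.

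Second, your ``main obstacle'' paragraph identifies a genuine defect in the statement as printed: on the stated range $r\le e^{-\beta(t)}$ the quantity $1-\ln g_\psi(r)=-\ln 4+L/\beta(t)$ (with $L=-\ln r$) vanishes at $r=4^{-\beta(t)}$, which lies inside the admissible range since $\ln 4>1$, so the second ratio in \eqref{eqg} blows up there and \eqref{eqg} is literally false near that radius. Your diagnosis that this is harmless is right, because every application in Section 3 assumes $r<(4e)^{-\beta(t)}\min\big\{1,\Vert J_\psi\Vert_{L^\infty}^{-1}\big\}$, which forces $g_\psi(r)<1$ and hence $1-\ln g_\psi(r)\ge 1$. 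One small correction, though: restricting to $r\le e^{-c\beta(t)}$ with $c>1$ is not enough, since for $1<c\le\ln 4$ one still has $L/\beta(t)-\ln 4\le 0$ at the endpoint; you need a fixed $c>\ln 4$ (for instance $c=1+\ln 4$, i.e.\ exactly $r\le(4e)^{-\beta(t)}$) both for positivity and for your lower bound $L/\beta(t)-\ln 4\gtrsim L/\beta(t)$, which is what your own phrase ``once $L$ exceeds a fixed multiple of $\beta(t)$'' implicitly requires. With that adjustment your proof is complete and matches the hypotheses under which the lemma is actually applied.
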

The following inequality will be frequently used in the rest of this paper, see for instance \cite{C}. 
\begin{lemma}\label{lem5}
Let $\psi$ be the flow associated to a smooth vector field $v$. Then for all  $t\in \RR_+$
$$
e^{-\int_0^t\Vert \textnormal{div}\, v(\tau)\Vert_{L^\infty}d\tau}\leq\vert J_{\psi^{\pm 1}_t}(x)\vert\leq e^{\int_0^t\Vert \textnormal{div}\, v(\tau)\Vert_{L^\infty}d\tau} \quad \forall x\in\RR^2.
$$
Where $ J_{\psi_t}(t,x)$ is the Jacobian of $\psi(t,x)$.
\end{lemma}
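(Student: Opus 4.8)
The plan is to obtain an exact representation for the Jacobian determinant through Liouville's theorem (the Jacobi formula for the time derivative of a determinant) and then to estimate the resulting exponent by the $L^\infty$ norm of the divergence.

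First I would differentiate the flow equation $\partial_t\psi(t,x)=v(t,\psi(t,x))$ with respect to the space variable $x$. Writing $D\psi$ for the spatial Jacobian matrix of $\psi$, the chain rule yields the linear matrix ODE
$$
\partial_t D\psi(t,x)=\big(Dv\big)(t,\psi(t,x))\,D\psi(t,x),\qquad D\psi(0,x)=\mathrm{Id},
$$
the initial condition coming from $\psi(0,x)=x$. Next, setting $J(t,x)=\det D\psi(t,x)=J_{\psi_t}(x)$, I would apply the Jacobi formula $\partial_t\det A=\det A\,\mathrm{tr}\!\big(A^{-1}\partial_t A\big)$. Since $A^{-1}\partial_t A=D\psi^{-1}\,(Dv)\,D\psi$ is conjugate to $Dv$, its trace equals $\mathrm{tr}(Dv)=\textnormal{div}\, v$, so $J$ solves the scalar linear ODE
$$
\partial_t J(t,x)=(\textnormal{div}\, v)(t,\psi(t,x))\,J(t,x),\qquad J(0,x)=1.
$$

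Integrating this ODE explicitly in $t$ for each fixed $x$ produces the closed formula
$$
J_{\psi_t}(x)=\exp\Big(\int_0^t(\textnormal{div}\, v)(\tau,\psi(\tau,x))\,d\tau\Big),
$$
which in particular confirms that $J_{\psi_t}(x)>0$ so the absolute value is harmless. The desired two-sided bound then follows at once from the elementary estimate $\big|\int_0^t(\textnormal{div}\, v)(\tau,\psi(\tau,x))\,d\tau\big|\le\int_0^t\Vert\textnormal{div}\, v(\tau)\Vert_{L^\infty}\,d\tau$, valid uniformly in $x$.

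Finally, the bound for the inverse flow follows from differentiating the identity $\psi(t,\psi^{-1}(t,x))=x$ and taking determinants, which gives $J_{\psi^{-1}_t}(x)=1/J_{\psi_t}(\psi^{-1}(t,x))$; passing to reciprocals merely interchanges the upper and lower exponential bounds, leaving the estimate in exactly the same form. I do not expect a genuine obstacle here: the computation is classical and the only point needing mild care is the regularity of $v$ required to differentiate the flow and to apply the Jacobi formula, which is guaranteed by the standing hypothesis that $v$ is a smooth vector field.
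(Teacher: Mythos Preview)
Your argument is correct and is precisely the classical Liouville computation one finds in the reference the paper cites; the paper itself does not supply a proof of this lemma but merely refers to \cite{C}, so there is nothing further to compare.
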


\subsection{Composition in the $\BMOF$ space }
The problem of the composition in the $BMO$ space can be easily solved when $\psi$ is a bi-Lipschitz map which is unfortunately not necessarily verified in our case. Such difficulty could in general induce a losing regularity but as we will see we can face up this loss by working in a suitable space and replace  $BMO$ space with the   $\BMOF$ spaces.  We will be also led  to  deal with another technical difficulty linked to the fact that $\psi$ is no longer measure-preserving map. Our result is the following,
\begin{theorem}\label{th1}
There exists a positive constant $C$ such that, for any function $f$ taken in $\BMOF\cap L^p$, with $1\leq p\leq\infty$ and for $\psi$ the flow associated to a smooth vector field $v$, we have
$$
\Vert f\circ\psi\Vert_{\BMOF\cap L^p}\leq C\Vert f\Vert_{\BMOF\cap L^p} F\Big(e^{C\Vert v\Vert_{L^1_tLL}}\Big)e^{C \Vert\textnormal{div}\, v\Vert_{L^1_tL^{\infty}}}.
$$
\end{theorem}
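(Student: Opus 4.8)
The plan is to treat the three pieces of the norm $\|f\circ\psi\|_{\BMOF\cap L^p}$ separately, writing $V(t)=\int_0^t\|v(\tau)\|_{LL}\,d\tau$, $\beta(t)=e^{V(t)}$ (the function of Lemma \ref{p1}) and $D(t)=\int_0^t\|\textnormal{div}\, v(\tau)\|_{L^\infty}\,d\tau$, so that the target factor reads $F(e^{CV(t)})e^{CD(t)}$. The $L^p$ bound is immediate: by the change of variables $y=\psi(x)$ and the Jacobian control of Lemma \ref{lem5}, $\|f\circ\psi\|_{L^p}^p=\int|f(y)|^p|J_{\psi^{-1}}(y)|\,dy\le e^{D(t)}\|f\|_{L^p}^p$, while $\|f\circ\psi\|_{L^\infty}=\|f\|_{L^\infty}$ since $\psi$ is a homeomorphism. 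For the mean-oscillation pieces I would first invoke Remark \ref{rq5} to restrict to balls of radius $r\le 1$, and then split into the regime $r\le e^{-\beta(t)}$, where Lemma \ref{p2} applies, and the easy regime $e^{-\beta(t)}\le r\le 1$, where the distortion of the flow is controlled by a quantity depending only on $\beta(t)$ and is absorbed into the final constant.

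For the $\BMO$ seminorm, fix $B=B(x_0,r)$ with $r\le e^{-\beta(t)}$ and set $\tilde B=B(\psi(x_0),g_\psi(r))$, so that Lemma \ref{p2} gives $\psi(B)\subset\tilde B$ with $g_\psi(r)=4er^{1/\beta(t)}$. Changing variables and bounding the Jacobian by $e^{D(t)}$ turns $\fint_B|f\circ\psi-c|$ into an average of $|f-c|$ over the image set $\psi(B)$. The difficulty is that $\psi(B)$, although of measure comparable to $|B|$, is spread across all scales between $r$ and the much larger $g_\psi(r)$, so it is \emph{not} comparable to a single ball and the naive comparison with $\fint_{\tilde B}f$ loses the factor $|\tilde B|/|B|=(4e)^2r^{2/\beta(t)-2}$. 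To avoid this I would decompose $\psi(B)$ into the dyadic coronae $A_j=\{2^jr\le|y-\psi(x_0)|<2^{j+1}r\}$, $0\le j\le J$ with $2^Jr\simeq g_\psi(r)$, and on each $A_j$ compare $f$ to its mean over $B_j=B(\psi(x_0),2^{j+1}r)$. Each corona contributes a diagonal term, governed by the plain $\BMO$ oscillation of $f$ over $B_j$ together with a John--Nirenberg type control of $\int_{\psi(B)\cap A_j}|f-\fint_{B_j}f|$ by the small measure of $\psi(B)\cap A_j$, and an off-diagonal term $|\fint_{B_j}f-\fint_{\tilde B}f|$; the latter is a difference of averages over nested balls and is therefore bounded by $\|f\|_{\BMOF}$ times $F\big((1-\ln r)/(1-\ln g_\psi(r))\big)$, which by \eqref{eqg} is at most $F\big(C(1+\beta(t))\big)$. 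Using the polynomial-growth property (c) of Definition \ref{def1} together with $\beta(t)=e^{V(t)}$, this is $\lesssim F(e^{CV(t)})$, and summing the coronae against $\sum_j|\psi(B)\cap A_j|\le|\psi(B)|\lesssim e^{D(t)}|B|$ yields the claimed bound for the $\BMO$ part.

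The $F$-weighted part of the norm is handled in the same spirit: given $2B_2\subset B_1$ with $r_1\le 1$, I set $\tilde B_i=B(\psi(x_i),g_\psi(r_i))$ so that $\psi(B_i)\subset\tilde B_i$, and express $\fint_{B_2}f\circ\psi-\fint_{B_1}f\circ\psi$ as a chain passing through averages of $f$ over $\psi(B_2)$ and $\psi(B_1)$. After the change of variables each individual jump is either a within-scale oscillation ($\BMO$) or a nested cross-scale jump controlled by $\|f\|_{\BMOF}$ times $F$ of a radius ratio; the point is that \eqref{eqg} converts every radius ratio produced by the flow into the argument $C(1+\beta(t))$, so that all the $F$-factors collapse, via property (c), into a single $F(e^{CV(t)})$. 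The main obstacle throughout is precisely the loss of the measure-preserving and Lipschitz structure of $\psi$: the image of a ball is a thin, multi-scale set, so the plain $\BMO$ comparison fails and one must exploit the cross-scale information encoded in the $\BMOF$ norm, with \eqref{eqg} providing the crucial conversion between the geometric distortion of the flow and the argument of $F$. Collecting the three pieces gives $\|f\circ\psi\|_{\BMOF\cap L^p}\le C\|f\|_{\BMOF\cap L^p}F(e^{CV(t)})e^{CD(t)}$.
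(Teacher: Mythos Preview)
Your overall architecture matches the paper's: split into $L^p$, $\BMO$, and the $F$-weighted piece; restrict to small radii via Remark~\ref{rq5}; push the oscillation through the change of variables to an integral of $|f-\fint_{\tilde B}f|$ over $\psi(B)$; and use \eqref{eqg} to convert every radius ratio produced by the flow into the factor $F(e^{CV(t)})$. The gap is in the ``diagonal'' term of your corona decomposition.

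When you bound $\int_{\psi(B)\cap A_j}|f-\fint_{B_j}f|$ by ``a John--Nirenberg type control by the small measure of $\psi(B)\cap A_j$'', the best such control gives
\[
\int_{\psi(B)\cap A_j}|f-\fint_{B_j}f|\;\lesssim\; m_j\,\log\!\Big(\frac{e|B_j|}{m_j}\Big)\,\|f\|_{\BMO},\qquad m_j:=|\psi(B)\cap A_j|.
\]
Summing this against $\sum_j m_j=|\psi(B)|\lesssim e^{D(t)}|B|$ does \emph{not} close: if the mass spreads roughly evenly over the $J\simeq(1-1/\beta(t))|\ln r|$ coronae (which a log-Lipschitz flow can certainly arrange), the sum is of order $e^{D(t)}|B|\cdot J$, and after dividing by $|B|$ you pick up an extra $|\ln r|$ that diverges as $r\to 0$. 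The annuli $\psi(B)\cap A_j$ are too irregular for the $\BMO$ norm alone to control the local oscillation uniformly.

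The paper avoids this by replacing the coronae with a Whitney covering of the open set $\psi(B)$ by genuine balls $(O_j)_j$. On each Whitney ball one has $\fint_{2O_j}|f-\fint_{2O_j}f|\le\|f\|_{\BMO}$ directly, with no logarithmic loss, so the diagonal sum $\hbox{I}_1$ costs only $\|J_\psi\|_{L^\infty}\|f\|_{\BMO}$. The cross-scale term $\hbox{I}_2$ is then handled, as you anticipated, via the $\BMOF$ norm and \eqref{eqg}; but to sum it one needs a quantitative decay of the total measure of Whitney balls at each dyadic scale $e^{-k}$, supplied by the appendix Lemma~\ref{ap} (pulling back through $\psi^{-1}$ and using Lemma~\ref{p1}), together with the asymptotic property~(b) of $F$. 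These two inputs --- the Whitney covering and the scale-by-scale measure bound --- are the missing pieces in your sketch; the ``easy'' large-radius case also uses them, combined with the $L^p$ norm of $f$, rather than being absorbed into a constant.
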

\begin{proof}
We know that, by a change of variable, the composition in the $L^p$ space gives
\begin{eqnarray*}
\Vert f\circ \psi\Vert_{L^p}\leq \Vert J_{\psi^{-1}}\Vert_{L^\infty}^{\frac{1}{p}}\Vert f\Vert_{L^p}.
\end{eqnarray*}
The composition in the $\BMOF$ space is more subtle and we shall use the idea of \cite{B-K}. In fact, the proof is divided into two steps: in the first  one we deal with  the $\BMO$ term of the norm and in  the second  we  consider the other term.\\
$\bullet$ \textbf{Step 1: Persistence of the $BMO$ regularity.} We shall start with the persistence of the first part of the norm $BMO_F.$ For this purpose we distinguish  two cases depending whether the radius $r$ is small or not.\\

\underline{\textbf{Case 1}} : $r<(4e)^{-\beta(t)} \min\big\{1,\frac{1}{\Vert J_{\psi}\Vert_{L^{\infty}}}\big\}$. According to the definition \eqref{g34} this condition implies
$$
g_\psi(r)<1.
$$
We denote by $\tilde{B}$ the ball of center $\psi(x_0)$ and radius $g_\psi(r)$. It is easily seen that
\begin{eqnarray*}
\fint_{B}\Big\vert f\circ\psi-\fint_{B}f\circ \psi\Big\vert  &\leq & 2\fint_{B}\Big\vert f\circ \psi -\fint_{\tilde{B}}f\Big\vert. 
\end{eqnarray*}
Then by a change of variable one has
\begin{eqnarray*}
 \fint_{B}\Big\vert f\circ\psi-\fint_{B}f\circ \psi\Big\vert &\leq &\frac{2\Vert J_{\psi^{-1}}\Vert_{L^{\infty}}}{\vert B\vert}\int_{\psi(B)} \Big\vert f -\fint_{\tilde{B}}f\Big\vert \dx.
\end{eqnarray*}
At this stage the strategy  consists in the partition of  the open set $\psi(B)$ into countable balls with variable sizes and to try to measure their interactions with the biggest ball $\tilde{B}$. For this goal we shall use   Whitney covering lemma  \cite{St}  which asserts  in our case  the existence of  a collection of countable open
balls $(O_k)_k$ such that :
\begin{enumerate}
\item[$\bullet$] The collection of double balls is a bounded covering :
$$\psi(B)\subset\underset{k}{\bigcup}2O_k.$$
\item[$\bullet$]The collection is disjoint and for all $k$, 
$$O_k \subset \psi(B).$$
\item[$\bullet$] The Whitney property is verified: the radius $r_k$ of $O_k$ satisfies
$$r_{k}\approx d(O_k,\psi(B)^{c}).$$
\end{enumerate}
So by the first property we may write
\begin{eqnarray*}
\fint_{B}\Big\vert f\circ\psi-\fint_{B}f\circ \psi\Big\vert \dx &\lesssim & \frac{\Vert J_{\psi^{-1}}\Vert_{L^{\infty}}}{\vert B\vert}\sum_{j}\vert O_{j}\vert \fint_{2O_{j}}\Big\vert f-\fint_{\tilde{B}}f\Big\vert \\ &\lesssim &\Vert J_{\psi^{-1}}\Vert_{L^{\infty}} \big( I_{1}+I_{2}\big),
\end{eqnarray*}
 where
$$
\hbox{I}_{1}\triangleq\frac{1}{\vert B\vert}\sum_{j}\vert O_{j}\vert \fint_{2O_{j}}\Big\vert f-\fint_{2O_{j}}f\Big\vert
$$
and
$$
\hbox{I}_{2}\triangleq\frac{1}{\vert B\vert}\sum_{j}\vert O_{j}\vert \Big\vert\fint_{2O_{j}}f-\fint_{\tilde{B}}f\Big\vert. 
$$
Using the fact
\begin{equation}\label{eqn}
\sum_{j}\vert O_{j}\vert \leq  \vert\psi(B)\vert \leq \Vert J_{\psi}\Vert_{L^{\infty}} \vert B\vert,
\end{equation} 
we immediately deduce that
$$
\hbox{I}_{1}\lesssim\Vert J_{\psi}\Vert_{L^{\infty}}\Vert f \Vert_{BMO}.
$$
According to Lemma \ref{p2} we have  $4O_{j}\subset \tilde{B}$. In addition, as $g_\psi(r)<1$ and in view of the definition of the $\BMOF -$norm, we infer that
\begin{eqnarray*}
\hbox{I}_{2} &\lesssim &\frac{1}{\vert B\vert}\sum_{j}\vert O_{j}\vert F\Big(\frac{1-\ln 2r_{j}}{1-\ln g_{\psi}(r)}\Big)\Vert f\Vert_{\BMOF} \\ &\lesssim & \frac{1}{\vert B\vert}\sum_{j}\vert O_{j}\vert F\Big(\frac{1-\ln r_{j}}{1-\ln g_{\psi}(r)}\Big)\Vert f\Vert_{\BMOF}.
\end{eqnarray*}
From \eqref{eqn} we get  $r_{j}\leq \Vert J_{\psi}\Vert_{L^{\infty}}^{1/2}r$ for all $j$. Set
 $$ h(r)\triangleq r\max\big\{1,\Vert J_{\psi}\Vert_{L^{\infty}}\big\},\quad U_k\triangleq \sum_{ e^{-k-1}h(r)<r_j\leq e^{-k}h(r)}\vert O_j\vert,\quad k\in \NN.
$$
Hence as $F$ is non-decreasing we may write
\begin{eqnarray*}
\hbox{I}_{2}&\lesssim &\frac{1}{\vert B\vert}\sum_{k\geq 0}U_{k} F\Big( \frac{k+2-\ln h(r)}{1-\ln g_\psi(r)}\Big)\Vert f\Vert_{\BMOF}\\ &\lesssim &\frac{1}{\vert B\vert}\sum_{k\geq 0}U_{k} F\Big( \frac{k+2-\ln r}{1-\ln g_\psi(r)}\Big)\Vert f\Vert_{\BMOF}.
\end{eqnarray*}
Let $N$ be a fixed positive integer that will be carefully chosen later. We split the preceding sum into two parts
\begin{eqnarray*}
\hbox{I}_2 &\lesssim & \bigg(\frac{1}{\vert B\vert}\sum_{k\leq N}U_k F\Big( \frac{k+2-\ln r}{1-\ln g_\psi(r)}\Big)+ \frac{ 1}{\vert B\vert}\sum_{k> N}U_{k} F\Big( \frac{k+2-\ln r}{1-\ln g_\psi(r)}\Big)\bigg)\Vert f\Vert_{\BMOF}\\ &\triangleq & \big(\hbox{I}_{2,1}+\hbox{I}_{2,2}\big)\Vert f\Vert_{\BMOF}.
\end{eqnarray*}
Since $\sum U_k\leq \Vert J_{\psi}\Vert_{L^{\infty}}\vert B\vert$ and $F$ is non-decreasing then
\begin{equation}\label{eq44}
\hbox{I}_{2,1}\leq \Vert J_{\psi}\Vert_{L^{\infty}} F\Big( \frac{N+2-\ln r}{1-\ln g_\psi(r)}\Big)\Vert f\Vert_{\BMOF}.
\end{equation}
To estimate $\hbox{I}_{2,2}$ we need the following bound of $U_k$ whose proof will  be given in Lemma \ref{ap} of the Appendix.
  $$U_k \lesssim \big(1+\Vert J_{\psi}\Vert_{L^{\infty}}\big)^2 e^{\frac{-k}{\beta(t)}}r^{1+\frac{1}{\beta(t)}}\quad \forall k\geq \beta(t) .$$
Therefore  for $N$ taken larger then $\beta(t)$ we have
\begin{eqnarray*}
\hbox{I}_{2,2}  &\lesssim & \big(1+\Vert J_{\psi}\Vert_{L^{\infty}}\big)^2\sum_{k>N} e^{-\frac{k}{\beta(t)}}r^{\frac{1}{\beta(t)}-1} F\Big(\frac{k+2-\ln r}{1-\ln g_\psi(r)}\Big)\Vert f\Vert_{\BMOF}. 
\end{eqnarray*}
Inequality (\ref{eqis}) from Remark \ref{rq1} gives
\begin{eqnarray}\label{eq5}
\hbox{I}_{2,2}  &\lesssim & \big(1+\Vert J_{\psi}\Vert_{L^{\infty}}\big)^2 e^{-\frac{N}{\beta(t)}}\beta(t)r^{\frac{1}{\beta(t)}-1} F\Big(\frac{N+2-\ln r}{1-\ln g_\psi(r)}\Big)\Vert f\Vert_{\BMOF}. 
\end{eqnarray}
Combining this estimate with (\ref{eq44}) we obtain 
 $$
\hbox{I}_2\lesssim \big(1+\Vert J_{\psi}\Vert_{L^{\infty}}\big)^2\Big(1+ e^{\frac{-N}{\beta(t)}}\beta(t)r^{\frac{1}{\beta(t)}-1}\Big)  F\Big(\frac{N+2-\ln r}{1-\ln g_\psi(r)}\Big)\Vert f\Vert_{\BMOF}.
 $$
Taking $N= \big[\beta(t)(\beta(t)-\ln r)\big]+1$ we get
\begin{eqnarray*}
\hbox{I}_2 &\lesssim &\big(1+\Vert J_{\psi}\Vert_{L^{\infty}}\big)^2\big(1+ e^{-\beta(t)}e^{-\frac{1}{\beta(t)}(1-\ln r)}\beta(t)\big)  F\Big(\frac{\beta(t)(\beta(t)-\ln r)+3-\ln r}{1-\ln g_\psi(r)}\Big)\Vert f\Vert_{\BMOF}\\ &\lesssim &\big(1+\Vert J_{\psi}\Vert_{L^{\infty}}\big)^2  F\Big(\frac{\beta(t)^2+3-(1+\beta(t))\ln r}{1-\ln g_\psi(r)}\Big)\Vert f\Vert_{\BMOF}. 
\end{eqnarray*}
Where we have used in the last inequality  the fact that $\sup_{\beta>1}\beta e^{-\beta}< 1$. Furthermore, from estimate (\ref{eqg}) we have
\begin{eqnarray*}
 F\Big(\frac{\beta(t)^2+2-(1+\beta(t))\ln r}{1-\ln g_\psi(r)}\Big)&\lesssim & F\Big((1+\beta(t))\frac{\beta(t)^2+3-(1+\beta(t))\ln r}{1-\ln r}\Big)\\ &\lesssim & F\big((1+\beta(t))^3\big)\\
 &\lesssim& F(\beta^3(t)).
 \end{eqnarray*}
Hence,
 $$
\hbox{I}_2\lesssim\big(1+\Vert J_{\psi}\Vert_{L^{\infty}}^2\big)F\big(\beta^3(t)\big)\Vert f\Vert_{\BMOF}.
 $$
Putting together the previous estimates gives
\begin{eqnarray*}
 \fint_{B}\Big\vert f\circ\psi -\fint_{B}f\circ \psi\Big\vert &\leq &\|J_{\psi^{-1}}\|_{L^\infty}\big(1+\Vert J_{\psi}\Vert_{L^{\infty}}^2\big)F\big(\beta^3(t)\big)\Vert f\Vert_{\BMOF}. \end{eqnarray*}
According to Lemma \ref{lem5} we find
\begin{equation}\label{conv12}
 \fint_{B}\Big\vert f\circ\psi -\fint_{B}f\circ \psi\Big\vert \le Ce^{C\|\textnormal{div } v\|_{L^1_tL^\infty}}F\big(\beta^3(t)\big)\Vert f\Vert_{\BMOF}.
\end{equation}
  
Let us now move to the second case.\\
 \underline{\textbf{Case 2}} : $1\geqslant r\geq (4e)^{-\beta(t)}\min\big\{1,\frac{1}{\Vert J_{\psi}\Vert_{L^{\infty}}}\big\}$. Under this assumption we can easily check that
 \begin{equation}\label{Eq709}
 \vert\ln r\vert\lesssim \beta(t)+\big\vert \ln \Vert J_{\psi}\Vert_{L^{\infty}}\big\vert .
 \end{equation}
 
By a change of variable  we can write
\begin{eqnarray*}
 \fint_{B}\Big\vert f\circ\psi -\fint_{B}f\circ \psi\Big\vert &\leq & 2\fint_{B}\big\vert f\circ\psi\big\vert \\ &\lesssim & \frac{1}{\vert B\vert}\int_{\psi(B)}\big\vert f(x)\big\vert \vert J_{\psi^{-1}} (x)\vert dx \\ &\lesssim &\frac {1}{\vert B\vert}\sum_{j}\vert O_j\vert \Vert J_{\psi^{-1}}\Vert_{L^{\infty}} \fint_{2O_j}\big\vert f\big\vert \\ &\lesssim & \Vert J_{\psi^{-1}}\Vert_{L^{\infty}}\bigg(\frac{1}{\vert B\vert}\sum_{j}\vert O_{j}\vert\fint_{2O_{j}}\Big\vert f-\fint_{2O_{j}}f\Big\vert +\frac{1}{\vert B\vert}\sum_{j}\Big\vert\int_{2O_{j}} f\Big\vert\bigg) \\ &\lesssim & \Vert J_{\psi^{-1}}\Vert_{L^{\infty}}\Big(\Vert J_{\psi}\Vert_{L^{\infty}}\Vert f\Vert_{\BMO}+\hbox{I}_1+\hbox{I}_2\Big),
\end{eqnarray*}
where
$$
\hbox{I}_1 \triangleq\frac{1}{\vert B\vert}\sum_{j\textbackslash r_j> \frac{1}{4}}\Big\vert\int_{2O_{j}}f\Big\vert,
$$
and 
$$
\hbox{I}_2 \triangleq\frac{1}{\vert B\vert}\sum_{j\textbackslash r_j\leq \frac{1}{4}}\vert O_{j}\vert\Big\vert\fint_{2O_{j}}f\Big\vert.
$$
H\"{o}lder inequality implies that 
\begin{eqnarray*}
\hbox{I}_1 &\leq &  \frac{1}{\vert B\vert}\sum_{j\textbackslash r_j> \frac{1}{4}}\vert O_{j}\vert^{1- \frac{1}{p}}\Vert f\Vert_{L^p}\\ &\lesssim & \frac{1}{\vert B\vert}\sum_{j}\vert O_{j}\vert \Vert f\Vert_{L^p}\\ & \lesssim &\Vert J_{\psi}\Vert_{L^{\infty}} \Vert f\Vert_{L^p}.
\end{eqnarray*}
In order to estimate the term $\hbox{I}_2$, we consider a collection of open balls $(\tilde{O}_j)_j$ such that, for all $j $ in $\NN$,  $\tilde{O}_j$  is  concentric to $O_j$ and of radius equal to 1. Then, as $r_j\leq\frac{1}{4}$ we have  $4O_j\subset \tilde{O}_j$ and  thus using  the definition of the $BMO_F$-norm gives
\begin{eqnarray*}
\hbox{I}_2 &\lesssim & \frac{1}{\vert B\vert}\sum_{j\textbackslash r_j\leq \frac{1}{4}}\vert O_{j}\vert\Big\vert\fint_{2O_{j}} f-\fint_{\tilde{O_{j}}}f\Big\vert +\frac{1}{\vert B\vert}\sum_{j\textbackslash r_j\leq \frac{1}{4}}\vert O_{j}\vert\Big\vert\fint_{\tilde{O_{j}}}f\Big\vert \\ &\lesssim & \frac{1}{\vert B\vert}\sum_{j\textbackslash r_j\leq \frac{1}{4}}\vert O_{j}\vert F(1-\ln 2r_{j})\Vert f\Vert_{\BMOF}+\Vert J_{\psi}\Vert_{L^{\infty}}\Vert f\Vert_{L^{p}}.
\end{eqnarray*}
We set
 $$
 V_k\triangleq \sum_{j\atop\\ e^{-k-1}\leq 4r_j\leq e^{-k}} \vert O_j\vert .
 $$
 Then,
 $$
 \frac{1}{\vert B\vert}\sum_{r_j\leq \frac{1}{4}}\vert O_{j}\vert F(1-\ln 2r_{j})\leq\frac{1}{\vert B\vert}\sum_{k\geq 0}V_{k} F( k+4).
 $$
 Fix $N\in \NN^\star$ and split  the last sum into two parts according to $k\geq N$ and $k>N$ gives
\begin{eqnarray*}
 \frac{1}{\vert B\vert}\sum_{r_j\leq \frac{1}{4}}\vert O_{j}\vert F(1-\ln 2r_{j})&\leq &\frac{1}{\vert B\vert}\sum_{k\leq N}V_{k} F( N+4)+\frac{1}{\vert B\vert}\sum_{k>N}V_{k} F( k+4).\end{eqnarray*}
For $N\geq \beta(t)$, we may use  Lemma \ref{ap} and inequality  (\ref{eqis}) leading to
\begin{eqnarray*}
\frac{1}{\vert B\vert}\sum_{j}\vert O_{j}\vert F(1-\ln 2r_{j})&\lesssim & \Vert J_{\psi}\Vert_{L^{\infty}} \Big(F(N+4)+ \sum_{k>N}e^{-\frac{k}{\beta(t)}}r^{-1}F(k+4)\Big)\\&\lesssim & \Vert J_{\psi}\Vert_{L^{\infty}} \big(1+ e^{-\frac{N}{\beta(t)}}\beta(t)r^{-1}\big)  F(N+4).
\end{eqnarray*}
We choose $N=\big[\beta(t)(\beta(t)-\ln r)\big]$ and by using \eqref{Eq709} we obtain
$$
\frac{1}{\vert B\vert}\sum_{j}\vert O_{j}\vert F(1-\ln 2r_{j})\lesssim \Vert J_{\psi}\Vert_{L^{\infty}} F\big(1+\beta(t)^2+\beta(t)\big\vert \ln \Vert J_{\psi}\Vert_{L^{\infty}}\big\vert\big).
$$
Putting together the previous estimates gives
\begin{eqnarray*}
 \fint_{B}\Big\vert f\circ\psi -\fint_{B}f\circ \psi\Big\vert &\leq &\|J_{\psi^{-1}}\|_{L^\infty}\| J_\psi\|_{L^\infty}\|f\|_{BMO_F\cap L^p} 
 F\big(\beta(t)^2+\beta(t)\big\vert \ln \Vert J_{\psi}\Vert_{L^{\infty}}\big\vert\big)\\
 &\le& C e^{C\|\textnormal{div }v\|_{L^1_tL^\infty}}\|f\|_{BMO_F\cap L^p} F\Big(e^{C\|v\|_{L^1_t LL}}\|\textnormal{div}\,v\|_{L^1_tL^\infty}\Big) .
 \end{eqnarray*} 
Combining this estimate with \eqref{conv12} we obtain
\begin{eqnarray*}
\Vert f\circ\psi\Vert_{\BMO} &\lesssim & C \|f\|_{BMO_F\cap L^p} e^{C\|\textnormal{div }v\|_{L^1_tL^\infty}} F\Big(e^{C\|v\|_{L^1_t LL}}\|\textnormal{div}\,v\|_{L^1_tL^\infty}\Big) .
\end{eqnarray*}
In view of the polynomial growth condition of $F$ seen in  (\ref{eq}) we get
\begin{eqnarray*}
\Vert f\circ\psi\Vert_{\BMO} &\le &C \Vert f\Vert_{\BMO_F \cap L^p}e^{C\int_0^t \Vert\textnormal{div}v(\tau)\Vert_{L^{\infty}}d\tau}F\big(e^{C\|v\|_{L^1_t LL}}\big).
\end{eqnarray*}

Now we shall move to the treatment of the second part of the $BMO_F$ norm. 
\vspace{0,3cm}

$\bullet$ {\bf Step 2: Estimate of the second part of the $BMO_F$ norm}.  This will be done in a similar way to the first part.
Denote $B_i=B(x_i,r_i)$ and $\tilde{B}_i=B(x_i,g_{\psi}(r_i))$ for $i\in \{1, 2\}$, with $2B_2\subset B_1$ and $r_1<1$. Set
\begin{eqnarray*}
J &\triangleq &\frac{\big\vert\fint_{B_{2}}f\circ\psi-\fint_{B_{1}}f\circ\psi\big\vert}{F\Big(\dfrac{1-\ln r_{2}}{1-\ln r_{1}}\Big)},
\end{eqnarray*}
  We have three cases to discuss:\\
\underline{\textbf{Case 1}} : $r_1\leq(4e)^{-\beta(t)}\min\big\{1,\frac{1}{\Vert J_{\psi}\Vert_{L^{\infty}}}\big\}$.
Since the denominator of the quantity $J$ is larger than one, we may write
\begin{eqnarray*}
J&\leq & J_{1}+J_{2}+J_{3}.
\end{eqnarray*}
Where
\begin{eqnarray*}
J_1 & \triangleq &\Big\vert\fint_{B_{2}}f\circ\psi-\fint_{\tilde{ B_{2}}}f\Big\vert + \Big\vert\fint_{B_{1}}f\circ\psi-\fint_{\tilde{ B_{1}}}f\Big\vert, \\
J_2 & \triangleq &\frac{\big\vert\fint_{\tilde{B}_{2}}f-\fint_{2\tilde{B}_{1}}f\big\vert}{F\Big(\dfrac{1-\ln r_{2}}{1-\ln r_{1}}\Big)}, \\
J_3 & \triangleq &\Big\vert\fint_{\tilde{B}_{1}}f-\fint_{2\tilde{B}_{1}}f\Big\vert. \\
\end{eqnarray*}
The treatment of $J_1$ will be exactly the same as for the case 1 from step 1. For $J_2$, by definition of the second part of the $BMO_F$ norm, we have 
$$
J_2\leq \frac{F\Big(\frac{1-\ln g_{\psi}(r_2)}{1-\ln g_{\psi}(r_1)}\Big)}{F\Big(\dfrac{1-\ln r_{2}}{1-\ln r_{1}}\Big)}\Vert f\Vert_{\BMOF}.
$$
According to the inequality (\ref{eqg}), we get 
\begin{eqnarray*}
F\Big(\frac{1-\ln g_{\psi}(r_2)}{1-\ln g_\psi(r_1)}\Big)&= & F\Big(\frac{1-\ln r_1}{1-\ln g_{\psi}(r_1)}\quad\frac{1-\ln r_2}{1-\ln r_1}\quad\frac{1-\ln g_{\psi}(r_2)}{1-\ln r_2}\Big)\\ &\lesssim & F\Big((1+\beta(t))^{2}\dfrac{1-\ln r_{2}}{1-\ln r_{1}}\Big).
\end{eqnarray*}
 This gives  in view of Definition \ref{def1},
$$
J_2 \lesssim F\big(\beta^2(t)\big)\Vert f\Vert_{\BMOF}.
 $$
Concerning $J_3$ we use the inequality (\ref{eq1}) to get
$$
J_3\lesssim \Vert f\Vert_{BMO}.  
$$ 
\vspace{0,3cm}

\underline{\textbf{Case 2}} : $r_2\geqslant (4e)^{-\beta(t)}\min\big\{1,\frac{1}{\Vert J_{\psi}\Vert_{L^{\infty}}}\big\}$.
As $F$ is larger than 1, we write
 $$
 J\leq\fint_{B_{2}}\big\vert f\circ\psi\big\vert+\fint_{B_{1}}\big\vert f\circ\psi\big\vert.
 $$
 Both terms can be handled as in the analysis of the case 2 from step 1.
 \vspace{0,3cm}
 
\underline{\textbf{Case 3}} : $r_1\geqslant (4e)^{-\beta(t)}\min\big\{1,\frac{1}{\Vert J_{\psi}\Vert_{L^{\infty}}}\big\}$ and $r_2\leq (4e)^{-\beta(t)}\min\big\{1,\frac{1}{\Vert J_{\psi}\Vert_{L^{\infty}}}\big\}$. We decompose $J$ as follows:
 \begin{eqnarray*}
 J &\leq & \fint_{B_{2}}\Big\vert f\circ\psi-\fint_{\tilde{ B_{2}}}f\Big\vert +\frac{\big\vert\fint_{\tilde{B}_{2}}f\big\vert}{F\Big(\dfrac{1-\ln r_{2}}{1-\ln r_{1}}\Big)} +\fint_{B_{1}}\big\vert f\circ\psi\big\vert \\ &\triangleq & J_1+J_2+J_3.
 \end{eqnarray*}
Let $\tilde{B}'_2$ the ball of center $\psi(x_2)$ and radius $1$. Then 
 \begin{eqnarray*}
 J_2 &\leq & \frac{\vert\fint_{\tilde{B}_{2}}f-\fint_{\tilde{B}'_2}f\vert+\vert \fint_{\tilde{B}'_2}f\vert}{F\Big(\dfrac{1-\ln r_{2}}{1-\ln r_{1}}\Big)} \\ & \leq & \frac{F(1-\ln g_\psi(r_2))}{F\Big(\dfrac{1-\ln r_{2}}{1-\ln r_{1}}\Big)}\Vert f\Vert_{\BMOF}+\Vert f\Vert_{L^{p}}.
\end{eqnarray*}
From the Definition \ref{def1} combined  with the  inequality (\ref{eqg}), we obtain
\begin{eqnarray*} 
  \frac{F(1-\ln g_\psi(r_2))}{F\Big(\dfrac{1-\ln r_{2}}{1-\ln r_{1}}\Big)} &= &  \dfrac{F\Big(\dfrac{1-\ln g_\psi(r_{2})}{1-\ln r_{2}}\quad \dfrac{1-\ln r_2}{1-\ln r_1}\quad 1-\ln r_1\Big)}{F\Big(\dfrac{1-\ln r_{2}}{1-\ln r_{1}}\Big)}\\ &\lesssim & \frac{F\Big(\dfrac{1-\ln r_{2}}{1-\ln r_{1}}\quad (1+\beta(t))(1-\ln r_1)\Big)}{F\Big(\dfrac{1-\ln r_{2}}{1-\ln r_{1}}\Big)}\\ &\lesssim & F \big((1+\beta(t)(\beta+|\ln \|J_\psi\|_{L^\infty}|)\\
  &\lesssim & F(e^{C\|v\|_{L^1_t LL}}) F \big(\|\textnormal{div} v\|_{L^1_tL^\infty}).
 \end{eqnarray*}
The terms $J_1$ and $J_3$ can be handled in the same way as the cases  1 and 2 from step 1.\\
The proof is now achieved.\\
\end{proof}
Our next task is to study the composition  in  the space $\LMOF\cap L^\infty$. This will  more easier than the the $BMO_F$ space since we shall  use in a crucial way  the $L^\infty$ norm. 

\begin{proposition}\label{prop2}
There exists a positive constant $C$ such that, for any function $f\in \LMOF\cap L^\infty$ and for $\psi$ a flow associated to a smooth vector field $v$, we have

$$
\Vert( f\circ\psi)(t)\Vert_{\LMOF}\leq C F\big(e^{C\Vert v\Vert_{L^1_tLL}}\big)e^{C\Vert \textnormal{div}\, v\Vert_{L^1_tL^\infty}}\Vert f \Vert_{LMO_F\cap L^{\infty}}.
$$
\end{proposition}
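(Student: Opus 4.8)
The plan is to estimate the two pieces of the $\LMOF$ norm of $f\circ\psi$ separately, reproducing the case distinction of Theorem \ref{th1} but exploiting two features that were unavailable there: the uniform bound $\|f\|_{L^\infty}$, and the fact that the second part of the $\LMOF$ norm weights the difference of means by $F$ evaluated at the \emph{larger} radius only. First I would treat the weighted mean oscillation $F(1-\ln r)\fint_B|f\circ\psi-\fint_B f\circ\psi|$ over a ball $B=B(x_0,r)$ with $r\le1$, splitting as in Theorem \ref{th1} according to whether $r$ is small or not.

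In the small-radius regime $r<(4e)^{-\beta(t)}\min\{1,\|J_\psi\|_{L^\infty}^{-1}\}$, so that $g_\psi(r)<1$, I would write $\fint_B|f\circ\psi-\fint_B f\circ\psi|\leq 2\fint_B|f\circ\psi-\fint_{\tilde B}f|$ with $\tilde B=B(\psi(x_0),g_\psi(r))$, change variables, and cover the image $\psi(B)$ by the Whitney balls $(O_j)_j$ exactly as before. The gain over the $\BMOF$ computation is that each oscillation is now controlled by the $\LMOF$ norm with the \emph{right decay}: $\fint_{2O_j}|f-\fint_{2O_j}f|\leq \|f\|_{\LMOF}/F(1-\ln 2r_j)$ and, since $4O_j\subset\tilde B$ by Lemma \ref{p2}, $|\fint_{2O_j}f-\fint_{\tilde B}f|\leq \|f\|_{\LMOF}/F(1-\ln g_\psi(r))$, the latter bound being \emph{independent of $r_j$}. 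Consequently the delicate dyadic splitting of the Whitney sum into the $U_k$'s used in Theorem \ref{th1} is no longer needed: both sums collapse to $\sum_j|O_j|\leq\|J_\psi\|_{L^\infty}|B|$.

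It then remains to absorb the prefactor $F(1-\ln r)$. For the diagonal piece one checks that $\frac{1-\ln r}{1-\ln 2r_j}$ is bounded by an absolute constant (when $2r_j\le r$ it is $\le 1$, and when $2r_j>r$ the bound $r_j\le\|J_\psi\|_{L^\infty}^{1/2}r$ together with $-\ln r\gtrsim\beta(t)+\ln\|J_\psi\|_{L^\infty}$ forces the ratio close to $1$), so $F(1-\ln r)/F(1-\ln 2r_j)\lesssim 1$ by monotonicity and the multiplicativity (c) of Definition \ref{def1}. For the off-diagonal piece I would write $F(1-\ln r)\lesssim F\!\big(\tfrac{1-\ln r}{1-\ln g_\psi(r)}\big)F(1-\ln g_\psi(r))$ and invoke \eqref{eqg} to get $F(1-\ln r)/F(1-\ln g_\psi(r))\lesssim F(1+\beta(t))$. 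Combining with Lemma \ref{lem5} for the Jacobians and $\beta(t)=e^{V(t)}$ yields the bound $e^{C\|\textnormal{div}\, v\|_{L^1_tL^\infty}}F(e^{CV(t)})\|f\|_{\LMOF}$ in this regime. In the complementary regime $r\geq(4e)^{-\beta(t)}\min\{1,\|J_\psi\|_{L^\infty}^{-1}\}$ I would simply use $\fint_B|f\circ\psi-\fint_B f\circ\psi|\leq 2\|f\|_{L^\infty}$ and $|\ln r|\lesssim\beta(t)+|\ln\|J_\psi\|_{L^\infty}|$, so that $F(1-\ln r)\lesssim F(e^{CV(t)})e^{C\|\textnormal{div}\, v\|_{L^1_tL^\infty}}$ by property (c); this is precisely where the $L^\infty$ norm enters crucially.

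For the second part $F(1-\ln r_1)|\fint_{B_2}f\circ\psi-\fint_{B_1}f\circ\psi|$ with $2B_2\subset B_1$ and $r_1\le1$, I would mirror the three-case decomposition of Step 2 of Theorem \ref{th1}, introducing $\tilde B_i=B(\psi(x_i),g_\psi(r_i))$ and splitting the difference through the intermediate means $\fint_{\tilde B_2}f$, $\fint_{2\tilde B_1}f$, $\fint_{\tilde B_1}f$. The two end terms reduce to the weighted-oscillation estimate already obtained, the middle difference is handled by the second part of the $\LMOF$ norm of $f$ after controlling $F(1-\ln g_\psi(r_1))$ against $F(1-\ln r_1)$ via \eqref{eqg}, and \eqref{eq1} together with $\|f\|_{L^\infty}$ disposes of the remaining term and of the large-ball cases. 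The main obstacle throughout is the bookkeeping of the weight ratios $F(1-\ln r_i)/F(1-\ln g_\psi(r_i))$, which is resolved uniformly by \eqref{eqg} and the polynomial-growth property \eqref{eq}; once this is in place the argument is strictly simpler than Theorem \ref{th1}, since the $r_j$-independence of the $\LMOF$ bound removes the Whitney sum-splitting and the $L^\infty$ bound trivializes every non-small ball.
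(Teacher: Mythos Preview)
Your proposal is correct and follows essentially the same route as the paper: Whitney covering of $\psi(B)$, use of the $\LMOF$ norm to obtain $r_j$-independent bounds on the oscillations (thus avoiding the dyadic $U_k$ splitting of Theorem~\ref{th1}), $L^\infty$ for large balls, and control of the weight ratios via \eqref{eqg} and property~(c). The only noteworthy differences are cosmetic: you carry the Jacobian factor $\min\{1,\|J_\psi\|_{L^\infty}^{-1}\}$ into the case threshold (the paper drops it here), you squeeze an absolute constant out of the diagonal ratio $\tfrac{1-\ln r}{1-\ln 2r_j}$ where the paper is content with $F(1+\beta(t))$ via $r_j\le g_\psi(r)$, and you invoke three cases in Step~2 while the paper needs only two (the weight depends on $r_1$ alone, so the $L^\infty$ bound handles all large-$r_1$ configurations at once).
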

\begin{proof}
Identically to the proof of the Theorem \ref{p2}, we will proceed in two steps; the first one concerns the first term of the norm and the second one is devoted to the other term.\\
\vspace{0,2cm}

$\bullet$ \textbf{Step1: Estimate of the first part of the norm.} One distinguishes two cases:\\

\underline{\textbf{Case 1}} : $r\leq (4e)^{-\beta(t)}.$
We may write
\begin{eqnarray*}
F(1- \ln r)\fint_{B}\Big\vert f\circ\psi-\fint_{B}f\circ \psi\Big\vert \dx  &\leq & 2F(1- \ln r)\fint_{B}\Big\vert f\circ \psi -\fint_{\tilde{B}}f\Big\vert \dx. 
\end{eqnarray*}
Recall that $\tilde{B}$ is the ball of center $\psi(x_0)$ and radius $g_\psi(r).$
A change of variable gives
\begin{eqnarray*} 
F(1- \ln r)\fint_{B}\Big\vert f\circ\psi-\fint_{B}f\circ \psi\Big\vert \dx &\lesssim &\Vert J_{\psi^{-1}}\Vert_{L^{\infty}}\frac{F(1- \ln r)}{\vert B\vert}\int_{\psi(B)} \Big\vert f -\fint_{\tilde{B}}f\Big\vert dx. 
\end{eqnarray*}
Using the Whitney covering lemma used in the proof of  the Theorem \ref{p2} we get
\begin{eqnarray*}
F(1- \ln r)\fint_{B}\Big\vert f\circ\psi-\fint_{B}f\circ \psi\Big\vert \dx &\lesssim & \Vert J_{\psi^{-1}}\Vert_{L^{\infty}}\frac{F(1- \ln r)}{\vert B\vert}\sum_{j}\vert O_{j}\vert \fint_{2O_{j}}\Big\vert f-\fint_{\tilde{B}}f\Big\vert  \\ &\lesssim & \Vert J_{\psi^{-1}}\Vert_{L^{\infty}} \big( \hbox{I}_{1}+\hbox{I}_{2}\big),
\end{eqnarray*}
 where
$$
I_{1}\triangleq\frac{F(1- \ln r)}{\vert B\vert}\sum_{j}\vert O_{j}\vert \fint_{2O_{j}}\Big\vert f-\fint_{2O_{j}}f\Big\vert,
$$
and
$$
\hbox{I}_{2}\triangleq \frac{F(1- \ln r)}{\vert B\vert}\sum_{j}\vert O_{j}\vert\Big\vert\fint_{2O_{j}}f-\fint_{\tilde{B}}f\Big\vert.
$$
 In view of the the polynomial growth property of $F$, we have
\begin{eqnarray*}
\hbox{I}_{1} &=&\frac{1}{\vert B\vert}\sum_{j}\vert O_{j}\vert F\Big(\frac{1- \ln r}{1- \ln 2r_j}\big(1- \ln (2r_j)\big)\Big)\fint_{2O_{j}}\Big\vert f-\fint_{2O_{j}}f\Big\vert \\ &\lesssim & \frac{1}{\vert B\vert}\sum_{j}\vert O_{j}\vert F\Big(\frac{1- \ln r}{1- \ln( 2r_j)}\Big)F(1- \ln(2r_j))\fint_{2O_{j}}\Big\vert f-\fint_{2O_{j}}f\Big\vert \\ &\lesssim & \frac{1}{\vert B\vert}\sum_{j}\vert O_{j}\vert F\Big(\frac{1- \ln r}{1- \ln(2r_j)}\Big)\Vert f\Vert_{\LMOF} 
\end{eqnarray*}
As $r_j\leq g_\psi(r)$ and by \eqref{eqg} one has
\begin{eqnarray*}
\frac{1- \ln r}{1- \ln 2r_j}&\lesssim &\frac{1- \ln r}{1- \ln 2g_\psi(r)}\\&\lesssim & 1+\beta(t).
\end{eqnarray*}
Consequently we get in view of \eqref{eqn},
$$
\hbox{I}_{1}\lesssim\Vert J_{\psi}\Vert_{L^{\infty}}F\big(\beta(t)\big)\Vert f \Vert_{\LMOF}.
$$
Since $4O_{j}\subset \tilde{B}$, $g_\psi(r)\leq 1$ and by the definition of the second part of the $\BMOF$-norm, $\hbox{I}_2$ can be estimated as follows  
\begin{eqnarray*}
\hbox{I}_{2} &\lesssim &\frac{1}{\vert B\vert}\sum_{j}\vert O_{j}\vert F\Big(\frac{1-\ln r}{1-\ln g_{\psi}(r)}\big(1-\ln g_\psi (r)\big)\Big)  \Big\vert\fint_{2O_{j}}f-\fint_{\tilde{B}}f\Big\vert \\ &\lesssim & \frac{1}{\vert B\vert}\sum_{j}\vert O_{j}\vert F\big(1+\beta(t)\big) F(1-\ln g_{\psi}(r))\Big\vert\fint_{2O_{j}}f-\fint_{\tilde{B}}f\Big\vert \\ & \lesssim & \Vert J_{\psi}\Vert_{L^{\infty}}F\big(\beta(t)\big)\Vert f \Vert_{\LMOF}.
\end{eqnarray*}

\underline{\textbf{Case 2}} : $1\geqslant r\geq (4e)^{-\beta(t)}.$ Under this assumption we have  $\vert\ln r\vert\lesssim \beta(t)$ and  then we can immediately  deduce that
\begin{eqnarray*}
F(1- \ln r)\fint_{B}\Big\vert f\circ\psi -\fint_{B}f\circ \psi\Big\vert   &\lesssim & F\big(1+\beta(t)\big)\fint_{B}\Big\vert f\circ\psi\Big\vert  \\ &\lesssim & F\big(\beta(t)\big)\Vert f\Vert_{L^{\infty}}. 
\end{eqnarray*}
\vspace{0,2cm}

\textbf{Step 2: Estimate of the second part of the norm.}
Denote $B_i=B(x_i,r_i)$ and $\tilde{B}_i=(x_i,g_{\psi}(r_i))$ for $i\in \{1, 2\} $ with $2B_2\subset B_1$ and $r_1<1$. We shall estimate $J$ defined by,
\begin{eqnarray*}
J &\triangleq & F(1-\ln r_{1})\Big\vert\fint_{B_{2}}f\circ\psi-\fint_{B_{1}}f\circ\psi\Big\vert .
\end{eqnarray*}
There are two cases to discuss depending on the size of $r_1$. \\
\underline{\textbf{Case 1}} : $r_1\leq (4e)^{-\beta(t)}$.
We write
\begin{eqnarray*}
J  &\leq & J_{1}+J_{2}+J_{3},
\end{eqnarray*}
where
\begin{eqnarray*}
J_1 & \triangleq &F(1-\ln r_{1})\Big(\Big\vert\fint_{B_{2}}f\circ\psi-\fint_{\tilde{ B_{2}}}f\Big\vert + \Big\vert\fint_{B_{1}}f\circ\psi-\fint_{\tilde{ B_{1}}}f\Big\vert\Big) \\ 
J_2 & \triangleq &F(1-\ln r_{1})\Big\vert\fint_{\tilde{B}_{2}}f-\fint_{2\tilde{B}_{1}}f\Big\vert \\
J_3 & \triangleq & F(1-\ln r_{1})\Big\vert\fint_{\tilde{B}_{1}}f-\fint_{2\tilde{B}_{1}}f\Big\vert.
\end{eqnarray*}
Reproducing the same arguments as for  the  case 1 from step 1 we can estimate $J_1$. For $J_2$,  we use  the polynomial growth property  of $F$ with the inequality \eqref{eqg} to get
\begin{eqnarray*}
J_2 &\leq & F\Big(\frac{1-\ln r_1}{1-\ln 2g_{\psi}(r_1)}\big(1-\ln 2g_{\psi}(r_1)\big)\Big)\Big\vert\fint_{\tilde{B}_{2}}f-\fint_{2\tilde{B}_{1}}f\Big\vert \\ &\leq & F\Big(\frac{1-\ln r_1}{1-\ln 2g_{\psi}(r_1)}\Big)F\big(1-\ln 2g_{\psi}(r_1)\big)\Big\vert\fint_{\tilde{B}_{2}}f-\fint_{2\tilde{B}_{1}}f\Big\vert \\ &\leq & F\big(1+\beta(t)\big)\Vert f\Vert_{\LMOF}.
\end{eqnarray*}
Similarly, 
\begin{eqnarray*}
J_3& \leq & F\Big(\frac{1-\ln r_1}{1-\ln 2g_{\psi}(r_1)}\Big)F\big(1-\ln 2g_{\psi}(r_1)\big)\Big\vert\fint_{\tilde{B}_{1}}f-\fint_{2\tilde{B}_{1}}f\Big\vert \\ &\leq & F\big(1+\beta(t)\big)\Vert f\Vert_{\LMOF}. 
\end{eqnarray*}
\underline{\textbf{Case 2}} : $r_1\geqslant (4e)^{-\beta(t)}$. Since $F$ is non-decreasing then,
\begin{eqnarray*}
 J&\leq& F(1-\ln r_{1})\Big(\fint_{B_{2}}\big\vert f\circ\psi\big\vert+\fint_{B_{1}}\big\vert f\circ\psi\big\vert\Big)\\
 &\lesssim& F(\beta(t))\|f\|_{L^\infty}.
\end{eqnarray*}
This completes the proof Proposition \ref{prop2}.
 \end{proof}
 Now we have all the necessary ingredients for the proof of  Theorem \ref{th2}
\subsection{Proof of Theorem 2}
We set $ g(t,x)=f(t,\psi(t,x))$. Then, in view of (\ref{12}), we see that $g$ satisfies the following equation
$$
\partial_t g(t,x)+(\textnormal{div}\, v)(t,\psi(t,x))g(t,x)=0, \quad g(0,x)=f_0(x).
$$
It follows that
\begin{eqnarray*}
g(t,x)&=&f_0(x)e^{-\int_0^t (\textnormal{div}\, v)(\tau,\psi(\tau))d\tau}\\
&=& f_0(x)+f_0(x)\Big( e^{-\int_0^t (\textnormal{div}\, v)(\tau,\psi(\tau))d\tau}-1\Big).
\end{eqnarray*}
According to the law product stated in Proposition \ref{prod}, we have
\begin{equation*}
\Vert g(t)\Vert_{\BMOF\cap L^p}\leq C \Vert f_0\Vert_{\BMOF\cap L^p}\Big(1+\big\Vert e^{-\int_0^t (\textnormal{div}\, v)(\tau,\psi(\tau))d\tau}-1\big\Vert_{\LMOF\cap L^\infty}\Big).
\end{equation*}
Therefore by applying the assertion (ii) of  Proposition \ref{alg} to the function $x\longmapsto e^x-1$ , we get
$$
\Vert g(t)\Vert_{\BMOF\cap L^p}\leq C \Vert f_0\Vert_{\BMOF\cap L^p}\Big(1+e^{C\Vert \textnormal{div}\,  v\Vert_{L^1_tL^{\infty}}} \int_0^t\Vert (\textnormal{div}\, v)(\tau,\psi(\tau))\Vert_{\LMOF\cap L^\infty}d\tau\Big).
$$
Furthermore, according to Proposition \ref{prop2} we infer that
$$
\Vert g(t)\Vert_{\BMOF\cap L^p}\leq C\Vert f_0\Vert_{\BMOF\cap L^p}\Big(1+e^{C\Vert \textnormal{div}\, v\Vert_{L^1_tL^{\infty}}} F\Big(e^{C\Vert v\Vert_{L^1_tLL}}\Big)\Vert\textnormal{div}\, v\Vert_{L^1_t(\LMOF\cap L^\infty)}\Big).
$$ 
Finally,  Theorem \ref{th1} gives
\begin{eqnarray*}
\Vert f(t)\Vert_{\BMOF\cap L^p}\leq C\Vert f_0\Vert_{\BMOF\cap L^p}e^{C\Vert \textnormal{div}\, v\Vert_{L^1_tL^{\infty}}}F\Big(e^{C\Vert v\Vert_{L^1_tLL}}\Big)\Big(1+ F\big(e^{C\Vert v\Vert_{L^1_TLL}}\big)\Vert\textnormal{div}\, v\Vert_{L^1_t(\LMOF\cap L^\infty)}\Big).
\end{eqnarray*}
This completes the proof  of the theorem. 
\section{Some classical estimates}
 The aim of this section is to highlight two useful estimates for the system (\ref{EC}), those will be of great importance for obtaining a lower bound of the lifespan of
the solution $v_\varepsilon$. In the first instance we shall recall  classical energy estimates for the full system, afterwards, we lead a short discussion on  Strichartz estimates for the wave operator.
\subsection{Energy estimates}
The following energy estimates are  classical and for the proof we refer the reader  for instance to  \cite{D-H,H,K-M2}.
\begin{proposition}\label{enr}
Let $(v_\varepsilon , c_\varepsilon)$ be a smooth solution of $\eqref{EC}$. For $s>0$ there exists a constant $C>0$ such that 
$$
\Vert (v_\varepsilon ,c_\varepsilon )(t)\Vert_{H^s}\leq C\Vert(v_{0,\varepsilon},c_{0,\varepsilon})\Vert_{H^s}e^{CV_{\varepsilon}(t)}, \quad \forall t\geq 0
$$
with
$$
V_\varepsilon(t)\triangleq\Vert \nabla v_\varepsilon\Vert_{L_t^1L^\infty}+\Vert\nabla c_\varepsilon\Vert_{L_t^1L^\infty}.
$$
\end{proposition}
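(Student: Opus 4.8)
The plan is to treat \eqref{EC} as a quasilinear symmetric hyperbolic system in the unknown $U_\varepsilon=(v_\varepsilon,c_\varepsilon)$ and to run an $H^s$ energy estimate in which the singular $\frac1\varepsilon$ terms disappear by skew-symmetry. First I would rewrite the system as
$$
\partial_t U_\varepsilon+v_\varepsilon\cdot\nabla U_\varepsilon+\tfrac1\varepsilon\,\mathcal{A}(D)U_\varepsilon+\bar\gamma\,c_\varepsilon\,\mathcal{A}(D)U_\varepsilon=0,
$$
where $\mathcal{A}(D)(v,c)\triangleq(\nabla c,\textnormal{div}\, v)$ is a constant-coefficient first-order operator. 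The decisive structural fact is that $\mathcal{A}(D)$ is skew-adjoint on $L^2$: for $W=(\tilde v,\tilde c)$ one integrates by parts to get $\langle \mathcal{A}(D)W,W\rangle_{L^2}=\langle\nabla \tilde c,\tilde v\rangle+\langle\textnormal{div}\,\tilde v,\tilde c\rangle=0$. Since $\mathcal{A}(D)$ commutes with $\Delta_q$, this is exactly what kills the $\varepsilon^{-1}$ contribution in the energy.

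Next I localize in frequency. Setting $U_q=\Delta_q U_\varepsilon$ and taking the $L^2$ scalar product of the $\Delta_q$-localized equation with $U_q$, the term $\frac1\varepsilon\langle\mathcal{A}(D)U_q,U_q\rangle$ vanishes identically. The transport part gives, via $\langle v_\varepsilon\cdot\nabla U_q,U_q\rangle=-\tfrac12\int(\textnormal{div}\, v_\varepsilon)|U_q|^2$, a contribution bounded by $\Vert\nabla v_\varepsilon\Vert_{L^\infty}\Vert U_q\Vert_{L^2}^2$, together with a commutator $[\Delta_q,v_\varepsilon\cdot\nabla]U_\varepsilon$ controlled by the classical estimate $\big\Vert(2^{qs}\Vert[\Delta_q,v_\varepsilon\cdot\nabla]U_\varepsilon\Vert_{L^2})_q\big\Vert_{\ell^2}\lesssim\Vert\nabla v_\varepsilon\Vert_{L^\infty}\Vert U_\varepsilon\Vert_{H^s}$, valid for $s>0$.

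The delicate term is the quadratic nonlinearity $\bar\gamma\,c_\varepsilon\,\mathcal{A}(D)U_\varepsilon$, and this is where I expect the main obstacle: a naive bound produces a factor $2^q\Vert c_\varepsilon\Vert_{L^\infty}$, which is both too singular in frequency and involves the wrong norm. I resolve it exactly as for the transport term. Splitting $\langle\Delta_q(c_\varepsilon\mathcal{A}(D)U_\varepsilon),U_q\rangle$ into $\langle c_\varepsilon\mathcal{A}(D)U_q,U_q\rangle+\langle[\Delta_q,c_\varepsilon]\mathcal{A}(D)U_\varepsilon,U_q\rangle$, I note that $\mathcal{A}(D)U_q\cdot U_q=\textnormal{div}(c_q v_q)$ with $U_q=(v_q,c_q)$, so an integration by parts gives $\langle c_\varepsilon\mathcal{A}(D)U_q,U_q\rangle=-\int\nabla c_\varepsilon\cdot(c_q v_q)$, bounded by $\Vert\nabla c_\varepsilon\Vert_{L^\infty}\Vert U_q\Vert_{L^2}^2$. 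The commutator obeys $\big\Vert(2^{qs}\Vert[\Delta_q,c_\varepsilon]\mathcal{A}(D)U_\varepsilon\Vert_{L^2})_q\big\Vert_{\ell^2}\lesssim(\Vert\nabla c_\varepsilon\Vert_{L^\infty}+\Vert\nabla v_\varepsilon\Vert_{L^\infty})\Vert U_\varepsilon\Vert_{H^s}$, using $\Vert c_\varepsilon\Vert_{H^s}\le\Vert U_\varepsilon\Vert_{H^s}$. Thus every contribution is controlled by $(\Vert\nabla v_\varepsilon\Vert_{L^\infty}+\Vert\nabla c_\varepsilon\Vert_{L^\infty})$ times $H^s$ quantities, with no surviving power of $\varepsilon^{-1}$ and no $L^\infty$ norm of $U_\varepsilon$ itself.

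Finally I multiply the per-block inequality by $2^{2qs}$, sum over $q\ge-1$, and apply Cauchy--Schwarz in $q$ (with $\sum_q c_q^2\le1$) to obtain $\frac{d}{dt}\Vert U_\varepsilon(t)\Vert_{H^s}\lesssim \big(\Vert\nabla v_\varepsilon(t)\Vert_{L^\infty}+\Vert\nabla c_\varepsilon(t)\Vert_{L^\infty}\big)\Vert U_\varepsilon(t)\Vert_{H^s}$. Gronwall's lemma then yields $\Vert U_\varepsilon(t)\Vert_{H^s}\le\Vert U_{0,\varepsilon}\Vert_{H^s}e^{CV_\varepsilon(t)}$ with $V_\varepsilon(t)=\Vert\nabla v_\varepsilon\Vert_{L^1_tL^\infty}+\Vert\nabla c_\varepsilon\Vert_{L^1_tL^\infty}$, which is the claim. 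A purely physical-space variant, using Kato--Ponce / Moser commutator estimates on $\langle\Lambda^s(\cdots),\Lambda^s U_\varepsilon\rangle$ with $\Lambda=(1-\Delta)^{1/2}$, would give the same bound and explains why the result is recorded as classical.
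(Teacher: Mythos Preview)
The paper does not actually prove this proposition; it records the estimate as classical and refers the reader to \cite{D-H,H,K-M2}. Your argument is precisely the standard one found in those references: exploit the skew-adjointness of the constant-coefficient operator $\mathcal{A}(D)(v,c)=(\nabla c,\textnormal{div}\,v)$ to kill the singular $\varepsilon^{-1}$ term, handle the transport and the quadratic $c_\varepsilon\,\mathcal{A}(D)U_\varepsilon$ contributions by integration by parts plus commutator estimates (so that only $\Vert\nabla v_\varepsilon\Vert_{L^\infty}$ and $\Vert\nabla c_\varepsilon\Vert_{L^\infty}$ appear), and close by Gronwall. Your sketch is correct and matches the approach the paper is implicitly invoking.
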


\subsection{ Strichartz estimates}
The main interest of using Strichartz estimates is to deal with the ill-prepared data in the presence of the singular terms in $\frac1\varepsilon$. Actually, it has been shown that the average in time of the compressible and the acoustic parts, which are governed by a coupling non linear wave equations, disappear when the Mach number approaches zero. The details of this assumption has been discussed for instance in \cite{H} and \cite{H-S} for initial data in Besov spaces, but for the convenience of the reader we briefly outline the main arguments of the proof. The system (\ref{EC}) can be rewritten under the form
\begin{displaymath}
\left\{ \begin{array}{ll}
\partial_{t}v_\varepsilon+\frac{1}{\varepsilon}\nabla c_\varepsilon= -v_\varepsilon.\nabla v_\varepsilon-\bar{\gamma}c_\varepsilon\nabla c_\varepsilon \triangleq f_\varepsilon &\\
\partial_{t}c_\varepsilon+\frac{1}{\varepsilon}\textnormal{div}\, v_\varepsilon= -v_\varepsilon.\nabla c_\varepsilon-\bar{\gamma}c_\varepsilon\textnormal{div}\, v_\varepsilon \triangleq g_\varepsilon &\\
(v_\varepsilon ,c_\varepsilon)_{| t=0}=(v_{\varepsilon ,0},c_{\varepsilon ,0}).
\end{array} \right. 
\end{displaymath} 
We denote by $\Q v_\varepsilon \triangleq\nabla \Delta^{-1}\textnormal{div}\, v_\varepsilon$ the compressible part of the velocity $v_\varepsilon$. Then the complex-valued functions
$$
\Gamma_\varepsilon\triangleq\Q v_\varepsilon -i\nabla\vert\mathrm{D}\vert^{-1}c_\varepsilon \quad \textnormal{and}\quad \Upsilon_\varepsilon\triangleq \vert \mathrm{D}\vert^{-1}\textnormal{div}\, v_\varepsilon +i c_\varepsilon
$$
satisfy the following wave equations
\begin{equation}\label{st1}
(\partial_t +\frac{i}{\varepsilon}\vert \mathrm{D}\vert)\Gamma_\varepsilon =\Q f_\varepsilon -i\nabla \vert \mathrm{D}\vert^{-1}g_\varepsilon
\end{equation}
and
\begin{equation}\label{st2}
(\partial_t +\frac{i}{\varepsilon}\vert \mathrm{D}\vert)\Upsilon_\varepsilon =\vert \mathrm{D}\vert^{-1}\textnormal{div}\, f_\varepsilon -ig_\varepsilon
\end{equation}
with $\vert \mathrm{D}\vert=(-\Delta)^{\frac{1}{2}}$. \\

Now we can  following Strichartz estimates whose proof can be found for instance in \cite{B-C-D,D-H,G}.
\begin{lemma}\label{lem2}
Let $\varphi$ be a solution of the wave equation 
$$
(\partial_t+\frac{i}{\varepsilon}\vert \mathrm{D}\vert^{-1})\varphi=G, \quad \varphi_{\vert t=0}=\varphi_0.
$$
Then, there exists a constant $C>0$ such that for all $T>0$ and  $2<p\leq+\infty$,
$$
\Vert\varphi\Vert_{L_{T}^{r}L^{p}}\leq C\varepsilon^{\frac{1}{4}-\frac{1}{2p}}\Big(\Vert \varphi_0\Vert_{\dot{B}_{2,1}^{\frac{3}{4}-\frac{3}{2p}}}+\int_{0}^{t}\Vert G(\tau)\Vert_{\dot{B}_{2,1}^{\frac{3}{4}-\frac{3}{2p}}}d\tau\Big),
$$
with $r=4+\frac{8}{p-2}$.
\end{lemma}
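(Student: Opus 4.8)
The plan is to treat the homogeneous and forced parts separately via Duhamel's formula and to reduce everything to the free half-wave propagator $U_\varepsilon(t)\triangleq e^{-\frac{it}{\varepsilon}|\mathrm{D}|}$ (note that the operator in the statement should read $|\mathrm{D}|$ rather than $|\mathrm{D}|^{-1}$, as dictated by \eqref{st1}--\eqref{st2}). Writing $\varphi(t)=U_\varepsilon(t)\varphi_0+\int_0^t U_\varepsilon(t-s)G(s)\,ds$, the first term is to be handled by the free estimate $\Vert U_\varepsilon(\cdot)\varphi_0\Vert_{L^r_TL^p}\lesssim \varepsilon^{1/r}\Vert \varphi_0\Vert_{\dot B^{s}_{2,1}}$ with $s=\frac34-\frac{3}{2p}$. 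Because the right-hand side carries only an $L^1$ norm in time of $G$, the forced term does not require the full Christ--Kiselev machinery: by Minkowski's integral inequality in time and the time-translation invariance of $U_\varepsilon$, one bounds $\Vert\int_0^t U_\varepsilon(t-s)G(s)\,ds\Vert_{L^r_TL^p}\le \int_0^T\Vert U_\varepsilon(\cdot-s)G(s)\Vert_{L^r_tL^p}\,ds$, and each summand is controlled by the free estimate applied with datum $G(s)$. Thus the whole lemma reduces to the single homogeneous Strichartz inequality.

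Next I would extract the $\varepsilon$ factor by a pure time rescaling. Setting $\tau=t/\varepsilon$ and $w(\tau)\triangleq U_\varepsilon(\varepsilon\tau)\varphi_0=e^{-i\tau|\mathrm{D}|}\varphi_0$, the change of variable $t=\varepsilon\tau$ gives $\Vert U_\varepsilon(\cdot)\varphi_0\Vert_{L^r_T L^p}=\varepsilon^{1/r}\Vert w\Vert_{L^r_{\tau}L^p([0,T/\varepsilon])}$. Since the initial datum, and hence its Besov norm, is untouched by a time dilation, it suffices to prove the scale-invariant estimate $\Vert e^{-i\tau|\mathrm{D}|}\varphi_0\Vert_{L^r_\tau L^p}\lesssim \Vert \varphi_0\Vert_{\dot B^{s}_{2,1}}$ at $\varepsilon=1$. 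The announced power is then automatic, since $\frac1r=\frac{p-2}{4p}=\frac14-\frac{1}{2p}$, which is exactly the exponent of $\varepsilon$ in the statement.

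For the $\varepsilon=1$ estimate I would run the classical dyadic dispersive argument in $\RR^2$. Applying $\dot\Delta_q$ and using stationary phase for the phase $\xi\mapsto|\xi|$ on the shell $|\xi|\sim 2^q$ yields the frequency-localized bound $\Vert e^{-i\tau|\mathrm{D}|}\dot\Delta_q f\Vert_{L^\infty}\lesssim 2^{3q/2}|\tau|^{-1/2}\Vert\dot\Delta_q f\Vert_{L^1}$, the exponent $1/2=(d-1)/2$ reflecting the degeneracy of the Hessian in the radial direction. Interpolating this with the conservation $\Vert e^{-i\tau|\mathrm{D}|}\dot\Delta_q f\Vert_{L^2}=\Vert\dot\Delta_q f\Vert_{L^2}$ produces $L^{p'}\to L^p$ decay at rate $|\tau|^{-(1-2/p)/2}$ with frequency factor $2^{3q(1-2/p)/2}$. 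A standard $TT^\ast$ argument, namely Hardy--Littlewood--Sobolev in the time variable, converts this into the block estimate $\Vert e^{-i\tau|\mathrm{D}|}\dot\Delta_q f\Vert_{L^r_\tau L^p}\lesssim 2^{qs}\Vert\dot\Delta_q f\Vert_{L^2}$, with admissibility $\frac1r=\frac{d-1}{2}\big(\frac12-\frac1p\big)$ and critical regularity $s=d\big(\frac12-\frac1p\big)-\frac1r=\frac34-\frac{3}{2p}$ for $d=2$. Summing over $q$ in $\ell^1$ — which is precisely why the datum is measured in $\dot B^s_{2,1}$ rather than $\dot H^s$ — closes the estimate.

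The main obstacle is the dispersive/stationary-phase bound together with the endpoint bookkeeping. The non-degeneracy required for stationary phase holds only transversally to the radial direction, so the decay is $|\tau|^{-1/2}$ and not $|\tau|^{-1}$; this has to be combined carefully with the frequency truncation to keep the precise power of $2^q$. The $\ell^1$-Besov summation is what renders the borderline case $p=\infty$ (where $r=4$) harmless, as it removes the logarithmic divergence an $\ell^2$ sum would produce; had one instead worked in $\dot H^s$ and treated the forcing by Christ--Kiselev, only the mild restriction $r>2$ (satisfied here) would be needed. All of these ingredients are classical and can be quoted from \cite{B-C-D,D-H,G}, so the content specific to this lemma is merely the sharp $\varepsilon$-power, which the rescaling of the second step delivers directly.
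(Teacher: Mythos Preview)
Your argument is correct and is precisely the classical route (Duhamel, time rescaling to pull out $\varepsilon^{1/r}=\varepsilon^{\frac14-\frac{1}{2p}}$, frequency-localized dispersive bound $|\tau|^{-1/2}$ in $\RR^2$, $TT^\ast$/HLS, and $\ell^1$-summation into $\dot B^{s}_{2,1}$); you also rightly flag the $|\mathrm D|^{-1}$ typo. The paper does not give its own proof of this lemma but simply refers to \cite{B-C-D,D-H,G}, and your sketch is exactly the argument one finds there, so there is nothing to compare.
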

As an application we get the following result.
\begin{coro}\label{st}
Let $s>0$  and  $(v_{0,\varepsilon},c_{0,\varepsilon})$ be a family in $H^{2+s}$. Then any solution of (\ref{EC})
defined in the time interval $[0, T]$ satisfies 
\begin{equation}\label{estq}
\Vert (\Q v_\varepsilon,c_\varepsilon)\Vert_{L_T^{4}L^{\infty}}\leq C_0^\varepsilon\varepsilon^{\frac{1}{4}}(1+T)e^{CV_{\varepsilon}(T)}.
\end{equation}
Moreover, there exists a positive real number $\eta$ which depends only on $s$ such that 
\begin{equation}
\Vert(\textnormal{div}\, v_\varepsilon,\nabla c_\varepsilon)\Vert_{L_T^{1}B^{s/3}_{\infty,\infty}}\leq C_0^\varepsilon(1+T^{\frac{7}{4}}) \varepsilon^{\eta}e^{CV_\varepsilon(T)}.
\end{equation}
Where
$$
V_\varepsilon(T)\triangleq\Vert \nabla v_\varepsilon\Vert_{L_T^1L^\infty}+\Vert\nabla c_\varepsilon\Vert_{L_T^1L^\infty},
$$
and $C_0^\varepsilon$ depends only on the quantity $\Vert (v_{0,\varepsilon},c_{0,\varepsilon})\Vert_{H^{2+s}}$ and with polynomial growth.
\end{coro}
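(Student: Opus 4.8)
The plan is to read off $\Q v_\varepsilon$ and $c_\varepsilon$ from the complex unknowns $\Gamma_\varepsilon$ and $\Upsilon_\varepsilon$ and then feed the wave equations \eqref{st1}--\eqref{st2} into the Strichartz estimate of Lemma \ref{lem2}. Since $\Q v_\varepsilon=\textnormal{Re}\,\Gamma_\varepsilon$ and $c_\varepsilon=\textnormal{Im}\,\Upsilon_\varepsilon$, it suffices to bound $\Gamma_\varepsilon,\Upsilon_\varepsilon$ in $L_T^4 L^\infty$. First I would apply Lemma \ref{lem2} at the endpoint $p=\infty$ (so that $r=4$ and the regularity index is $3/4$), which gives
\begin{equation*}
\Vert(\Gamma_\varepsilon,\Upsilon_\varepsilon)\Vert_{L_T^4 L^\infty}\lesssim \varepsilon^{1/4}\Big(\Vert(\Gamma_\varepsilon,\Upsilon_\varepsilon)(0)\Vert_{\dot{B}_{2,1}^{3/4}}+\int_0^T\Vert(\Q f_\varepsilon-i\nabla\vert\mathrm{D}\vert^{-1}g_\varepsilon,\ \vert\mathrm{D}\vert^{-1}\textnormal{div}\, f_\varepsilon-ig_\varepsilon)\Vert_{\dot{B}_{2,1}^{3/4}}d\tau\Big).
\end{equation*}
The operators $\Q$, $\nabla\vert\mathrm{D}\vert^{-1}$ and $\vert\mathrm{D}\vert^{-1}\textnormal{div}$ are order-zero Fourier multipliers, hence bounded on $\dot{B}_{2,1}^{3/4}$, so the data term is controlled by $\Vert(v_{0,\varepsilon},c_{0,\varepsilon})\Vert_{\dot{B}_{2,1}^{3/4}}\lesssim\Vert(v_{0,\varepsilon},c_{0,\varepsilon})\Vert_{H^{2+s}}$, using $L^2$ for low frequencies and $\dot H^{2+s}$ for high ones.

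Next I would estimate the source. The right-hand sides are quadratic, $f_\varepsilon=-v_\varepsilon\cdot\nabla v_\varepsilon-\bar{\gamma} c_\varepsilon\nabla c_\varepsilon$ and $g_\varepsilon=-v_\varepsilon\cdot\nabla c_\varepsilon-\bar{\gamma} c_\varepsilon\textnormal{div}\, v_\varepsilon$, so standard Besov product laws (together with $H^{2+s}\hookrightarrow \dot{B}_{2,1}^{3/4}$, valid since $2+s>1$) bound their $\dot{B}_{2,1}^{3/4}$ norm by $\Vert(v_\varepsilon,c_\varepsilon)\Vert_{H^{2+s}}^2$. Inserting the energy estimate of Proposition \ref{enr}, $\Vert(v_\varepsilon,c_\varepsilon)(\tau)\Vert_{H^{2+s}}\le C_0^\varepsilon e^{CV_\varepsilon(\tau)}$, and integrating in time (using that $V_\varepsilon$ is nondecreasing) produces a factor $(1+T)e^{CV_\varepsilon(T)}$. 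Collecting the two contributions yields \eqref{estq}, the constant $C_0^\varepsilon$ absorbing the squared data norm while keeping polynomial growth.

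For the second estimate I would run a frequency splitting at a threshold $2^N=\varepsilon^{-\delta}$ and optimize $\delta=\delta(s)>0$. Writing $\textnormal{div}\, v_\varepsilon=\textnormal{div}\,\Q v_\varepsilon$ and handling $\nabla c_\varepsilon$ the same way, each dyadic block costs one derivative, i.e. a factor $2^q$ by Bernstein. For the low and intermediate frequencies $q\le N$ I would use \eqref{estq} together with $\Vert\cdot\Vert_{L_T^1}\le T^{3/4}\Vert\cdot\Vert_{L_T^4}$, which bounds $2^{qs/3}\Vert\Delta_q\textnormal{div}\, v_\varepsilon\Vert_{L_T^1 L^\infty}$ by $2^{q(s/3+1)}T^{3/4}\varepsilon^{1/4}C_0^\varepsilon(1+T)e^{CV_\varepsilon(T)}$, hence at $q=N$ by $\varepsilon^{1/4-\delta(s/3+1)}(1+T^{7/4})C_0^\varepsilon e^{CV_\varepsilon(T)}$. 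For the high frequencies $q>N$ I would instead invoke the energy estimate directly: Bernstein gives $\Vert\Delta_q\textnormal{div}\, v_\varepsilon\Vert_{L^\infty}\lesssim 2^{-qs}\Vert v_\varepsilon\Vert_{H^{2+s}}$, so after the $2^{qs/3}$ weight the block decays like $2^{-2qs/3}$ and the tail is bounded by $\varepsilon^{2\delta s/3}\,T\,C_0^\varepsilon e^{CV_\varepsilon(T)}$. Choosing $\delta<\tfrac{3}{4(s+3)}$ makes both exponents positive, and setting $\eta=\min\{2\delta s/3,\ 1/4-\delta(s/3+1)\}$ gives the claimed bound with time factor $(1+T^{7/4})$.

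The routine part is the Besov product estimate for $f_\varepsilon,g_\varepsilon$; the genuinely delicate point is the frequency balancing in the second estimate, namely reconciling the $\varepsilon$-gain of Strichartz (which costs positive powers of $2^q$) against the regularity gain of the energy bound (which costs no $\varepsilon$ but decays in $q$), and checking that the resulting window for $\delta$ is nonempty and depends only on $s$. One should also keep track of the endpoint $p=\infty$ in Lemma \ref{lem2} and verify that the $L^\infty$-boundedness of $\Delta_q$ suffices, so that no extra derivative loss creeps in.
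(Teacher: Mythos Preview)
Your proposal is correct and follows essentially the same route as the paper: Strichartz at $p=\infty$ for \eqref{estq}, then a low/high frequency split balancing the $\varepsilon^{1/4}$ gain against the $H^{2+s}$ energy bound for the second estimate. The only cosmetic differences are that the paper routes the product estimate for $(f_\varepsilon,g_\varepsilon)$ through $H^1$ (so only the $H^2$ energy bound is needed there) and fixes $N$ by equating the two pieces, obtaining the explicit value $\eta=\dfrac{s}{4(2s+3)}$ rather than leaving a free parameter $\delta$.
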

\begin{proof}
Applying Lemma \ref{lem2} to the equation (\ref{st1}), we get
\begin{eqnarray*}
\Vert \Gamma_\varepsilon\Vert_{L_T^4L^{\infty}}\lesssim \varepsilon^{\frac{1}{4}}\Big(\Vert \Gamma^0_\varepsilon\Vert_{\dot{B}^{\frac{3}{4}}_{2,1}}+\int_{0}^{T}\Vert (\Q f_\varepsilon -i\nabla \vert \hbox{D}\vert^{-1}g_\varepsilon)(\tau)\Vert_{\dot{B}^{\frac{3}{4}}_{2,1}}d\tau\Big).
\end{eqnarray*}
Since $\Q$ and $\nabla\vert \hbox{D}\vert^{-1}$ act continuously on the homogeneous Besov spaces, we get
\begin{eqnarray}\label{ga}
\Vert \Gamma_\varepsilon\Vert_{L_T^4L^{\infty}} &\lesssim &\varepsilon^{\frac{1}{4}}\Big(\Vert (v_{0,\varepsilon},c_{0,\varepsilon})\Vert_{\dot{B}^{\frac{3}{4}}_{2,1}}+\int_0^T\Vert ( f_\varepsilon , g_\varepsilon)(\tau)\Vert_{ \dot{B}^{\frac{3}{4}}_{2,1}}d\tau\Big)\notag\\ &\lesssim &\varepsilon^{\frac{1}{4}}\Big(\Vert (v_{0,\varepsilon},c_{0,\varepsilon})\Vert_{H^{1}}+T\Vert ( f_\varepsilon , g_\varepsilon)\Vert_{L^{\infty}_T H^{1}}\Big).
\end{eqnarray}
Where we have used in the last inequality the fact that $H^{1}\hookrightarrow B^{\frac{3}{4}}_{2,1}\hookrightarrow \dot{B}^{\frac{3}{4}}_{2,1}$.\\ 
To estimate $\Vert ( f_\varepsilon , g_\varepsilon)\Vert_{H^{1}}$ we use the following law product
$$\Vert u\cdot\nabla w\Vert_{ H^{1}}\lesssim\Vert u\Vert_{L^\infty}\Vert w\Vert_{ H^{2}}+\Vert w\Vert_{L^\infty}\Vert u\Vert_{ H^{2}}.$$
Then, by definition of $(f_\varepsilon,g_\varepsilon)$ we have
$$
\Vert ( f_\varepsilon , g_\varepsilon)\Vert_{ H^{1}}  \lesssim  \Vert ( v_\varepsilon , c_\varepsilon)\Vert _{ L^{\infty}}\Vert ( v_\varepsilon , c_\varepsilon)\Vert _{ H^{2}}.
$$
Using the embedding $ H^{2}\hookrightarrow L^\infty$ combined with the energy estimates, we get
\begin{eqnarray*}
\Vert ( f_\varepsilon , g_\varepsilon)\Vert_{L^{\infty}_T H^{1}}  &\lesssim &\Vert ( v_\varepsilon , c_\varepsilon)\Vert ^2_{L^{\infty}_T H^{2}}\\ &\lesssim & \Vert ( v_{0,\varepsilon} , c_{0,\varepsilon})\Vert^2_{ H^{2}}e^{CV_\varepsilon(T)}.
\end{eqnarray*}
Inserting this into the estimate \eqref{ga} we find
\begin{eqnarray*}
\Vert \Gamma_\varepsilon\Vert_{L_T^4L^{\infty}} &\lesssim & \varepsilon^{\frac{1}{4}}\Big(\Vert (v_{0,\varepsilon},c_{0,\varepsilon})\Vert_{H^{2}}+T\Vert ( v_{0,\varepsilon} , c_{0,\varepsilon})\Vert ^2_{ H^{2}}e^{CV_\varepsilon(T)}\Big)\\ &\lesssim & C_0^\varepsilon\varepsilon^{\frac{1}{4}}(1+T)e^{CV_\varepsilon(T)}.
\end{eqnarray*}
As the compressible part of $v_\varepsilon$ is the imaginary part of $\Gamma_\varepsilon$, then
\begin{eqnarray*}
\Vert \Q v_\varepsilon\Vert_{L_T^4L^{\infty}}  &\lesssim & C_0^\varepsilon\varepsilon^{\frac{1}{4}}(1+T)e^{CV_\varepsilon(T)}.
\end{eqnarray*}
By the same manner, we use (\ref{st2}) to prove
\begin{eqnarray*}
\Vert c_\varepsilon\Vert_{L_T^4L^{\infty}}  &\lesssim & C_0^\varepsilon \varepsilon^{\frac{1}{4}}(1+T)e^{CV_\varepsilon(T)}.
\end{eqnarray*}

To prove  the second estimate we use an interpolation procedure between the Strichartz estimates for lower frequencies and the energy estimates for higher frequencies. More precisely, consider $N$ an integer that will be judiciously fixed later. Then, using the embedding $B^{s/3}_{\infty,1}\hookrightarrow B^{s/3}_{\infty,\infty}$, Bernstein inequality and the continuity of the operator $\mathbb{Q}$ on the Lebesgue space $L^2$, we find
\begin{eqnarray*}
\Vert\textnormal{div}\, v_\varepsilon\Vert_{B^{s/3}_{\infty,\infty}}&=& \Vert\textnormal{div}\,\mathbb{Q} v_\varepsilon\Vert_{B^{s/3}_{\infty,\infty}}\\ &\leq &\sum_{q<N}2^{q\frac{s}{3}} \Vert\Delta_q\textnormal{div}\, \mathbb{Q} v_\varepsilon\Vert_{L^\infty}+\sum_{q\geqslant N}2^{q\frac{s}{3}} \Vert\Delta_q\textnormal{div}\,\mathbb{Q} v_\varepsilon\Vert_{L^\infty}\\ &\lesssim &\sum_{q<N}2^{q(\frac{s}{3}+1)}\Vert\Delta_q\mathbb{Q} v_\varepsilon\Vert_{L^\infty}+2^{- N\frac{s}{3}}\sum_{q\geqslant N}2^{q(\frac{2s}{3}+2)}\Vert\Delta_q \mathbb{Q} v_\varepsilon\Vert_{L^2}\\ &\lesssim & 2^{N(\frac{s}{3}+1)}\Vert \mathbb{Q}v_\varepsilon\Vert_{L^\infty}+2^{- N\frac{s}{3}}\Vert v_\varepsilon\Vert_{B_{2,1}^{\frac{2s}{3}+2}}\\ &\lesssim & 2^{N(\frac{s}{3}+1)}\Vert \mathbb{Q}v_\varepsilon\Vert_{L^\infty}+2^{- N\frac{s}{3}}\Vert v_\varepsilon\Vert_{H^{2+s}}.
\end{eqnarray*}
Where we have used in last inequality the embedding  $H^{2+s}\hookrightarrow B_{2,1}^{\frac{2s}{3}+2}$. Integrating in time and combining  (\ref{estq}) with  the energy estimates  we get
\begin{eqnarray*}
\Vert\textnormal{div}\,v_\varepsilon\Vert_{L^1_T B^{s/3}_{\infty,\infty}}& \lesssim &  C_0^\varepsilon 2^{N(\frac{s}{3}+1)} \varepsilon^{\frac{1}{4}} T^{\frac{3}{4}}(1+T)e^{CV_\varepsilon(t)}+ C_0^\varepsilon 2^{-\frac{s}{3} N}Te^{CV_\varepsilon(T)}\\ & \lesssim &  C_0^\varepsilon (1+T^{\frac{7}{4}})\big( 2^{N(\frac{s}{3}+1)} \varepsilon^{\frac{1}{4}}+ 2^{-\frac{s}{3} N}\big)e^{CV_\varepsilon(T)}.
\end{eqnarray*}
By similar computations, we get
\begin{eqnarray*}
\Vert\nabla c_\varepsilon\Vert_{L^1_T B^{s/3}_{\infty,\infty}} \lesssim C_0^\varepsilon (1+T^{\frac{7}{4}})\big( 2^{N(\frac{s}{3}+1)} \varepsilon^{\frac{1}{4}}+ 2^{-\frac{s}{3} N}\big)e^{CV_\varepsilon(T)}.
\end{eqnarray*}
We choose $N$ such that $2^{N(\frac{2s}{3}+1)}\approx \varepsilon^{\frac{-1}{4}}$. This is equivalent to take
$$
N\approx \frac{1}{4(\frac{2s}{3}+1)}\log_2 \varepsilon^{-1}.
$$
Consequently 
\begin{eqnarray*}
\Vert(\textnormal{div }v_\varepsilon,\nabla c_\varepsilon)\Vert_{L^1_t B^{s/3}_{\infty,\infty}}& \lesssim &  C_0^\varepsilon (1+t^{\frac{7}{4}}) \varepsilon^{\frac{s}{4(2s+3)}}e^{CV_\varepsilon(t)}.
\end{eqnarray*}
This ends the proof of the corollary.\\
\end{proof}

We are now in position to prove our main theorem.
 \section{Main result}
In this section, we shall state more general result than Theorem 1, afterwards, the rest of this section will be devoted
to the discussion of the proof. Our result reads as follows.
 \begin{theorem}\label{th4} 
Let $s,\alpha\in]0,1[$, $ p\in]1,2[$ and $F\in\mathcal{F}$. Consider a family  of initial data $\{(v_{0,\varepsilon},c_{0,\varepsilon})_{0<\varepsilon<1}\}$ such that there exists a   positive constant $C$ which does not depend on  $\varepsilon$ and verifying
$$
  \Vert (v_{0,\varepsilon},c_{0,\varepsilon})\Vert_{H^{2+s}} \leq C (\log\varepsilon^{-1})^{\alpha},
  $$
 $$
 \Vert \omega_{0,\varepsilon}\Vert_{L^p\cap \BMOF}\leq C.
 $$
 Then, the system \eqref{EC} admits a unique solution $(v_\varepsilon,c_\varepsilon)\in C\big([0,T_\varepsilon],H^{2+s}\big)$ with the alternative:
  \begin{enumerate}
\item [(1)]   If $F$ belongs to the class $\mathcal{F}'$ then the lifespan $T_\varepsilon$ of the solution  satisfies the lower bound: 
$$
  T \geqslant \frac{1}{C_0}\M\Big( (1-\alpha)\ln\ln\varepsilon^{-1}\Big) \triangleq \tilde{T}_\varepsilon.
  $$ 
and the vorticity $\omega_\varepsilon$  satisfies 
\begin{eqnarray} \label{glob}
\forall t\in [0, \tilde{T}_\varepsilon],\quad  \Vert \omega_\varepsilon(t)\Vert_{\BMOF\cap L^p} & \leq &C_0(\M^{-1})'\big(C_0(1+t)\big).
\end{eqnarray}
Where $\M: [0,+\infty[\longrightarrow[0,+\infty[$ is defined by
$$
\M(x)\triangleq \int_{0}^{x}\frac{\dy}{F(e^{Cy})}
$$
and $(\M^{-1})'$ denotes the derivative of $\M^{-1}.$
\item [(2)]  If $F$ belongs to the class $\mathcal{F}\backslash\mathcal{F}'$ then there exists $T_0>0 $ independent of $\varepsilon$ such that for all $t\leq T_0$ we have
\begin{eqnarray} \label{loc}
\Vert \omega_\varepsilon(t)\Vert_{\BMOF\cap L^p} & \leq & C_0.
\end{eqnarray} \end{enumerate}
  Moreover, in both cases the compressible and acoustic parts of the solutions tend to zero: there exists $\eta> 0$ such that  for small $\varepsilon$ and for all $0 \leq T \leq \tilde{T}_\varepsilon$ ( respectively $0 \leq T \leq T_0$)
\begin{equation}\label{eqac}
\Vert(\textnormal{div}\, v_\varepsilon, \nabla c_\varepsilon)\Vert_{L^1_TB^{s/3}_{\infty,\infty}}\lesssim \varepsilon^{\eta/2}.
\end{equation}
Assume in addition that $\lim_{\varepsilon\rightarrow 0}\Vert\omega_{0,\varepsilon} -\omega_0\Vert_{L^p}=0$, for some vorticity $\omega_0\in BMO_F\cap L^p$ associated to a divergence-free vector field $v_0$. Then, the vortices  $(\omega_{\varepsilon})_\varepsilon$ converge strongly to the   solution $\omega$ \mbox{of \eqref{vo}} associated to the  initial data $\omega_0$. More precisely, for all  $t\in\RR_+$  $(0 \leq t \leq T_0$ respectively$)$
$$
\lim_{\varepsilon\rightarrow 0}\Vert\big(\omega_{\varepsilon} -\omega\big)(t)\Vert_{L^q}=0\quad \forall q\in[p,+\infty[.
$$
Furthermore, 
\begin{enumerate}
\item if $F\in\mathcal{F}'$  then for all $t\in \RR_+$
 \begin{eqnarray} \label{glob1}
\Vert \omega(t)\Vert_{\BMOF\cap L^p} & \leq &C_0(\M^{-1})'\big(C_0(1+t)\big) .
\end{eqnarray}
\item if $F\in\mathcal{F}\backslash\mathcal{F}'$ then for all $t< T_0$
 \begin{equation}\label{loc1}
 \Vert \omega(t)\Vert_{L^p\cap \BMOF}\leq C_0.
 \end{equation}
\end{enumerate}
 \end{theorem}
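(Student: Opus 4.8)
The plan is to run a bootstrap argument coupling three a priori estimates---the $H^{2+s}$ energy estimate, the Strichartz decay of the acoustic/compressible parts, and the $\BMOF\cap L^p$ transport estimate of Theorem \ref{th2}---and to close it through an Osgood-type integral inequality. First I would invoke the classical theory of symmetric hyperbolic systems (the symmetrization via the sound speed recalled in the introduction) to obtain, for each fixed $\varepsilon$, a unique maximal solution $(v_\varepsilon,c_\varepsilon)\in C([0,T_\varepsilon[;H^{2+s})$, together with the continuation criterion that $T_\varepsilon<\infty$ forces $\int_0^{T_\varepsilon}(\Vert\nabla v_\varepsilon\Vert_{L^\infty}+\Vert\nabla c_\varepsilon\Vert_{L^\infty})\,d\tau=+\infty$. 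Since the vorticity solves the compressible transport equation \eqref{12}, Theorem \ref{th2} applies directly and reduces the matter to estimating two quantities: the log-Lipschitz integral $V(t)=\int_0^t\Vert v_\varepsilon(\tau)\Vert_{LL}\,d\tau$ and $\Vert\textnormal{div}\, v_\varepsilon\Vert_{L^1_t(\LMOF\cap L^\infty)}$. The latter is controlled by the Strichartz Corollary \ref{st} together with the embedding $C^{s/3}=B^{s/3}_{\infty,\infty}\hookrightarrow\LMOF$ from Proposition \ref{alg}(iii), while Lemma \ref{22} bounds $\Vert v_\varepsilon\Vert_{LL}$ by $\Vert\omega_\varepsilon\Vert_{L^p\cap B^0_{\infty,\infty}}\lesssim\Vert\omega_\varepsilon\Vert_{\BMOF\cap L^p}$ plus the Strichartz-small terms $\Vert\mathbb{Q}v_\varepsilon\Vert_{L^\infty}$ and $\Vert\textnormal{div}\, v_\varepsilon\Vert_{B^0_{\infty,\infty}}$.

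Once the compressible contributions are absorbed as negligible, Theorem \ref{th2} collapses to a closed inequality for $X(t)\triangleq\Vert\omega_\varepsilon(t)\Vert_{\BMOF\cap L^p}$ of the form $X(t)\leq C_0\,F\big(e^{C\int_0^tX(\tau)\,d\tau}\big)$. Writing $Z(t)=\int_0^tX$, this reads $Z'(t)\leq C_0\,F(e^{CZ(t)})$, to which I would apply the Osgood Lemma \ref{osgood} with modulus $\mu(x)=F(e^{Cx})$; its primitive is exactly $\M(y)=\int_0^y dy'/F(e^{Cy'})$, and the substitution $x=e^{Cy'}$ identifies the Osgood divergence $\int^\infty dx/(xF(x))=+\infty$ with $\lim_{y\to\infty}\M(y)=+\infty$. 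This is precisely the dichotomy $\mathcal{F}'$ versus $\mathcal{F}\setminus\mathcal{F}'$: in the first case $\M^{-1}$ is globally defined and $Z(t)\leq\M^{-1}(\M(Z(0))+C_0t)$ for all $t$, whence $X(t)=Z'(t)\leq C_0(\M^{-1})'(C_0(1+t))$, giving \eqref{glob} and, after passing to the limit, \eqref{glob1}; in the second case $\M(\infty)<\infty$, so $\M^{-1}$ saturates at a finite $T_0$ independent of $\varepsilon$ and one obtains only the local bounds \eqref{loc} and \eqref{loc1}.

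\emph{The main obstacle} is closing the bootstrap for the \emph{Lipschitz} quantity $\mathcal V_\varepsilon(t)\triangleq\Vert\nabla v_\varepsilon\Vert_{L^1_tL^\infty}+\Vert\nabla c_\varepsilon\Vert_{L^1_tL^\infty}$, which drives both Proposition \ref{enr} and Corollary \ref{st} but which the $\BMOF$ machinery does not control---it only governs the weaker log-Lipschitz norm. To bridge this gap I would insert a logarithmic interpolation inequality $\Vert\nabla\mathbb{P}v_\varepsilon\Vert_{L^\infty}\lesssim\Vert\omega_\varepsilon\Vert_{L^p\cap BMO}\log\big(e+\Vert v_\varepsilon\Vert_{H^{2+s}}\big)$, combine it with the energy bound $\Vert v_\varepsilon\Vert_{H^{2+s}}\leq C(\log\varepsilon^{-1})^\alpha e^{C\mathcal V_\varepsilon}$ so that the logarithm is $\lesssim\alpha\ln\ln\varepsilon^{-1}+\mathcal V_\varepsilon$, and run a continuity argument on the set where $e^{C\mathcal V_\varepsilon}\lesssim(\log\varepsilon^{-1})^{1-\alpha}$ (this is exactly what keeps $\Vert v_\varepsilon\Vert_{H^{2+s}}\lesssim\log\varepsilon^{-1}$ and hence the acoustic error \eqref{eqac} of size $\varepsilon^{\eta/2}$). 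A Gronwall step then shows $\mathcal V_\varepsilon$ stays below the threshold as long as $Z(t)=\int_0^tX\lesssim\ln\ln\varepsilon^{-1}$, i.e. as long as $C_0t\leq\M\big((1-\alpha)\ln\ln\varepsilon^{-1}\big)$; this is where the factor $(1-\alpha)$ and the expression for $\tilde T_\varepsilon$ originate, and where tracking the interplay of the $\varepsilon$-powers, the $\ln\ln$ factors and the Osgood growth is genuinely delicate.

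For the incompressible limit I would use that, by \eqref{eqac}, both $\textnormal{div}\, v_\varepsilon$ and $\mathbb{Q}v_\varepsilon$ tend to zero, so $\omega_\varepsilon$ almost solves the incompressible transport equation driven by $\mathbb{P}v_\varepsilon$, recovered from $\omega_\varepsilon$ through Biot-Savart \eqref{b-s}. The uniform bound on $(\omega_\varepsilon)_\varepsilon$ in $\BMOF\cap L^p$ together with the $L^p$ convergence of the data yields, via weak compactness (Proposition \ref{proptop}(iv)) and the equation for time-equicontinuity, a limit $\omega$ solving \eqref{vo}, unique by the incompressible $\BMOF$ theory; strong convergence is then obtained by writing the equation for $\omega_\varepsilon-\omega$, whose source terms (proportional to $\textnormal{div}\, v_\varepsilon$, $\mathbb{Q}v_\varepsilon$ and $(\mathbb{P}v_\varepsilon-v)\cdot\nabla\omega$) vanish, and by estimating $\Vert\omega_\varepsilon-\omega\Vert_{L^p}$ through an Osgood inequality built on the log-Lipschitz modulus of the velocities, forcing $\Vert\omega_\varepsilon-\omega\Vert_{L^p}\to0$. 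The interpolation inequality \eqref{inter}, applied to the difference and using the uniform $\BMO$ bound, finally upgrades this to $L^q$ for every $q\in[p,+\infty[$, and the bounds on $\omega$ follow by lower semicontinuity of the $\BMOF\cap L^p$ norm under the weak convergence.
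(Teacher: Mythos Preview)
Your treatment of the lifespan part matches the paper's proof closely: the same chain Theorem~\ref{th2} $\to$ Proposition~\ref{alg}(iii) $\to$ Corollary~\ref{st} $\to$ Lemma~\ref{22} $\to$ Osgood Lemma~\ref{osgood} is used, and the paper closes on $\mathcal V_\varepsilon$ via the same logarithmic interpolation you describe (it is recorded as the second Appendix lemma, in the form $\Vert\nabla v_\varepsilon\Vert_{L^\infty}\lesssim N\Vert\omega_\varepsilon\Vert_{BMO\cap L^p}+2^{-Ns}\Vert(v_{0,\varepsilon},c_{0,\varepsilon})\Vert_{H^{s+2}}e^{C\mathcal V_\varepsilon}$, with $N$ tuned to $\mathcal V_\varepsilon$), leading to exactly the bootstrap threshold and the expression for $\tilde T_\varepsilon$ you anticipate.

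For the incompressible limit, however, the paper takes a \emph{different and more robust route}: it works at the level of the flows rather than the vorticity difference. Using Lemma~\ref{p1} together with the uniform $LL$ bound on $v_\varepsilon$, the family $\{\psi_\varepsilon^{\pm1}\}$ is shown to be equicontinuous and locally bounded; Arzel\`a--Ascoli then yields a limiting flow $\psi$, which is measure-preserving because $\textnormal{div}\,v_\varepsilon\to0$. The candidate $\omega(t)=\omega_0\circ\psi^{-1}(t)$ is then compared with the explicit formula $\omega_\varepsilon(t)=\omega_{0,\varepsilon}\circ\psi_\varepsilon^{-1}(t)\,e^{-\int_0^t(\textnormal{div}\,v_\varepsilon)\circ\psi_\varepsilon}$, and the strong $L^q$ convergence follows by splitting into $\Vert\omega_0\circ\psi^{-1}-\omega_0\circ\psi_\varepsilon^{-1}\Vert_{L^q}$ (handled by approximation of $\omega_0$ by $C_c$ functions and dominated convergence) and a remainder controlled by $\Vert\omega_{0,\varepsilon}-\omega_0\Vert_{L^p}$ and $\Vert\textnormal{div}\,v_\varepsilon\Vert_{L^1_tL^\infty}$. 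Your proposed PDE-stability argument for $\omega_\varepsilon-\omega$ runs into a genuine difficulty: the source term $(\mathbb{P}v_\varepsilon-v)\cdot\nabla\omega$ you list cannot be estimated in $L^p$, since $\omega\in\BMOF\cap L^p$ carries no differentiability and the velocities are only log-Lipschitz. A Yudovich-type Osgood stability works at the \emph{velocity} level in $L^2$ (where one integrates by parts and exploits $\Vert\nabla v\Vert_{L^q}\lesssim q$), not directly on $\Vert\omega_\varepsilon-\omega\Vert_{L^p}$; as written, that step is a gap. The paper's flow-compactness approach sidesteps this entirely by never differentiating the limiting vorticity.
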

 \begin{remark}
 Theorem $\ref{th4}$ recovers the local and global well-posedness theory of the incompressible Euler system  according to the inequalities $(\ref{glob1})$ and $(\ref{loc1}).$
 \end{remark}
The proof of Theorem $\ref{th4}$ will be divided into two parts: in the first one we estimate  the lifespan of the solutions, thereafter, we discuss in the second part the incompressible limit problem.
 \subsection{Lower bound of the lifespan} We will give an a priori bound of $T_\varepsilon$ and show that the acoustic parts vanish when the Mach number goes to zero.
We denote by
$$
W_\varepsilon(t)\triangleq \Vert v_\varepsilon\Vert_{L^1_tLL}\quad\textnormal{and}\quad V_\varepsilon(t)\triangleq \Vert\nabla v_\varepsilon\Vert_{L^1_tL^\infty}+ \Vert\nabla c_\varepsilon\Vert_{L^1_tL^\infty}.
$$
In view of  Theorem \ref{th2} and using the embedding (iii) from Proposition \ref{alg}, we have
\begin{eqnarray*}
\Vert \omega_\varepsilon(t)\Vert_{\BMOF\cap L^p}\leq C_0F\Big(e^{CW_\varepsilon(t)}\Big)\bigg(1+F\Big(e^{CW_\varepsilon(t)}\Big)  \Vert\textnormal{div}\, v_\varepsilon\Vert_{L^1_tB^{\frac{s}{3}}_{\infty,\infty}}\bigg)e^{C\Vert \textnormal{div}\, v_\varepsilon\Vert_{L^1_tL^{\infty}}}.
\end{eqnarray*}
According to Remark \ref{rq1}, the function $F$ has at most a polynomial growth : $F(x)\lesssim 1+x^\beta$. Also, since $\Vert v_\varepsilon\Vert_{LL}\lesssim\Vert\nabla v_\varepsilon\Vert_{L^\infty}$ we may write 
\begin{eqnarray*}
\Vert \omega_\varepsilon(t)\Vert_{\BMOF\cap L^p} &\leq & C_0F\Big(e^{CW_\varepsilon(t)}\Big)\bigg(1+e^{CV_\varepsilon(t)} \Vert\textnormal{div}\, v_\varepsilon\Vert_{L^1_tB^{\frac{s}{3}}_{\infty,\infty}}\bigg)e^{C\Vert \textnormal{div}\, v_\varepsilon\Vert_{L^1_tL^{\infty}}}.
\end{eqnarray*}
 It follows from Corollary \ref{st} that
\begin{eqnarray} \label{omega1}
\Vert \omega_\varepsilon(t)\Vert_{\BMOF\cap L^p} & \leq &C_0F\Big(e^{C\Vert v_\varepsilon\Vert_{L^1_tLL}}\Big)\Big(1+C_0^\varepsilon (1+t^{\frac{7}{4}}) \varepsilon^{\eta}e^{CV_\varepsilon(t)}\Big)e^{C_0^\varepsilon (1+t^{\frac{7}{4}}) \varepsilon^{\eta}e^{CV_\varepsilon(t)}}\notag\\ &\leq&C_0F\Big(e^{CW_\varepsilon(t)}\Big)e^{C_0^\varepsilon (1+t^{\frac{7}{4}}) \varepsilon^{\eta}e^{CV_\varepsilon(t)}},
\end{eqnarray}
we recall that $C_0^\varepsilon$ is a positive constant depending polynomially  on  $\Vert (v_{0,\varepsilon},c_{0,\varepsilon})\Vert_{H^{2+s}}$.\\
Now  Lemma \ref{22} and the embeddings $B^s_{\infty,\infty}\hookrightarrow BMO\hookrightarrow B^0_{\infty,\infty}$ ensure that
\begin{eqnarray}\label{vll}
\Vert v_\varepsilon\Vert_{LL}  &\lesssim & \Vert \omega_\varepsilon\Vert_{BMO_F\cap L^p}+\Vert \mathbb{Q} v_\varepsilon\Vert_{L^{\infty}}+\Vert\textnormal{div}\, v_\varepsilon\Vert_{B^s_{\infty,\infty}}.
\end{eqnarray}
Integrating in time and using Corollary \ref{st} we get
\begin{eqnarray}\label{wep}
W_\varepsilon(t) &\leq& C\int_{0}^{t}\Vert \omega_\varepsilon (\tau)\Vert_{BMO_F\cap L^p}d\tau+C_0^\varepsilon (1+t^{\frac{7}{4}}) \varepsilon^{\eta}e^{CV_\varepsilon(t)}.
\end{eqnarray}
Setting 
$$
\rho_\varepsilon(t)\triangleq C_0^\varepsilon (1+t^{\frac{7}{4}}) \varepsilon^{\eta}e^{CV_\varepsilon(t)}
$$
and inserting the estimate (\ref{omega1}) into \eqref{wep} give
\begin{eqnarray}\label{f}
W_\varepsilon(t) \leq  C_0\int_{0}^{t}F\big(e^{CW_\varepsilon(\tau)}\big)e^{\rho_\varepsilon(\tau)}d\tau+\rho_\varepsilon(t).
\end{eqnarray}
At this stage we  distinguish two cases depending whether $F\in\mathcal{F'}$ or not.\\

\textbf{(1)} If $F\in\mathcal{F'}$, we fix $T > 0$ an arbitrary real number. So the inequality \eqref{f} becomes
\begin{eqnarray*}
\forall t\in[0,T]\quad W_\varepsilon(t) &\leq & C_0\int_{0}^{t}F\big(e^{CW_\varepsilon(\tau)}\big)e^{\rho_\varepsilon(\tau)}d\tau+\rho_\varepsilon(T).
\end{eqnarray*}
We  introduce the function $\M : [0,+\infty[\longrightarrow[0,+\infty[$ defined by
$$
\M(x)\triangleq \int_{0}^{x}\frac{\dy}{F(e^{Cy})}.
$$
Since $\M$ is a nondecreasing function and $\displaystyle{\lim_{x\rightarrow \infty}\M(x)=+\infty}$ then $\M$ is one-to one and  Lemma \ref{osgood} implies that
\begin{eqnarray*}
\forall t\in[0,T]\quad W_\varepsilon(t) &\leq &\M^{-1}\Big(\M\big(\rho_\varepsilon(T)\big)+ C_0 e^{\rho_\varepsilon(t)}t\Big).
\end{eqnarray*}
Then,
\begin{eqnarray*}
W_\varepsilon(T) &\leq &\M^{-1}\Big(\M\big(\rho_\varepsilon(T)\big)+ C_0e^{\rho_\varepsilon(T)}T\Big).
\end{eqnarray*}
Inserting this estimate into (\ref{omega1}) leads to
\begin{eqnarray} \label{ome}
\Vert \omega_\varepsilon(T)\Vert_{\BMOF\cap L^p} & \leq &C_0F\Big(e^{C\M^{-1}\big(\M\big(\rho_\varepsilon(T)\big)+ C_0e^{\rho_\varepsilon(T)}T\big)}\Big)e^{\rho_\varepsilon(T)}.
\end{eqnarray}
 Now we need the following estimate whose proof is given in the Appendix:
$$
 \Vert \nabla v_\varepsilon(T)\Vert_{L^\infty}\lesssim \Vert (v_{0,\varepsilon},c_{0\varepsilon})\Vert_{H^{s+2}}+\Vert \omega_\varepsilon(T)\Vert_{BMO\cap L^p}V_\varepsilon(T).
 $$

This, combined  with Corollary \ref{st} yield
$$ 
V_\varepsilon(T)\le  \rho_\varepsilon(T)+C_0^\varepsilon T+C\int_{0}^{T}\Vert \omega_\varepsilon(t)\Vert_{BMO\cap L^p}V_\varepsilon(t)dt.
$$
Hence Gronwall's inequality implies that
\begin{eqnarray}\label{v}
 V_\varepsilon(T) \le \big( C_0^\varepsilon T+\rho_\varepsilon(T)\big)\exp\Big(C\int_{0}^{T}\Vert \omega_\varepsilon(t)\Vert_{BMO\cap L^p}dt\Big).
 \end{eqnarray}
Putting   (\ref{ome}) in the last estimate we get                                
 \begin{eqnarray*}
  V_\varepsilon(T)  & \le & \big(C_0^\varepsilon T+\rho_\varepsilon(T))\exp\Big(C_0e^{\rho_\varepsilon(T)}\int_0^TF\big(e^{C\M^{-1}(\M(\rho_\varepsilon(t))+ C_0e^{\rho_\varepsilon(t)}t)}\big)dt\Big).
 \end{eqnarray*}
Assuming $\rho_\varepsilon(T)\leq 1$, which is true at least for small $T$, and using the fact that  $\M^{-1}$ is non decreasing we find 
$$
  V_\varepsilon(T) \lesssim C_0^\varepsilon(1+T) \exp\Big(C_0\int_0^TF\big(e^{C\M^{-1}(C_0(1+t))}\big)dt\Big).
$$
Moreover the inequality \eqref{ome} becomes
\begin{eqnarray*} 
\Vert \omega_\varepsilon(T)\Vert_{\BMOF\cap L^p} & \leq & C_0F\Big(e^{C\M^{-1}\big(C_0(1+T)\big)}\Big).
\end{eqnarray*}
A straightforward computation gives
$$
F\big(e^{C\M^{-1}(x)}\big)=(\M^{-1})'(x).
$$
Then we immediately deduce that
\begin{eqnarray*}
\Vert \omega_\varepsilon(T)\Vert_{\BMOF\cap L^p} & \leq &C_0(\M^{-1})'\big(C_0(1+T)\big),
\end{eqnarray*}
and
$$
  V_\varepsilon(T) \lesssim C_0^\varepsilon(1+T) \exp\Big(\M^{-1}\big(C_0(1+T)\big)\Big)
$$
 So from the condition  $\rho_\varepsilon(T)\leq 1$  we can conclude a lower bound of the lifespans of the  solution. Indeed, we have
\begin{eqnarray*}
\rho_\varepsilon(T)&=&C_0^\varepsilon\varepsilon^{\eta} (1+T^\frac{7}{4})e^{CV_\varepsilon(T)} \\ & \leq & \varepsilon^{\eta}\exp\Big(C_0^\varepsilon (1+T) e^{\M^{-1}\big(C_0(1+T)\big)}\Big).  
 \end{eqnarray*}
 Now let $\alpha(\varepsilon)$ be a function going to $\infty$ as $\varepsilon$ approaches zero and choosing $T$ such that
 $$
 C_0(1+T)=\M(\alpha(\varepsilon))
 $$ 
 Then in order to get $\rho_\varepsilon(T)\le \varepsilon^{\frac{\eta}{2}}$ we should impose the constraint
 $$
 \exp\Big(C_0^\varepsilon \M(\alpha(\varepsilon))e^{\alpha(\varepsilon)}\Big)\le \varepsilon^{-\frac{\eta}{2}}.
 $$
 Since $F(x)\ge 1,$ for $x\geq 1$ then from the definition of $\M$ we infer that
 $$
 \M(x)\le x
 $$
 and thus to get the preceding inequality  it suffices to assume
 $$
  \exp\Big(C_0^\varepsilon e^{\alpha(\varepsilon)}\Big)\le \varepsilon^{-\frac{\eta}{2}}.
 $$
 At this stage we see that one can impose  the following conditions,
 $$C_0^\varepsilon\leq C(\ln\varepsilon^{-1})^\alpha \quad\hbox{and}\quad \alpha(\varepsilon)\approx (1-\alpha) \ln\ln\frac1\varepsilon
 $$ 
 for some $\alpha\in(0,1).$
In particular, from Corollary \ref{st}, we conclude that
\begin{eqnarray}\label{eqac1}
\Vert(\textnormal{div}v_\varepsilon, \nabla c_\varepsilon)\Vert_{(L^1_T\cap L^4_T)B^{\frac{s}{3}}_{\infty,\infty}}+\Vert (\Q v_\varepsilon,c_\varepsilon)\Vert_{(L^1_T\cap L^4_T)L^\infty}\lesssim \varepsilon^{\eta/2}
  \end{eqnarray}
\textbf{(2)} Let  $F\in\mathcal{F}\backslash\mathcal{F}'$ then  we return   to the estimate (\ref{f}) and we assume that $\rho_\varepsilon(t)\leq 1$. Using again the fact that $F$ has at most a polynomial growth, we get
\begin{eqnarray*}
W_\varepsilon(t) \leq  C_0e^{CW_\varepsilon(t)}t+1.
\end{eqnarray*}
Consequently we can find $T_0\in (0,1)$ independent of $\varepsilon$  such that
\begin{eqnarray*}
\forall t\in[0,T_0]\quad W_\varepsilon(t) \leq  2.
\end{eqnarray*}
Plugging this estimate into (\ref{omega1}) leads to
\begin{eqnarray*}
\Vert \omega_\varepsilon(t)\Vert_{\BMOF\cap L^p} \leq C_0.
\end{eqnarray*}
This combined with (\ref{v}) and the constraint on $\rho_\varepsilon$ give 
\begin{eqnarray*}
 V_\varepsilon(T) &\leq& C_0^\varepsilon( T+1)e^{C_0T}\\
 &\lesssim& C_0^\varepsilon.
 \end{eqnarray*}
 Hence,
\begin{eqnarray*}
\rho_\varepsilon(T)&=&C_0^\varepsilon\varepsilon^{\eta} (1+T^\frac{7}{4})e^{CV_\varepsilon(T)} \\ & \lesssim & 
\varepsilon^{\eta} e^{2C_0^\varepsilon}. \end{eqnarray*}
 Choosing $C_0^\varepsilon\leq C(\ln\varepsilon^{-1})^\alpha$,   the last expression will be bounded by $\varepsilon^{\frac{\eta}{2}}$ and thus we find $\rho_\varepsilon(t)\leq 1$.

This concludes the  proof of the first part of Theorem \ref{th4}.\\

\subsection{Incompressible limit}
\begin{proof}
As it has already pointed out, the vorticity $\omega_\varepsilon$ has the following structure
\begin{equation}\label{som}
\omega_\varepsilon(t)=\omega_{0,\varepsilon}(\psi^{-1}_\varepsilon(t))e^{-\int_0^t \textnormal{div}v_{\varepsilon}(\tau,\psi(\tau,\psi_{\varepsilon}^{-1}(t)))d\tau}.
\end{equation}
So the question of the convergence of the vortices $\{\omega_\varepsilon\}$ can be examined through the convergence of the flow maps  $\{\psi_\varepsilon\}$.  In other words, we shall establish  the existence of the particle trajectories $\psi$ as a uniform  limit of a subsequence of  $\{\psi_\varepsilon\}$.
Once this flow is constructed, we can propose a candidate for the solution of the incompressible Euler system given by
\begin{equation}\label{Eul4}
\omega(t,x)=\omega_0(\psi^{-1}(t,x))\quad\hbox{and}\quad v(t)=K\star\omega(t), \quad K(x)=\frac{1}{2\pi}\frac{x^\perp}{|x|^2}\cdot
\end{equation}
At this stage and in order to get a solution for \eqref{EI} we need to show that $\psi$ is the flow map associated to the velocity $v.$ For this goal we develop strong convergence results of the \mbox{velocities $\{v_\varepsilon\}$.}

To begin with, let $T$ and $R$ be two  positive real numbers such tha $T\leq \tilde{T}_\varepsilon$. Then,  for all $t\in[0,T]$ and $x\in \bar{B}(0,R)$ we use the integral equation of the flow $\psi_\varepsilon$ to get  \begin{eqnarray}\label{psi11}
\vert \psi^{-1}_{\varepsilon}(t,x)-x\vert&=& \bigg\vert  \int_0^t v_\varepsilon(\tau,\psi_{\varepsilon}(\tau,\psi^{-1}_{\varepsilon}(t,x)))d\tau\bigg\vert\notag\\ &\leq &\int_0^t \Vert v_\varepsilon(\tau)\Vert_{L^\infty} d\tau \notag\\  &\leq & \Vert \mathbb{P} v_{\varepsilon}\Vert_{L_T^1L^{\infty}} +\Vert\mathbb{Q}v_\varepsilon(\tau)\Vert_{L_T^1 L^\infty}.
\end{eqnarray}
Since the incompressible part $\mathbb{P}v_\varepsilon$ has the same vorticity as the total velocity
$$
\textnormal{curl }\mathbb{P}v_\varepsilon =\textnormal{curl }v_\varepsilon,
$$
and $1<p<2$, the Biot-Savart law implies that  
\begin{equation}\label{BS}
\Vert \mathbb{P}v_\varepsilon(\tau)\Vert_{L^\infty} \lesssim \Vert \omega_\varepsilon(\tau)\Vert_{ L^p\cap L^{2p}}.
\end{equation}
Hence, according to the identity (\ref{som}) we find
\begin{eqnarray*}
\Vert \mathbb{P}v_\varepsilon(\tau)\Vert_{L^\infty}& \lesssim  &\Vert \omega_{0,\varepsilon}(\psi_\varepsilon^{-1}(\tau))\Vert_{L^p\cap L^{2p}}e^{ \Vert\textnormal{div}\, v_{\varepsilon}\Vert_{L^1_TL^\infty}}.
\end{eqnarray*}
Using  a change of variable combined with  Lemma \ref{lem5} and the estimate \eqref{eqac1} we get
\begin{eqnarray}\label{p}
\Vert \mathbb{P}v_\varepsilon(\tau)\Vert_{L^\infty} &\lesssim &\Vert \omega_{0,\varepsilon}\Vert_{L^p\cap L^{2p}}e^{C \Vert\textnormal{div}\, v_{\varepsilon}\Vert_{L^1_TL^\infty}}\notag\\  \nonumber&\lesssim &\Vert \omega_{0,\varepsilon}\Vert_{L^p\cap BMO}e^{C\varepsilon^{\eta/2}}\\
&\le& C_0,
\end{eqnarray}
where we have used  the classical interpolation inequality (\ref{inter}).
Plugging the estimates (\ref{eqac1}) and (\ref{p})  into (\ref{psi11}) we find
\begin{eqnarray}\label{psi1}
\vert \psi^{-1}_{\varepsilon}(t,x)\vert   &\le & C_0T+R+1.
\end{eqnarray}
So the family $\{\psi_\varepsilon^{-1}\}$ is uniformly bounded on every compact $[0,T]\times \bar{B}(0,R)$ and it remains to study  its equicontinuity.
According to the   Lemma \ref{p1} we have
$$
\forall (x,y)\in\bar{B}(0,R)^2,\quad  \vert x-y\vert\leq e^{-\exp\big(\Vert v_\varepsilon\Vert_{L^1_TLL}\big)}\Rightarrow\vert \psi^{-1 }_{\epsilon}(t,x)-\psi^{- 1}_{\epsilon}(t,y)\vert\leq e\vert x-y\vert^{\exp(-\Vert v_\varepsilon\Vert_{L^1_TLL})}.
$$
But estimate \eqref{vll} combined with  \eqref{glob} and \eqref{eqac1} ensures the existence of an explicit time continuous function $\alpha(t)>0$ such that 
$$
\Vert v_\varepsilon\Vert_{L^1_TLL}\leq C_0\alpha(T).
$$
Hence, for all $(x,y)\in\bar{B}(0,R)^2$ with $\vert x-y\vert\leq e^{-\exp(C_0\alpha(T))}$ we have
\begin{equation}\label{phi}
\big\vert \psi^{- 1}_{\epsilon}(t,x)-\psi^{- 1}_{\epsilon}(t,y)\big\vert\leq e\vert x-y\vert^{\exp(-C_0\alpha(T))}.
\end{equation}
Consider the backward particle trajectories  that we denote by $(\phi_{\varepsilon}(s,t,x))_\varepsilon$ and which satisfies,
$$
\phi_\varepsilon(s,t,x)=x-\int_s^tv_\varepsilon(\tau,\phi_\varepsilon(\tau,t,x))d\tau.
$$
Then it is a well-known fact that
$$
\phi_\varepsilon(0,t,x)=\psi^{-1}(t,x)\quad\hbox{and}\quad \phi_{\varepsilon}(0,t_2,x)=\phi_{\varepsilon}\big(0,t_1,\phi_{\varepsilon}(t_1,t_2,x)\big)\quad \forall (t_1,t_2)\in[0,T]^2.
$$ 
Consequently we get  in view of \eqref{phi} 
 \begin{eqnarray*}
\big\vert \psi^{-1}_{\varepsilon}(t_1,x)-\psi^{-1}_{\varepsilon}(t_2,x)\big\vert &=& \big\vert \psi^{-1}_{\varepsilon}(t_1,x)-\psi^{-1}_{\varepsilon}(t_1,\phi_{\varepsilon}(t_1,t_2,x))\big\vert\\ & \leq & e \big\vert x-\phi_{\varepsilon}(t_1,t_2,x)\big\vert^{\exp(-C_0\alpha(T))}\\ &=& e\Big\vert \int_{t_1}^{t_2}v_\varepsilon(\tau,\phi_\varepsilon(\tau,t_2,x))d\tau\Big\vert^{\exp(-C_0\alpha(T))}\\&\leq & e\Big\vert \int_{t_1}^{t_2}\big(\Vert \mathbb{P}v_\varepsilon(\tau)\Vert_{L^\infty}+\Vert\mathbb{Q}v_\varepsilon(\tau)\Vert_{ L^\infty}\big)d\tau\Big\vert^{\exp(-C_0\alpha(T))}.
\end{eqnarray*}
despite that
\begin{equation}\label{Eqlog}
\big\vert x-\phi_{\varepsilon}(t_1,t_2,x)\big\vert\le e^{-\exp(C_0\alpha(T))}.
\end{equation}
It follows from (\ref{eqac1}) and (\ref{p}) that
\begin{eqnarray*}
\big\vert \psi^{-1}_{\varepsilon}(t_1,x)-\psi^{-1}_{\varepsilon}(t_2,x)\big\vert \leq  e\Big\vert C_0 \vert t_1-t_2\vert+C\varepsilon^{\eta/2}\vert t_1-t_2\vert^{3/4}\Big\vert^{\exp(-C_0\alpha(T))}  .
\end{eqnarray*}
By taking $|t_2-t_2|<<1$ we see that the condition \eqref{Eqlog} is satisfied and the preceding estimate is justified. Thus  with \eqref{phi} we find that 
 the family $\{\psi^{- 1}_{\epsilon}\}$ is equicontinuous on every compact $[0,T]\times\bar{B}(0,R)$. Consequently,  the Arzela-Ascoli theorem ensures the existence of a subsequence,  denoted also by  $\{\psi^{-1}_{\epsilon}\}$ and a particle trajectories $\psi^{- 1} $, such that $\{\psi^{- 1}_{\varepsilon}\}$ converges uniformly to $\psi^{- 1}$ on every compact $[0,T]\times \bar{B}(0,R)$. Observe that the subsequence may in principle depend on this compact but we can suppress this dependence by using  Cantor's diagonal argument. 
 
Performing the same analysis as previously to the integral equation of the flow $\phi_\varepsilon$ we can readily obtain that up to an extraction $\{\phi_\varepsilon\}$ converges uniformly in any compact to some continuous function $\phi$. Moreover for any $t, s \in [0, T],$  $\phi(t,s)$ is a homeomorphism with
$$
\phi^{-1}(t,s,x)=\phi(s,t,x),\quad \psi^{-1}(t,x)=\phi(0,t,x),\quad  \psi(t,x)=\phi(t,0,x).
$$ 
In addition,  for all $t \in [0, T],$  $\psi(t)$ is a Lebesgue measure preserving map. More precisely, for $q\in [1,\infty[$  and  $f \in L^q(\RR^2)$ we have
\begin{equation}\label{pm}
\Vert f\circ\psi_t^{\pm 1}\Vert_{L^q}=\Vert f(t)\Vert_{L^q}. 
\end{equation}
Indeed, for all continuous, compactly supported function $f$, $\{f\circ\psi_{t,\varepsilon}^{\pm 1}\}$ converges pointwisely to $f\circ\psi_t^{\pm 1}$. By the uniform boundedness of  $\{\psi_{t,\varepsilon}^{\pm 1}\}$ with  respect to $\varepsilon$, we get from the integral equation,
$$
\vert x\vert \leq \vert \psi_\varepsilon^{\pm 1}(t,x)\vert +C_0T\quad \forall t \in [0,T].
$$
Since $f$ is compactly supported, we conclude the existence of  $M>0$ such that
$$
\textnormal{supp} (f\circ\psi_{t,\varepsilon})\subset B(0,M+C_0 T).
$$
Therefore, by Lebesgue dominated convergence theorem, we get
\begin{equation}\label{lq}
\lim_{\varepsilon\rightarrow 0} \Vert f\circ \psi_{t,\varepsilon}^{\pm 1}-f\circ\psi_t^{\pm 1}\Vert_{L^q}=0.
\end{equation}
In particular,
$$
\lim_{\varepsilon\rightarrow 0} \Vert f\circ \psi_\varepsilon^{\pm 1}\Vert_{L^q}=\Vert f\circ\psi^{\pm 1}\Vert_{L^q}.
$$
On other hand,  a change of variable combined with Lemma \ref{lem5} lead to
\begin{eqnarray*}
\Vert f\Vert_{L^q}  e^{-C\int_0^t \Vert\textnormal{div}v_{\varepsilon}(\tau)\Vert_{L^\infty}d\tau}\lesssim\Vert f\circ \psi_\varepsilon^{\pm 1}\Vert_{L^q} \lesssim \Vert f\Vert_{L^q} e^{C\int_0^t \Vert\textnormal{div}v_{\varepsilon}(\tau)\Vert_{L^\infty}d\tau}.
\end{eqnarray*}
Taking into consideration the estimate (\ref{eqac1}), the passage to the limit in the last inequality gives the identity (\ref{pm}). To finish the proof we use a density argument.

 With this flow $\psi$ in hand we construct $(v,\omega)$ via \eqref{Eul4} and we shall prove some strong convergence results which give in turn that $(v,\omega)$ is  a solution of the incompressible Euler equations. Recall that $\omega_0$ and $(\omega_\varepsilon)_\varepsilon$ belong to $L^q$ for all $q\in[p,+\infty[$ according to the classical interpolation result between Lebesgue and $BMO$ spaces, see  \eqref{inter}.
Then, we shall prove following   convergence result, 
$$
\lim_{\varepsilon\rightarrow 0}\Vert(\omega_{\varepsilon}-\omega)(t)\Vert_{L^q}\quad \forall q\in[p,+\infty[,\quad  \forall t\in [0,T].
$$ 
For this aim we write
\begin{eqnarray*}
\Vert(\omega-\omega_\varepsilon)(t)\Vert_{L^{q}}&=&\Vert\omega_0\circ\psi^{-1}(t)-\omega_{0,\varepsilon}\circ\psi^{-1}_\varepsilon(t)e^{-\int_0^t \textnormal{div}v_{\varepsilon}(\tau,\psi(\tau,\psi_{\varepsilon}^{-1}(t)))d\tau}\Vert_{L^q}\\ &\leq & \hbox{I}_\varepsilon+\hbox{II}_\varepsilon. 
\end{eqnarray*}
Where
$$
\hbox{I}_\varepsilon\triangleq\Vert\omega_0\circ\psi^{-1}(t)-\omega_{0}\circ\psi^{-1}_\varepsilon(t)\Vert_{L^q},
$$
and
$$
\hbox{II}_\varepsilon\triangleq\Vert\omega_0\circ\psi_{\epsilon}^{-1}(t)-\omega^{\epsilon}_{0}\circ\psi^{-1}_\varepsilon(t)e^{-\int_0^t \textnormal{div}v_{\varepsilon}(\tau,\psi(\tau,\psi_{\varepsilon}^{-1}(t)))d\tau}\Vert_{L^q}.
$$

In view of  the equality (\ref{lq}), we can confirm that
  $$
  \lim_{\varepsilon\rightarrow 0}\hbox{I}_\varepsilon=0.
  $$

To estimate $\hbox{II}_\varepsilon$ we make a change of variable and we use Lemma \ref{lem5} to get
\begin{eqnarray*}
\hbox{II}_\varepsilon &\leq &e^{C \Vert\textnormal{div}v_{\varepsilon}\Vert_{L^1_tL^\infty}}\Vert \omega_0-\omega_{0,\varepsilon}e^{-\int_0^t \textnormal{div}v_{\varepsilon}(\tau,\psi(\tau))d\tau}\Vert_{L^{q}}\\ &\leq &e^{C \Vert\textnormal{div}v_{\varepsilon}\Vert_{L^1_tL^\infty}}\Big(\Vert \omega_0\big(1-e^{-\int_0^t \textnormal{div}v_{\varepsilon}(\tau,\psi(\tau))d\tau}\big)\Vert_{L^{q}}+ \Vert \big(\omega_0-\omega_{0,\varepsilon}\big)e^{-\int_0^t \textnormal{div}v_{\varepsilon}(\tau,\psi(\tau))d\tau}\Vert_{L^{q}}\Big)\\ &\leq &e^{C \Vert\textnormal{div}v_{\varepsilon}\Vert_{L^1_tL^\infty}}\Big(\Vert \omega_0\Vert_{L^q} \Vert1-e^{-\int_0^t \textnormal{div}v_{\varepsilon}(\tau,\psi(\tau))d\tau}\Vert_{L^\infty} + \Vert \omega_0-\omega_{0,\varepsilon}\Vert_{L^q}\Big)\\&\leq &e^{C \Vert\textnormal{div}v_{\varepsilon}\Vert_{L^1_tL^\infty}}\Big(\Vert \omega_0\Vert_{L^q}\int_0^t \Vert\textnormal{div}v_\varepsilon(\tau)\Vert_{L^\infty}d\tau + \Vert \omega_0-\omega_{0,\varepsilon}\Vert_{L^q}\Big).\end{eqnarray*}
Where we have used in the last inequality the estimate
$$
\Vert e^{u}-1\Vert_{L^\infty}\leq \Vert u\Vert_{L^\infty}e^{\Vert u\Vert_{L^\infty}}.
$$
Then, from (\ref{eqac}) and   \eqref{inter} we find 
\begin{eqnarray*}
\hbox{II}_\varepsilon &\lesssim &C_0\varepsilon^{\eta}+\Vert \omega_0-\omega_{0,\varepsilon}\Vert_{L^q}\\ &\lesssim &C_0\varepsilon^{\eta}+\Vert \omega_0-\omega_{0,\varepsilon}\Vert_{L^p}^\frac{p}{q}\Vert \omega_0-\omega_{0,\varepsilon}\Vert_{BMO}^{1-\frac{p}{q}}\\ &\lesssim & C_0\varepsilon^{\eta}+C_0\Vert \omega_0-\omega_{0,\varepsilon}\Vert_{L^p}^\frac{p}{q}.
\end{eqnarray*}
Passing to the limit in the last estimate gives the desired result. Now we shall translate these results to the velocities via Biot-Savart law: we get since  
  $1<p<2$ ,
\begin{equation}\label{pv1}
\Vert \big( \mathbb{P} v_\varepsilon-v\big)(t)\Vert_{L^{\infty}}\lesssim \Vert\big(\omega_{\varepsilon}-\omega\big)(t)\Vert_{L^{p}\cap L^{2p}}.
\end{equation} 
Furthermore, by the  classical Hardy-Littlewood-Sobolev inequality,  one has
\begin{equation}\label{pv2}
\Vert \big( \mathbb{P} v_\varepsilon-v\big)(t)\Vert_{L^{r}}\lesssim \Vert\big(\omega_{\varepsilon}-\omega\big)(t)\Vert_{L^{q}}.
\end{equation}
where 
$r\in [\frac{2p}{2-p},+\infty[$ and $q\in[p,+\infty[$. 
Moreover, in view of the  Calder\'on-Zygmund inequality (\ref{c-z})  we have
$$
\Vert\nabla \big( \mathbb{P} v_\varepsilon-v\big)(t)\Vert_{L^{q}}\lesssim \Vert\big(\omega_{\varepsilon}-\omega\big)(t)\Vert_{L^{q}}\quad \forall q\in [p,+\infty[.
$$
Then, the convergence of $(\mathbb{P} v_\varepsilon)$ to $v$ holds true in $W^{1,r}$ for all $r\in[\frac{2p}{2-p},+\infty[$.\\

\quad It remains to show  that $\omega$ is  a solution of \eqref{vo} associated to the initial vorticity $\omega_0.$  But before doing it, we have to verify that $\psi$ is the flow  associated to $v$. Using the preceding convergence and  the uniform convergence of $\{\psi_\varepsilon\}$ and according to the  estimate (\ref{eqac1}), the passage to the limit in the integral equation of the flow, $$
\psi_\varepsilon(t,x)=x+\int_0^t\mathbb{P}v_\varepsilon(\tau,\psi_\varepsilon(\tau,x))d\tau+\int_0^t\mathbb{Q}v_\varepsilon(\tau,\psi_\varepsilon(\tau,x))d\tau,
$$
yields
$$
\psi(t,x)=x+\int_0^t v(\tau,\psi(\tau,x))d\tau.
$$ 
As $v\in LL$ then by the uniqueness  of the flow  associated to $v$ we can confirm the assumption.\\
Next, let $\phi$ be an element of $\mathcal{D}(\RR_+\times\RR^2)$. By definition of $\omega$ and using a change of variable we have
\begin{eqnarray*}
\int_0^\infty \int_{\RR^2}\omega(t,x)\partial_t\phi(t,x)dx dt &=& \int_0^\infty \int_{\RR^2}\omega_0(x)(\partial_t\phi)(t,\psi(t,x))dx dt.
\end{eqnarray*}
But
\begin{eqnarray*}
(\partial_t\phi)(t,\psi(t,x))&=&\partial_t\big(\phi(t,\psi(t,x))\big)-\partial_t\psi(t,x)\cdot\nabla\phi(t,\psi(t,x))\\ &=& \partial_t\big(\phi(t,\psi(t,x))\big)-(v\cdot\nabla\phi)(t,\psi(t,x)).
\end{eqnarray*}
Thus,
\begin{eqnarray*}
\int_0^\infty \int_{\RR^2}\omega(t,x)\partial_t\phi(t,x)dx dt &=&  -\int_{\RR^2}\omega_0(x)\phi(0,x)dx- \int_0^\infty \int_{\RR^2}\omega_0(x)(v\cdot\nabla\phi)(t,\psi(t,x))dx dt\\&=&  -\int_{\RR^2}\omega_0(x)\phi(0,x)dx- \int_0^\infty \int_{\RR^2}\omega(t,x)(v\cdot\nabla\phi)(t,x)dx dt.
\end{eqnarray*}
Hence, $\omega$ verifies the velocity-vorticity weak formulation :
$$
\int_0^\infty \int_{\RR^2}\omega(t,x)(\partial_t\phi+v\cdot\nabla\phi)(t,x)dx dt+\int_{\RR^2}\omega_0(x)\phi(0,x)dx=0.
$$
Moreover, from ($iv$) of Proposition \ref{proptop} and the estimates (\ref{glob}), (\ref{loc}) we immediately deduce (\ref{glob1}) and  (\ref{loc1}). Finally, the uniqueness of the limit can be concluded by uniqueness of the solution of the incompressible Euler system since the velocity $v$ belongs to $L^1_T LL$.
\end{proof}
\section{Appendix}
\begin{lemma}\label{ap}
Let $B$ be a ball of center $0$ and radius $r>0$ and $\psi$ be the flow associated to a given  smooth vector field $v$. Consider  a Whitney covering of the open connected set $\psi(t,B)$ that is a collection of countable open balls $(O_k)_k$ introduced in the proof of Theorem $\ref{th1}$.\\
For all $k\in\NN$ we set
 $$
 U_k\triangleq \sum_{ e^{-k-1}h(r)<r_j\leq e^{-k}h(r)}\vert O_j\vert,
 $$
and
 $$
 V_k\triangleq \sum_{ e^{-k-1}<4r_j\leq e^{-k}}\vert O_j\vert,
 $$
with $h(r)\triangleq r\max\{1,\Vert J_{\psi}\Vert_{L^{\infty}}\}$ and $ J_{\psi}$ is the Jacobian of $\psi$.\\
Then, there exists an absolute constants $C$ such that for all $k\geq \beta(t)$, we have\\
If $r\lesssim e^{-\beta(t)}\min\big\{1,\frac{1}{\Vert J_{\psi}\Vert_{L^{\infty}}}\big\} $, then
 \begin{equation}\label{eq3}
 U_k\leq C\big(1+\Vert J_{\psi}\Vert_{L^{\infty}})^2e^{-\frac{k}{\beta(t)}}r^{1+\frac{1}{\beta(t)}},
 \end{equation}
 For all $r\in \RR_+^*$ ,
 \begin{equation}\label{eq4}
 V_k\leq C\Vert J_{\psi}\Vert_{L^{\infty}}e^{-\frac{k}{\beta(t)}}r.
 \end{equation}
 Where $\beta(t)=\exp\Big(\int_0^t\Vert v(\tau)\Vert_{LL}d\tau$\Big). 
 \end{lemma}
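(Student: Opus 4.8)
The plan is to control both $U_k$ and $V_k$ by the Lebesgue measure of a thin boundary layer of the open set $\psi(t,B)$, and then to transport this estimate back to the ball $B$ through the change of variables $y=\psi(t,x)$, where the H\"older regularity of the inverse flow supplied by Lemma \ref{p1} forces the relevant layer to be a narrow annulus around $\partial B$.

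First I would record the geometric reduction. For a fixed $k$, every Whitney ball $O_j$ entering $U_k$ has radius $r_j\approx e^{-k}h(r)$, and the Whitney property $r_j\approx d(O_j,\psi(t,B)^c)$ shows that each point $y\in O_j$ satisfies $d(y,\psi(t,B)^c)\approx e^{-k}h(r)$. Hence all such $O_j$ sit inside the single shell
$$
S_\delta\triangleq\big\{y\in\psi(t,B):\ d(y,\psi(t,B)^c)\leq \delta\big\},\qquad \delta\approx e^{-k}h(r),
$$
and, being pairwise disjoint and contained in $\psi(t,B)$, they yield $U_k\leq|S_\delta|$. The same reasoning with $\delta\approx e^{-k}$ gives $V_k\leq|S_\delta|$. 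Thus everything reduces to estimating $|S_\delta|$.

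To bound $|S_\delta|$ I would change variables. Writing $x=\psi^{-1}(t,y)$ and invoking Lemma \ref{lem5},
$$
|S_\delta|=\int_{\psi^{-1}(t,S_\delta)}|J_{\psi}(x)|\,dx\leq \Vert J_{\psi}\Vert_{L^\infty}\,\big|\psi^{-1}(t,S_\delta)\big|.
$$
It remains to locate the preimage. Since $\psi(t,\cdot)$ is a homeomorphism, $\partial\psi(t,B)=\psi(t,\partial B)$, so for $y\in S_\delta$ there is $z\in\psi(t,\partial B)$ with $|y-z|\leq\delta$, say $z=\psi(t,w)$ and $|w|=r$. Provided $\delta<e^{-\beta(t)}$, Lemma \ref{p1} applied to $\psi^{-1}$ gives $|\psi^{-1}(t,y)-w|\leq e\,\delta^{1/\beta(t)}$, so $x=\psi^{-1}(t,y)$ lies in the annulus $\{r-e\delta^{1/\beta(t)}\leq|x|\leq r\}$, whose area is at most $2\pi e\,r\,\delta^{1/\beta(t)}$ (and in any case at most $|B|$). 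This produces
$$
|S_\delta|\lesssim \Vert J_{\psi}\Vert_{L^\infty}\,r\,\delta^{1/\beta(t)}.
$$

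Finally I would insert the two choices of $\delta$. The H\"older step requires $\delta<e^{-\beta(t)}$: for $V_k$, where $\delta\approx e^{-k}$, this is precisely what the restriction $k\geq\beta(t)$ provides, whereas for $U_k$ it follows from the smallness hypothesis on $r$, since $h(r)=r\max\{1,\Vert J_{\psi}\Vert_{L^\infty}\}\lesssim e^{-\beta(t)}$ owing to the identity $\min\{1,M^{-1}\}\max\{1,M\}=1$ with $M=\Vert J_{\psi}\Vert_{L^\infty}$. For $V_k$ the bound above gives at once $V_k\lesssim\Vert J_{\psi}\Vert_{L^\infty}\,e^{-k/\beta(t)}r$. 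For $U_k$, writing $h(r)^{1/\beta(t)}=r^{1/\beta(t)}\max\{1,\Vert J_{\psi}\Vert_{L^\infty}\}^{1/\beta(t)}$ and using $\beta(t)\geq1$ to bound the last factor by $1+\Vert J_{\psi}\Vert_{L^\infty}$, I obtain $U_k\lesssim(1+\Vert J_{\psi}\Vert_{L^\infty})^2 e^{-k/\beta(t)}r^{1+1/\beta(t)}$, as claimed. The only delicate point is the geometric reduction to a single boundary shell, which rests on combining the Whitney property with the disjointness of the $O_j$; once that is in place, the Jacobian change of variables and the H\"older control of $\psi^{-1}$ are routine.
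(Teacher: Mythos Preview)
Your argument is correct and follows essentially the same route as the paper's proof: bound $U_k$ (resp.\ $V_k$) by the measure of a boundary shell of $\psi(t,B)$ of thickness $\approx e^{-k}h(r)$ (resp.\ $\approx e^{-k}$), pull this back to $B$ via the Jacobian estimate, and then use the H\"older regularity of $\psi^{-1}$ from Lemma~\ref{p1} together with $\partial\psi(t,B)=\psi(t,\partial B)$ to land in a thin annulus around $\partial B$. Your treatment of the constant $(1+\Vert J_\psi\Vert_{L^\infty})^2$ and of the two smallness conditions on $\delta$ (coming from $k\geq\beta(t)$ for $V_k$ and from the hypothesis on $r$ for $U_k$) matches the paper's reasoning.
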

 \textbf{Proof }
By the definition of $U_k$, we have
$$
U_k\leq\Big\vert \Big\{y\in\psi(B) : d(y,\psi(B)^c)\leq Ce^{-k}r\max\{1,\Vert J_{\psi}\Vert_{L^{\infty}}\}\Big\}\Big\vert 
$$
 Since $\vert \psi (A)\vert \leq \vert A\vert \Vert J_{\psi}\Vert_{L^{\infty}}$ for any measurable set $A\subset\RR^2$, we can deduce that
\begin{equation}\label{uk1}
U_k\leq\Big\vert\Big\{ x\in B : d(\psi(x),\psi(B^c))\leq Ce^{-k}r\max\{1,\Vert J_{\psi}\Vert_{L^{\infty}}\}\Big\}\Big\vert \Vert J_{\psi}\Vert_{L^{\infty}}.
\end{equation}
We set
$$
D_k=\Big\{ x\in B : d(\psi(x),\psi(B^c))\leq Ce^{-k}r\max\{1,\Vert J_{\psi}\Vert_{L^{\infty}}\}\Big\}.
$$
According to the fact $d(\psi(x),\psi(B^c))=d(\psi(x),\partial\psi(B))$ and $\partial\psi(B)=\psi(\partial B)$, we can write 
$$
D_k\subset \Big\{x\in B : \exists y\in \partial B : \vert \psi(x)-\psi(y)\vert \leq  Ce^{-k}r\max\{1,\Vert J_{\psi}\Vert_{L^{\infty}}\}\Big\}.
$$
As $r\lesssim e^{-\beta(t)}\min\big\{1,\frac{1}{\Vert J_{\psi}\Vert_{L^{\infty}}}\big\}$, then
$$
e^{-k}r\max\{1,\Vert J_{\psi}\Vert_{L^{\infty}}\}\lesssim e^{-\beta(t)}.
$$
Then, Lemma \ref{p1} applied with $\psi^{-1}$ gives
\begin{eqnarray*}
 D_k &\subset & \Big\{x\in B :\exists y\in\partial B :\vert x-y\vert \leq C e^{1-\frac{k}{\beta(t)}}r^{\frac{1}{\beta(t)}}(1+\Vert J_{\psi}\Vert_{L^{\infty}})\Big\}.
\end{eqnarray*}   
Therefore,
$$
D_k\subset A=\Big\{x\in B : d(x,\partial B) :\vert x-y\vert \leq C e^{1-\frac{k}{\beta(t)}}r^{\frac{1}{\beta(t)}}(1+\Vert J_{\psi}\Vert_{L^{\infty}})\Big\}.
$$
Inserting this into (\ref{uk1}) gives
 $$
 U_k \leq \Vert J_{\psi}\Vert_{L^{\infty}} \vert D_k\vert \lesssim \big( 1+\Vert J_{\psi}\Vert_{L^{\infty}}\big)^2e^{\frac{-k}{\beta(t)}}r^{1+\frac{1}{\beta(t)}}\, 
 $$
 as claimed. 
Reproducing the same procedure as previously with replacing $Ce^{-k}r\max\{1,\Vert J_{\psi}\Vert_{L^{\infty}}\}$ by $Ce^{-k}$ in \eqref{uk1} and considering the fact that $c_0e^{-k}\lesssim e^{-\beta(t)}$, we get the estimation of $V_k$.
\begin{lemma}
Let $(v_\varepsilon, c_\varepsilon)$ be a smooth solution of the compressible Euler system \eqref{EC} and $\omega_\varepsilon$ be  the  vorticity of $v_\varepsilon$. Then there exists a positive constant $C$ such that

 $$
 \Vert \nabla v_\varepsilon(t)\Vert_{L^\infty}\leq C \Big(\Vert \omega_\varepsilon(t)\Vert_{BMO\cap L^p}V_\varepsilon(t)+\Vert (v_{0,\varepsilon},c_{0,\varepsilon})\Vert_{H^{s+2}}\Big),
 $$
with
$$
V_\varepsilon (t)=\int_0^t\big(\|\nabla v_\varepsilon(\tau)\|_{L^\infty}+\|\nabla c_\varepsilon(\tau)\|_{L^\infty}\big) d\tau
$$
 \end{lemma}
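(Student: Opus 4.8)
The plan is to prove a \emph{logarithmic interpolation inequality} for $\nabla v_\varepsilon$ and then feed in the energy bound of Proposition~\ref{enr}. First I would perform a Littlewood--Paley decomposition and cut at a frequency level $N\in\NN$ to be fixed at the end:
\begin{equation*}
\|\nabla v_\varepsilon\|_{L^\infty}\le \|\Delta_{-1}\nabla v_\varepsilon\|_{L^\infty}+\sum_{0\le q<N}\|\Delta_q\nabla v_\varepsilon\|_{L^\infty}+\sum_{q\ge N}\|\Delta_q\nabla v_\varepsilon\|_{L^\infty}.
\end{equation*}
The lowest block is estimated by Bernstein's inequality together with the Calder\'on--Zygmund bound \eqref{c-z}, giving a contribution $\lesssim\|\omega_\varepsilon\|_{L^p}+\|\textnormal{div}\, v_\varepsilon\|_{L^p}$; the intermediate blocks are bounded crudely by $N\,\|\nabla v_\varepsilon\|_{B^0_{\infty,\infty}}$; and the high blocks are summed after passing through $L^2$ by Bernstein, producing $\lesssim 2^{-Ns}\|v_\varepsilon\|_{H^{s+2}}$ (here $s>0$ is what makes the geometric tail converge). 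Choosing $N$ so that the intermediate and high contributions balance turns the cutoff into a logarithm, yielding
\begin{equation*}
\|\nabla v_\varepsilon\|_{L^\infty}\lesssim \|\omega_\varepsilon\|_{L^p}+\|\textnormal{div}\, v_\varepsilon\|_{L^p}+\|\nabla v_\varepsilon\|_{B^0_{\infty,\infty}}\,\log\!\big(e+\|v_\varepsilon\|_{H^{s+2}}\big).
\end{equation*}

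Next I would control the Besov factor exactly as in Lemma~\ref{22}: splitting $v_\varepsilon=\mathbb{P}v_\varepsilon+\mathbb{Q}v_\varepsilon$, the embedding $\BMO\hookrightarrow B^0_{\infty,\infty}$ and \eqref{c-z} give $\|\nabla\mathbb{P}v_\varepsilon\|_{B^0_{\infty,\infty}}\lesssim\|\omega_\varepsilon\|_{\BMO\cap L^p}$, while the compressible part obeys $\|\nabla\mathbb{Q}v_\varepsilon\|_{B^0_{\infty,\infty}}\lesssim\|\textnormal{div}\, v_\varepsilon\|_{B^0_{\infty,\infty}}+\|\mathbb{Q}v_\varepsilon\|_{L^\infty}$. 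I would then insert the energy estimate $\|v_\varepsilon(t)\|_{H^{s+2}}\le C\|(v_{0,\varepsilon},c_{0,\varepsilon})\|_{H^{s+2}}e^{CV_\varepsilon(t)}$ of Proposition~\ref{enr}, so that
\begin{equation*}
\log\!\big(e+\|v_\varepsilon(t)\|_{H^{s+2}}\big)\lesssim \log\!\big(e+\|(v_{0,\varepsilon},c_{0,\varepsilon})\|_{H^{s+2}}\big)+V_\varepsilon(t),
\end{equation*}
which is the step that converts the a priori exponential growth in $H^{s+2}$ into the harmless linear factor $V_\varepsilon(t)$ inside the logarithm.

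Finally I would assemble the pieces. Multiplying the Besov bound by the logarithm isolates the leading term $\|\omega_\varepsilon\|_{\BMO\cap L^p}\,V_\varepsilon(t)$, which is the growing contribution of the incompressible part. All the remaining quantities — the low-frequency $L^p$ norms, the compressible terms $\|\textnormal{div}\, v_\varepsilon\|_{B^0_{\infty,\infty}}$ and $\|\mathbb{Q}v_\varepsilon\|_{L^\infty}$, and the baseline factor $\|\omega_\varepsilon\|_{\BMO\cap L^p}\log(e+\|(v_{0,\varepsilon},c_{0,\varepsilon})\|_{H^{s+2}})$ — I would bound, through elementary (Young-type) inequalities and Sobolev embedding, by $\|(v_{0,\varepsilon},c_{0,\varepsilon})\|_{H^{s+2}}$, giving the stated estimate. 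I expect the genuine difficulty to lie precisely in this last bookkeeping: since the compressible part of $v_\varepsilon$ is \emph{not} slaved to the vorticity, it has to be absorbed through the Sobolev norm, and the dependence on $\|(v_{0,\varepsilon},c_{0,\varepsilon})\|_{H^{s+2}}$ must be kept \emph{linear} rather than exponential — a naive use of the energy inequality on the full velocity would reintroduce $e^{CV_\varepsilon}$ and destroy the Gronwall argument for the lifespan bound in \eqref{v}.
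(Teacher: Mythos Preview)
Your approach is correct and follows essentially the same Littlewood--Paley splitting as the paper, but the paper chooses the cutoff $N$ more directly and thereby avoids the bookkeeping you flag at the end. Instead of first optimizing $N$ to obtain a logarithmic interpolation inequality and then expanding $\log(e+\|v_\varepsilon\|_{H^{s+2}})$ via the energy estimate, the paper reaches the intermediate form
\[
\|\nabla v_\varepsilon(t)\|_{L^\infty}\lesssim N\,\|\omega_\varepsilon(t)\|_{BMO\cap L^p}+2^{-Ns}\,\|(v_{0,\varepsilon},c_{0,\varepsilon})\|_{H^{s+2}}\,e^{CV_\varepsilon(t)}
\]
and simply sets $2^{Ns}\simeq e^{CV_\varepsilon(t)}$, i.e.\ $N\simeq C\,V_\varepsilon(t)/s$. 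This kills the exponential factor outright, so the high-frequency tail produces the bare term $\|(v_{0,\varepsilon},c_{0,\varepsilon})\|_{H^{s+2}}$ and the intermediate sum gives $V_\varepsilon(t)\,\|\omega_\varepsilon\|_{BMO\cap L^p}$, with no cross terms left to absorb. The paper also does not pass through the $\mathbb{P}/\mathbb{Q}$ decomposition for the intermediate frequencies; it uses $\|\Delta_q\nabla v_\varepsilon\|_{L^\infty}\lesssim\|\Delta_q\omega_\varepsilon\|_{L^\infty}$ directly (the divergence contribution being implicitly absorbed into the $H^{s+2}$ tail). Your route through the log is equivalent but longer; the ``Young-type'' cleanup you anticipate is genuinely unnecessary with the paper's choice of $N$.
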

  \begin{proof}
According to Bernstein ineguality and the fact that  $\Vert\dot{\Delta v_\varepsilon}_q\Vert_{L^\infty}\sim 2^{-q}\Vert \dot{\Delta}_q \omega\Vert_{L^\infty}$, we have
 \begin{eqnarray*}
 \Vert \nabla v_\varepsilon\Vert_{L^\infty}&\leq & \Vert \Delta_{-1}\nabla v_\varepsilon\Vert_{L^\infty}+\sum_{0\leq q\leq N} \Vert \Delta_q\nabla v_\varepsilon\Vert_{L^\infty}+\sum_{q\geq N} \Vert \Delta_q\nabla v_\varepsilon\Vert_{L^\infty}\\ & \lesssim & \Vert \Delta_{-1}\nabla v_\varepsilon\Vert_{L^p}+\sum_{0\leq q\leq N} \Vert \Delta_q\omega_\varepsilon\Vert_{L^\infty}+\sum_{q\geq N} 2^q\Vert \Delta_q v_\varepsilon\Vert_{L^\infty}\\ & \lesssim & \Vert \omega_\varepsilon\Vert_{L^p}+N\Vert \omega_\varepsilon\Vert_{B_{\infty,\infty}^{0}}+\Vert v_\varepsilon\Vert_{B_{\infty,\infty}^{s+1}}\sum_{q\geq N} 2^{-qs}\\ & \lesssim & N\Vert \omega_\varepsilon\Vert_{\BMOF\cap L^p}+2^{-Ns}\Vert v_\varepsilon\Vert_{H^{s+2}}.
 \end{eqnarray*}
 where we have used in the last inequality the fact that $H^{s+2}\hookrightarrow B_{\infty,\infty}^{s+1}$. 
Then from the energy estimates we deduce that
 $$
 \Vert \nabla v_\varepsilon(t)\Vert_{L^\infty}\lesssim N\Vert \omega_\varepsilon(t)\Vert_{\BMOF\cap L^p}+2^{-Ns}\Vert (v_{0,\varepsilon},c_{0,\varepsilon})\Vert_{H^{s+2}} e^{CV_\varepsilon(t)}.
 $$
Choosing $N$ such that $2^{Ns}\simeq e^{CV_\varepsilon(t)}$, gives the desired result.\\
\end{proof}
 
\end{document}